\newtheorem{theorem}{Theorem}[section]
\newtheorem{lemma}[theorem]{Lemma}
\newtheorem{proposition}[theorem]{Proposition}
\newcommand{\be}{\begin{equation}}
\newcommand{\ee}{\end{equation}}
\newcommand{\ba}{\begin{array}}
\newcommand{\ea}{\end{array}}
\newcommand{\bpm}{\begin{pmatrix}}
\newcommand{\epm}{\end{pmatrix}}
\newcommand{\bea}{\begin{eqnarray}}
\newcommand{\eea}{\end{eqnarray}}
\newcommand{\beaa}{\begin{eqnarray*}}
\newcommand{\eeaa}{\end{eqnarray*}}
\newcommand{\bal}{\begin{align}}
\newcommand{\eal}{\end{align}}
\newcommand{\baln}{\begin{align*}}
\newcommand{\ealn}{\end{align*}}
\newcommand{\Xcal}{\mathcal{X}}
\newcommand{\Ycal}{\mathcal{Y}}
\newcommand{\CL}{\mathcal L}
\newcommand{\vb}{b}
\newcommand{\vr}{r}
\newcommand{\vu}{u}
\newcommand{\vv}{v}
\newcommand{\vw}{w}
\newcommand{\vx}{x}
\newcommand{\vy}{y}
\newcommand{\vz}{z}
\newcommand{\vlam}{\lambda}
\newcommand{\vdelta}{\delta}
\newcommand{\vA}{A}
\newcommand{\vB}{B}
\newcommand{\vD}{D}
\newcommand{\vG}{G}
\newcommand{\vI}{I}
\newcommand{\vP}{P}
\newcommand{\vQ}{Q}
\newcommand{\vU}{U}
\newcommand{\vW}{W}
\newcommand{\cB}{{\mathcal{B}}}
\newcommand{\cS}{{\mathcal{S}}}
\newcommand{\cX}{{\mathcal{X}}}
\newcommand{\cY}{{\mathcal{Y}}}
\newcommand{\cZ}{{\mathcal{Z}}}
\newcommand{\EE}{\mathbb{E}} 
\newcommand{\RR}{\mathbb{R}} 
\newcommand{\vzero}{0} 
\newcommand{\Prob}{{\mathrm{Prob}}} 
\newcommand{\st}{\mbox{ s.t. }}
\DeclareMathOperator*{\argmin}{arg\,min} 
\newcommand{\bc}{\begin{center}}
\newcommand{\ec}{\end{center}}
\newcommand{\bdm}{\begin{displaymath}}
\newcommand{\edm}{\end{displaymath}}
\newcommand{\beq}{\begin{equation}}
\newcommand{\eeq}{\end{equation}}
\newcommand{\bfl}{\begin{flushleft}}
\newcommand{\efl}{\end{flushleft}}
\newcommand{\bt}{\begin{tabbing}}
\newcommand{\et}{\end{tabbing}}
\newcommand{\beqn}{\begin{eqnarray}}
\newcommand{\eeqn}{\end{eqnarray}}
\newcommand{\beqs}{\begin{align*}} 
\newcommand{\eeqs}{\end{align*}}  
\newtheorem{remark}{Remark}[section]
\newtheorem{assumption}{Assumption}
\begin{document}

\title{Randomized Primal-Dual Proximal Block Coordinate Updates}

\author{Xiang Gao\thanks{\{gaoxx460, zhangs\}@umn.edu. Department of Industrial \& Systems Engineering, University of Minnesota} \and
Yangyang Xu\thanks{yangyang.xu@ua.edu. Department of Mathematics, University of Alabama}
\and Shuzhong Zhang\footnotemark[1]
}

\date{\today}

\maketitle

\begin{abstract}


In this paper we propose a randomized primal-dual proximal block coordinate updating 
framework for a general multi-block convex optimization model with coupled objective function and linear constraints. Assuming mere convexity, we establish its $O(1/t)$ convergence rate in terms of the objective value and feasibility measure.
The framework includes several existing algorithms as special cases such as a primal-dual method for bilinear saddle-point problems (PD-S),
the proximal Jacobian ADMM (Prox-JADMM) and a randomized variant of the ADMM method for multi-block convex optimization.
Our analysis recovers and/or strengthens the convergence properties of several existing algorithms. For example, for PD-S our result leads to the same order of convergence rate without the previously assumed boundedness condition on the constraint sets, and for Prox-JADMM 
the new result provides convergence rate in terms of the objective value and the feasibility violation.
It is well known that the original ADMM may fail to converge when the number of blocks exceeds two. Our result shows that if an appropriate randomization procedure is invoked to select the updating blocks, then a sublinear rate of convergence in expectation can be guaranteed for multi-block ADMM, without assuming any strong convexity. 
The new approach is also extended to solve problems where only a stochastic approximation of the (sub-)gradient of the objective is available, and we establish an $O(1/\sqrt{t})$ convergence rate of the extended approach for solving stochastic programming.

\vspace{1.5cm}

\noindent {\bf Keywords:} primal-dual method, alternating direction method of multipliers (ADMM), randomized algorithm, iteration complexity, first-order stochastic approximation.

\vspace{1cm}

\noindent {\bf Mathematics Subject Classification:} 90C25, 95C06, 68W20.

\end{abstract}

\newpage

\section{Introduction}

In this paper, we consider the following multi-block structured convex optimization model
\begin{equation}\label{eq:main}
\begin{aligned}
\min_{\vx,\vy}\;\; & f(\vx_1,\cdots,\vx_N)+\sum_{i=1}^N u_i(\vx_i) + g(\vy_1,\cdots,\vy_M)+\sum_{j=1}^M v_j(\vy_j) \\
\st & \sum_{i=1}^N \vA_{i}\vx_i +\sum_{j=1}^M \vB_j\vy_j = \vb\\
& \vx_i\in \Xcal_i,\, i=1,\ldots,N;\ \vy_j\in \Ycal_j,\, j=1,\ldots,M,
\end{aligned}
\end{equation}
where the variables $\vx=(\vx_1;\cdots;\vx_N)$ and $\vy=(\vy_1;\cdots;\vy_M)$ are naturally partitioned into $N$ and $M$ blocks respectively, $\vA=(\vA_1,\cdots,\vA_{N})$ and $\vB=(\vB_1,\cdots,\vB_M)$ are block matrices, $\Xcal_i$'s and $\Ycal_j$'s are some closed convex sets, $f$ and $g$ are smooth convex functions, and $u_i$'s and $v_j$'s are proper closed convex (possibly nonsmooth) functions.

\subsection{Motivating examples}

Optimization problems in the form of \eqref{eq:main} have many emerging applications from various fields. For example, the constrained lasso (classo) problem that was first studied by James \emph{et al.} \cite{james2012constrained} as a generalization of the lasso problem, can be formulated as
\begin{eqnarray}\label{prob:classo}
\begin{array}{ll}
\min\limits_{\vx} & \frac{1}{2}\|A x-b\|_2^2+\tau \|\vx\|_1 \\
\st  & C\vx\leq d,  \\
\end{array}
\end{eqnarray}
where $A\in\RR^{m\times p}$, $b\in\RR^m$ are the observed data, and $C\in\RR^{n\times p}$, $d\in\RR^n$ are the predefined data matrix and vector. Many widely used statistical models can be viewed as special cases of \eqref{prob:classo}, including the monotone curve estimation, fused lasso, generalized lasso, and so on \cite{james2012constrained}. By partitioning the variable $\vx$ into blocks as $\vx=(\vx_1;\cdots;\vx_K)$ where $\vx_i\in\RR^{p_i}$ as well as other matrices and vectors in \eqref{prob:classo} correspondingly, and introducing another slack variable $y$, the classo problem can be transformed to
\begin{eqnarray}\label{prob:classo1}
\begin{array}{ll}
\min\limits_{\vx,\vy} & \frac{1}{2}\left\|\sum\limits_{i=1}^K A_i\vx_i-b\right\|_2^2+\tau \sum\limits_{i=1}^K\|\vx_i\|_1 \\
\st  & \sum\limits_{i=1}^{K}C_i\vx_i+y=d, \,\, y\geq0,  \\
\end{array}
\end{eqnarray}
which is in the form of \eqref{eq:main}.

Another interesting example is the extended linear-quadratic programming \cite{rockafellar1991ext-quadprog} that can be formulated as
\begin{eqnarray}\label{prob:ext-quadprog}
\begin{array}{ll}
\min\limits_x &\frac{1}{2} x^\top P x+a^\top x+\max\limits_{s\in\mathcal{S}}\left\{(d-Cx)^\top s-\frac{1}{2}s^\top Qs\right\},\\
\st & Ax\le b,
\end{array}
\end{eqnarray}
where $P$ and $Q$ are symmetric positive semidefinite matrices, and $\mathcal{S}$ is a polyhedral set. Apparently, \eqref{prob:ext-quadprog} includes quadratic programming as a special case. In general, its objective is a piece-wise linear-quadratic convex function.
Let $g(s)=\frac{1}{2}s^\top Qs+\iota_{\cS}(s)$, where $\iota_{\mathcal{S}}$ denotes the indicator function of $\cS$. Then
$$\max\limits_{s\in \cS}\left\{(d-Cx)^\top s-\frac{1}{2}s^\top Qs\right\}=g^*(d-Cx),$$
where $g^*$ denotes the convex conjugate of $g$. Replacing $d-Cx$ by $y$ and introducing slack variable $z$, we can equivalently write \eqref{prob:ext-quadprog}  into the form of \eqref{eq:main}:
\begin{eqnarray}\label{prob:ext-quadprog1}
\begin{array}{ll}
\min\limits_{x,y,z} &\frac{1}{2} x^\top P x+a^\top x+g^*(y),\\
\st & Ax+z= b,\ z\ge 0,\ Cx+y=d,
\end{array}
\end{eqnarray}
for which one can further partition the $x$-variable into a number of disjoint blocks.

Many other interesting applications in various areas can be formulated as optimization problems in the form of~\eqref{eq:main}, including those arising from signal processing, image processing, machine learning and statistical learning; see \cite{hong2014block,cui2015convergence,chen2015convergence,gao2015first} and the references therein.

Finally, we mention that computing a point on the central path for a generic convex programming in block variables $(x_1;\cdots;x_N)$:
\[
\begin{array}{ll}
\min\limits_x & f(x_1,\cdots,x_N)  \\
\st & \sum_{i=1}^N A_i x_i \le b,\, x_i \ge 0, \, i=1,2,...,N
\end{array}
\]
boils down to
\[
\begin{array}{ll}
\min\limits_{x,y} & f(x_1,\cdots,x_N) - \mu e^\top \ln x - \mu e^\top \ln y \\
\st & \sum_{i=1}^N A_i x_i + y = b,
\end{array}
\]
where $\mu>0$ and $e^\top \ln v$ indicates the sum of the logarithm of all the components of $v$.  This model is again in the form of \eqref{eq:main}.

\subsection{Related works in the literature}

Our work relates to two recently very popular topics: the \emph{Alternating Direction Method of Multipliers} (ADMM) for multi-block structured problems and the first-order primal-dual method for bilinear saddle-point problems. Below we review the two methods and their convergence results. More complete discussion on their connections to our method will be provided after presenting our algorithm.

\subsubsection*{Multi-block ADMM and its variants}
One well-known approach for solving a linear constrained problem in the form of \eqref{eq:main} is the augmented Lagrangian method, which iteratively updates the primal variable $(x,y)$ by minimizing the augmented Lagrangian function in \eqref{eq:aug-func} and then the multiplier $\lambda$ through dual gradient ascent. However, the linear constraint couples $x_1,\ldots,x_N$ and $y_1,\ldots,y_M$ all together, it can be very expensive to minimize the augmented Lagrangian function simultaneously with respect to all block variables. Utilizing the multi-block structure of the problem, the multi-block ADMM updates the block variables sequentially, one at a time with the others  fixed to their most recent values, followed by the update of multiplier. Specifically, it performs the following updates iteratively (by assuming the absence of the coupled functions $f$ and $g$):
\begin{equation}\label{alg:multi-adm}
\left\{
\begin{array}{rcl}
x_1^{k+1} &=& \arg\min_{x_1 \in \Xcal_1} \CL_\rho(x_1,x_2^k,\cdots,x_N^k, y^k, \lambda^k),\\
          &\vdots& \\
x_N^{k+1} &=& \arg\min_{x_N \in \Xcal_N} \CL_\rho(x_1^{k+1},\cdots,x_{N-1}^{k+1},x_N, y^k,\lambda^k),\\
y_1^{k+1} &=& \argmin_{y_1\in \cY_1}\CL_\rho(x^{k+1},y_1,y_2^k,\cdots,y_M^k,\lambda^k),\\
          &\vdots& \\
y_M^{k+1} &=& \argmin_{y_M\in\cY_M}\CL_\rho(x^{k+1},y_1^{k+1},\cdots,y_{M-1}^{k+1}, y_M, \lambda^k),\\
\lambda^{k+1} &=& \lambda^k-\rho(Ax^{k+1}+By^{k+1}-b),\\\end{array}\right.
\end{equation}
where the augmented Lagrangian function is defined as:
\begin{equation}\label{eq:aug-func}
\CL_{\rho}(x,y,\lambda)=\sum_{i=1}^N u_i(x_i)+\sum_{j=1}^M v_j(y_j)-\lambda^{\top}\left(Ax+By-b\right)+\frac{\rho}{2}\left\|Ax+By-b\right\|^2.
\end{equation}

When there are only two blocks, i.e., $N=M=1$, the update scheme in \eqref{alg:multi-adm} reduces to the classic 2-block ADMM \cite{Glowinski1975,gabay1976dual}.
The convergence properties of the ADMM for solving 2-block separable convex problems have been studied extensively. Since the 2-block ADMM can be viewed as a manifestation of some kind of operator splitting, its convergence follows from that of the so-called Douglas-Rachford operator splitting method; see \cite{glowinski1989augmented,eckstein1992douglas}. Moreover, the convergence rate of the 2-block ADMM has been established recently by 
many authors; see e.g.~\cite{journals/siamnum/HeY12,monteiro2010iteration,deng2012global,LMZ2015JORSC,hongluo2012linearadmm,boley2013local}.


Although the multi-block ADMM scheme in \eqref{alg:multi-adm} performs very well for many instances encountered in practice (e.g. \cite{peng2012rasl,tao2011recovering}), it may fail to converge for some instances if there are more than 2 block variables, i.e., $N+M\ge 3$.
In particular, an example was presented in \cite{chen2013direct} to show that the ADMM may even diverge with 3 blocks of variables, when solving a linear system of equations. Thus, some additional assumptions or modifications will have to be in place to ensure 
convergence of the multi-block ADMM.
In fact, by incorporating some extra correction steps or changing the Gauss-Seidel updating rule, \cite{deng2013parallel,he2012convergence,He-Hou-Yuan,he2012alternating, xu2016hybrid-BCU}
show that the convergence can still be achieved for the multi-block ADMM.
Moreover, if some part of the objective function is strongly convex or the objective has certain regularity property, then it can be shown that the convergence holds under various conditions; see
\cite{chencai2013convergence,LMZ2015JORSC,cai2014directstrong,lin2015global,li2015convergent,han2012note,wang2015ncvx-admm}.
Using some other conditions including the error bound condition and taking small dual stepsizes, or by adding some perturbations to the original problem, authors of \cite{hongluo2012linearadmm,lin2015iteration} establish the rate of convergence results even without strong convexity. Not only for the problem with linear constraint, in \cite{chen2015efficient,li2016schur,sun2015convergent} multi-block ADMM are extended to solve convex linear/quadratic conic programming problems. In a very recent work \cite{sun2015expected}, Sun, Luo and Ye propose a randomly permuted ADMM (RP-ADMM) that basically chooses a random permutation of the block indices and performs the ADMM update according to the order of indices in that permutation, 
and they show that the RP-ADMM converges in expectation for solving non-singular square linear system of equations.


In \cite{hong2014block}, the authors propose a block successive upper bound minimization method of multipliers (BSUMM) to solve problem \eqref{eq:main} without $y$ variable. 
Essentially, at every iteration, the BSUMM replaces the nonseparable part $f(x)$ by an upper-bound function and works on that modified function in an ADMM manner. Under some error bound conditions and a diminishing dual stepsize assumption, the authors are able to show that the iterates produced by the BSUMM algorithm converge to the set of primal-dual optimal solutions. Along a similar direction, Cui et al.\ \cite{cui2015convergence} introduces a quadratic upper-bound function for the nonseparable function $f$ to solve 2-block problems; they show that their algorithm has an $O(1/t)$ convergence rate, where $t$ is the number of total iterations. Very recently, \cite{gao2015first} has proposed a set of variants of the ADMM by adding some proximal terms into the algorithm; the authors have managed to prove $O(1/t)$ convergence rate for the 2-block case, and the same results applied for general multi-block case under some strong convexity assumptions. Moreover, \cite{chen2015convergence} shows the convergence of the ADMM for 2-block problems by imposing quadratic structure on the coupled function $f(x)$ and also the convergence of RP-ADMM for multi-block case where all separable functions vanish (i.e. $u_i(x_i)=0,\,\forall i$).

\subsubsection*{Primal-dual method for bilinear saddle-point problems}
Recently, the work \cite{dang2014randomized} generalizes the first-order primal-dual method in \cite{chambolle2011first} to a randomized method for solving a class of saddle-point problems in the following form:
\begin{equation}\label{eq:saddle-prob}
\min_{z\in\cZ}\left\{h(z)+\max_{x\in \cX} \left\langle z, \sum_{i=1}^N A_ix_i\right\rangle-\sum_{i=1}^N u_i(x_i)\right\},
\end{equation}
where $x=(x_1;\ldots;x_N)$ and $\cX=\cX_1\times\cdots\times\cX_N$. Let $\cZ=\RR^p$ and $h(z)=-b^\top z$. Then it is easy to see that \eqref{eq:saddle-prob} is a saddle-point reformulation of the multi-block structured optimization problem
$$\min_{x\in\cX} \sum_{i=1}^N u_i(x_i), \st \sum_{i=1}^N A_ix_i=b,$$
which is a special case of \eqref{eq:main} without $y$ variable or the coupled function $f$.

At each iteration, the algorithm in \cite{dang2014randomized} chooses one block of $x$-variable uniformly at random and performs a proximal update to it, followed by another proximal update to the $z$-variable. More precisely, it iteratively performs the updates:
\begin{subequations}\label{alg:r1st-pd}
\begin{align}
&x_i^{k+1}=\left\{
\begin{array}{ll}
\argmin_{x_i\in\cX_i} \langle -\bar{z}^k, A_ix_i\rangle + u_i(x_i)+\frac{\tau}{2}\|x_i-x_i^k\|_2^2, & \text{ if }i=i_k, \label{alg:r1st-pd-x}\\
x_i^k, & \text{ if } i\neq i_k,
\end{array}\right.\\
&z^{k+1}=\argmin_{z\in\cZ} h(z)+\langle z, A x^{k+1}\rangle+\frac{\eta}{2}\|z-z^k\|_2^2, \label{alg:r1st-pd-z}\\
&\bar{z}^{k+1}=q(z^{k+1}-z^k)+z^{k+1}, \label{alg:r1st-pd-zbar}
\end{align}
\end{subequations}
where $i_k$ is a randomly selected block, and $\tau,\eta$ and $q$ are certain parameters\footnote{Actually, \cite{dang2014randomized} presents its algorithm in a more general way with the parameters adaptive to the iteration. However, its convergence result assumes constant values of these parameters for the weak convexity case.}. When there is only one block of $x$-variable, i.e., $N=1$, the scheme in \eqref{alg:r1st-pd} becomes exactly the primal-dual method in \cite{chambolle2011first}. Assuming the boundedness of the constraint sets $\cX$ and $\cZ$, \cite{dang2014randomized} shows that under weak convexity, $O(1/t)$ convergence rate result of the scheme can be established by choosing appropriate parameters, and if $u_i$'s are all strongly convex, the scheme can be accelerated to have $O(1/t^2)$ convergence rate by adapting the parameters.

\subsection{Contributions and organization of this paper}
\begin{itemize}
\item We propose a randomized primal-dual coordinate update algorithm to solve problems in the form of \eqref{eq:main}.
The key feature is to introduce randomization as done in \eqref{alg:r1st-pd} to the multi-block ADMM framework \eqref{alg:multi-adm}. Unlike the random permutation scheme as previously investigated in \cite{sun2015expected,chen2015convergence}, we simply choose a subset of blocks of variables based on the uniform distribution. In addition, we perform a proximal update to that selected subset of variables. With appropriate proximal terms (e.g., the setting in \eqref{matPQ}), the selected block variables can be decoupled, and thus the updates can be done in parallel.
\item More general than \eqref{alg:multi-adm}, we can accommodate coupled terms in the objective function in our algorithm by linearizing such terms. By imposing Lipschitz continuity condition on the partial gradient of the coupled functions $f$ and $g$ and using proximal terms, we show that our method has an expected $O(1/t)$ convergence rate for solving problem \eqref{eq:main} under mere convexity assumption. 
\item We show that our algorithm includes several existing methods as special cases such as the scheme in \eqref{alg:r1st-pd} and the proximal Jacobian ADMM in \cite{deng2013parallel}. Our result indicates that the $O(1/t)$ convergence rate of the scheme in \eqref{alg:r1st-pd} can be shown without assuming boundedness of the constraint sets. In addition,  the same order of convergence rate of the proximal Jacobian ADMM can be established in terms of a better measure.
\item Furthermore, the linearization scheme allows us to deal with stochastic objective function, for instance, when the function $f$ is given in a form of expectation $f=\EE_\xi[f_\xi(x)]$ where $\xi$ is a random vector. As long as an unbiased estimator of the (sub-)gradient of $f$ is available, we can extend our method to the stochastic problem and an expected $O(1/\sqrt{t})$ convergence rate is achievable.
\end{itemize}

The rest of the paper is organized as follows. In Section \ref{sec:Prelim}, we introduce our algorithm and present some preliminary results.
In Section \ref{sec:Sublinear-Rate}, we present the sublinear convergence rate results of the proposed algorithm.
Depending on the multi-block structure of $\vy$, different conditions and parameter settings are presented in Subsections \ref{subsection:no-y}, \ref{sbsec:mulxsiny} and  \ref{sbsec:mulxy}, respectively.
In Section~\ref{sbsec:sto-conv}, we present an extension of our algorithm where the objective function is assumed not to be even exactly computable, instead only some first-order stochastic approximation is available. The convergence analysis is extended to such settings accordingly. Numerical results are shown in Section~\ref{sec:numerical}.
In Section~\ref{sec:connection}, we discuss the connections of our algorithm to other well-known methods in the literature. Finally, we conclude the paper in Section~\ref{sec:conc-rem}.
The proofs for the technical lemmas are presented in Appendix~\ref{sec:app-A}, and the proofs for the main theorems are in Appendix~\ref{sec:app-B}.

\section{Randomized Primal-Dual Block Coordinate Update Algorithm}
\label{sec:Prelim}

In this section, we first present some notations and then introduce our algorithm as well as some preliminary lemmas.

\subsection{Notations}
We denote $\cX=\cX_1\times\cdots\times \cX_N$ and $\cY=\cY_1\times\cdots\times \cY_M$. For any symmetric positive semidefinite matrix $W$, we define $\|z\|_W=\sqrt{z^\top W z}$. 
Given an integer $\ell>0$, $[\ell]$ denotes the set $\{1,2,\cdots,\ell\}$. We use $I$ and $J$ as index sets, 
while $I$ is also used to denote the identity matrix; we believe that the intention is evident in the context. Given $I=\{i_1,i_2,\cdots,i_n\}$, 
we denote:
\begin{itemize}
\item Block-indexed variable: $\vx_I=(\vx_{i_1};\vx_{i_2};\cdots;\vx_{i_n})$;
\item Block-indexed set: $\cX_I=\cX_{i_1}\times\cdots\times \cX_{i_n}$;
\item Block-indexed function: $u_I(\vx_I)=u_{i_1}(\vx_{i_1})+u_{i_2}(\vx_{i_2})+\cdots+u_{i_n}(\vx_{i_n})$;
\item Block-indexed gradient: $\nabla_{I}f(x)=(\nabla_{i_1}f(x);\nabla_{i_2}f(x);\cdots;\nabla_{i_n}f(x))$;
\item Block-indexed matrix: $A_I=\big[A_{i_1}, A_{i_2},\cdots,  A_{i_n} \big]$.
\end{itemize}

\subsection{Algorithm}
Our algorithm is rather general. Its major ingredients are randomization in selecting block variables, linearization of the coupled functions $f$ and $g$, and adding proximal terms. Specifically, at each iteration $k$, it first randomly samples a subset $I_k$ of blocks of $\vx$, and then a subset $J_k$ of blocks of $\vy$ according to the uniform distribution over the indices.
The randomized sampling rule is as follows:

\begin{quote}
{\bf Randomization Rule (U):} For the given integers $n\le N$ and $m\le M$, it randomly chooses index sets $I_k\subset[N]$ with  $|I_k|=n$ and $J_k\subset[M]$ with $|J_k|=m$ {\em uniformly}; i.e., for any subsets $\{i_1,i_2,\ldots,i_n\}\subset [N]$ and $\{j_1,j_2,\ldots,j_m\}\subset [M]$, the following holds
\begin{align}
\Prob\big[I_k=\{i_1,i_2,\ldots,i_n\}\big]=1/\left(\begin{array}{c}N\\n\end{array}\right),\cr
\Prob\big[J_k=\{j_1,j_2,\ldots,j_m\}\big]=1/\left(\begin{array}{c}M\\m\end{array}\right).\nonumber
\end{align}
\end{quote}
After those subsets have been selected, it performs a prox-linear update to those selected blocks based on the augmented Lagrangian function, followed by an update of the Lagrangian multiplier.
 The details of the method are summarized in Algorithm \ref{alg:rpdc} below. 

\begin{algorithm}\caption{\textbf{R}andomized \textbf{P}rimal-\textbf{D}ual \textbf{B}lock Coordinate \textbf{U}pdate Method (RPDBU)}\label{alg:rpdc}
\DontPrintSemicolon
{\small\textbf{Initialization:} choose $\vx^0, \vy^0$ and $\vlam^0=0$; let $\vr^0=\vA\vx^0+\vB\vy^0-\vb$; choose $\rho, \rho_x, \rho_y$\;
\For{$k=0,1,\ldots$}{
Randomly select $I_k\subset[N]$ and $J_k\subset[M]$ with $|I_k|=n$ and $|J_k|=m$ according to {\bf (U)}.\;
Let $\vx_i^{k+1}=\vx_i^k,\,\forall i\not\in I_k$ and $\vy_j^{k+1}=\vy_j^k,\,\forall j\not\in J_k$. \;
For $I = I_k$, perform the update
\begin{align}
&\vx_I^{k+1}= \argmin_{\vx_I\in \Xcal_I}\langle \nabla_I f(\vx^k)-\vA_I^\top \vlam^k, \vx_I\rangle+u_I(\vx_I)+\frac{\rho_x}{2}\|\vA_I(\vx_I-\vx_I^k)+\vr^k\|^2+\frac{1}{2}\|\vx_I-\vx_I^k\|_{\vP^k}^2,\label{eq:update-x}\\
&\vr^{k+\frac{1}{2}}=\vr^k+\vA_{I}(\vx_{I}^{k+1}-\vx_{I}^k). \label{eq:update-r1}
\end{align}
For $J = J_k$, perform the update
\begin{align}
&\vy_J^{k+1}= \argmin_{\vy_J\in \Ycal_J}\langle \nabla_J g(\vy^k)-\vB_J^\top \vlam^k, \vy_J\rangle+v_J(\vy_J)+\frac{\rho_y}{2}\|\vB_J(\vy_J-\vy_J^k)+\vr^{k+\frac{1}{2}}\|^2+\frac{1}{2}\|\vy_J-\vy_J^k\|_{\vQ^k}^2,\label{eq:update-y}\\
&\vr^{k+1}=\vr^{k+\frac{1}{2}}+\vB_J(\vy_J^{k+1}-\vy_J^k). \label{eq:update-r2}
\end{align}
Update the multiplier by
\begin{equation}\label{eq:update-lam}
\vlam^{k+1}=\vlam^k-\rho\vr^{k+1}.
\end{equation}
}
}
\end{algorithm}
\normalsize

In Algorithm \ref{alg:rpdc}, $\vP^k$ and $\vQ^k$ are predetermined positive semidefinite matrices with appropriate dimensions.
For the selected blocks in $I_k$ and $J_k$, instead of implementing the exact minimization of the augmented Lagrangian function, we perform a block proximal gradient update. In particular, before minimization, we first linearize the coupled functions $f$, $g$, and add some proximal terms to it. Note that one can always select all blocks, i.e., $I_k=[N]$ and $J_k=[M]$. Empirically however, the block coordinate update method usually outperforms the full coordinate update method if the problem possesses certain structures; see \cite{peng2016cf} for an example. In addition, by choosing appropriate $P^k$ and $Q^k$, the problems \eqref{eq:update-x} and \eqref{eq:update-y} can be separable with respect to the selected blocks, and thus one can update the variables in parallel.

\subsection{Preliminaries}
Let $\vw$ be the aggregated primal-dual variables and $H(w)$ the primal-dual linear mapping; namely
\begin{equation}\label{w-H}
\vw=\left(\begin{array}{c}
\vx\\ \vy \\ \vlam
\end{array}\right),\quad H(\vw)=\left(\begin{array}{c}-\vA^\top\vlam\\
-\vB^\top\vlam\\ \vA\vx+\vB\vy-\vb\end{array}\right),
\end{equation}
and also let
\begin{align*}
&u(\vx)=\sum_{i=1}^N u_i(\vx_i),\quad v(\vy)=\sum_{j=1}^M v_j(\vy_j),\\
&F(\vx)=f(\vx)+u(\vx),\quad G(\vy)=g(\vy)+v(\vy),\quad \Phi(\vx,\vy)=F(\vx)+G(\vy).
\end{align*}
The point $(\vx^*,\vy^*)$ is a solution to \eqref{eq:main} \emph{if and only if} there exists $\vlam^*$ such that
\begin{subequations}\label{sol-cond}
\begin{align}
&\Phi(\vx,\vy)-\Phi(\vx^*,\vy^*)+(\vw-\vw^*)^\top H(\vw^*)\ge 0,\,\forall (\vx,\vy) \in \Xcal\times \Ycal, \,\forall \vlam,\label{eq:1st-opt}\\
& \vA\vx^*+\vB\vy^*=\vb\label{eq:feas}\\
& \vx^*\in \Xcal, \quad \vy^*\in \Ycal.\label{eq:feas2}
\end{align}
\end{subequations}


The following lemmas will be used in our subsequent analysis, whose proofs are elementary and thus are omitted here.
\begin{lemma}
For any two vectors $\vw$ and $\tilde{\vw}$, it holds
\begin{equation}\label{equivHw}(\vw-\tilde{\vw})^\top H(\vw)=(\vw-\tilde{\vw})^\top H(\tilde{\vw}).
\end{equation}
\end{lemma}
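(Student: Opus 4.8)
The plan is to exploit the special affine structure of the mapping $H$. First I would observe that $H$ is affine in $\vw$: writing $\vw=(\vx;\vy;\vlam)$, the definition in \eqref{w-H} can be rewritten as $H(\vw)=K\vw + c$, where
\begin{equation*}
K=\begin{pmatrix} 0 & 0 & -\vA^\top \\ 0 & 0 & -\vB^\top \\ \vA & \vB & 0 \end{pmatrix}, \qquad c=\begin{pmatrix}0\\0\\-\vb\end{pmatrix}.
\end{equation*}
The crucial feature is that the linear part $K$ is skew-symmetric, i.e.\ $K^\top = -K$, because the off-diagonal blocks $\vA$ and $\vB$ appear with opposite signs above and below the diagonal, while the constant $\vb$ contributes only to the additive term $c$.

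Next I would subtract the two sides of \eqref{equivHw}. Since the additive constant $c$ cancels, the difference depends only on the linear part:
\begin{equation*}
(\vw-\tilde{\vw})^\top H(\vw) - (\vw-\tilde{\vw})^\top H(\tilde{\vw}) = (\vw-\tilde{\vw})^\top K (\vw-\tilde{\vw}).
\end{equation*}
Finally, for any skew-symmetric $K$ and any vector $z$ one has $z^\top K z = 0$, since $z^\top K z = (z^\top K z)^\top = z^\top K^\top z = -z^\top K z$, forcing the quantity to vanish. Taking $z=\vw-\tilde{\vw}$ shows the right-hand side above is zero, which is exactly the claimed identity.

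There is really no hard part here: the identity is purely algebraic and its entire content is the skew-symmetry of $K$. If one prefers to avoid matrix notation, the same conclusion follows by expanding both inner products blockwise, in which case the difference reduces to the pair of terms $-(\vx-\tilde{\vx})^\top\vA^\top(\vlam-\tilde{\vlam})+(\vlam-\tilde{\vlam})^\top\vA(\vx-\tilde{\vx})$ together with the analogous pair for $\vB$, each of which cancels because a scalar equals its own transpose. This is why the paper can safely relegate the proof to a one-line remark.
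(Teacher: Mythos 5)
Your proof is correct: the paper omits the proof as elementary, and the skew-symmetry argument you give (with the constant term $-\vb$ cancelling and $z^\top K z=0$ for $K^\top=-K$) is exactly the intended one-line justification. Nothing is missing.
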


\begin{lemma}
For any two vectors $\vu, \vv$ and a positive semidefinite matrix $\vW$:
\begin{equation}\label{uv-cross}
\vu^\top \vW \vv = \frac{1}{2}\big(\|\vu\|_\vW^2+\|\vv\|_\vW^2-\|\vu-\vv\|_\vW^2\big).
\end{equation}
\end{lemma}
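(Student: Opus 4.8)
The plan is to establish the identity by direct expansion, unwinding the definition $\|z\|_{\vW}^2 = z^\top \vW z$ and then exploiting the symmetry of the positive semidefinite matrix $\vW$. First I would rewrite the entire right-hand side in terms of the underlying bilinear form $z^\top \vW z$, replacing each weighted norm $\|\cdot\|_{\vW}^2$ by its quadratic expression. This reduces the claim to a purely algebraic identity about the quadratic form attached to $\vW$.

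Next I would expand the only nontrivial term, namely
\begin{equation*}
\|\vu-\vv\|_{\vW}^2 = (\vu-\vv)^\top \vW (\vu-\vv) = \vu^\top \vW \vu - \vu^\top \vW \vv - \vv^\top \vW \vu + \vv^\top \vW \vv,
\end{equation*}
by distributing the matrix product. The key step is the observation that, since $\vW$ is symmetric (as every positive semidefinite matrix is), the two cross terms coincide: $\vv^\top \vW \vu = (\vv^\top \vW \vu)^\top = \vu^\top \vW^\top \vv = \vu^\top \vW \vv$, because a scalar equals its own transpose. This collapses the two cross terms into a single $-2\,\vu^\top \vW \vv$.

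Finally, I would substitute this expansion back into the right-hand side. The terms $\vu^\top \vW \vu$ and $\vv^\top \vW \vv$ cancel against their counterparts coming from $\|\vu\|_{\vW}^2$ and $\|\vv\|_{\vW}^2$, leaving $\frac{1}{2}\big(2\,\vu^\top \vW \vv\big) = \vu^\top \vW \vv$, which is precisely the left-hand side, completing the argument.

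There is no genuine obstacle in this lemma; the only point that deserves a moment's attention is the symmetry of $\vW$, which is what licenses merging the two cross terms. In fact symmetry alone suffices for the identity, and positive semidefiniteness is stronger than strictly needed here, but it is the hypothesis available and guarantees the symmetry used in the convention adopted throughout the paper.
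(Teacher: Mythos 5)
Your proof is correct: the paper omits the argument as elementary, and your direct expansion of $\|\vu-\vv\|_{\vW}^2$ using the symmetry of $\vW$ (which the paper assumes when defining $\|\cdot\|_{\vW}$) is precisely the intended polarization-identity computation. Nothing is missing.
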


\begin{lemma}
For any nonzero positive semidefinite matrix $\vW$, it holds for any $\vz$ and $\hat{\vz}$ of appropriate size that
\begin{equation}\label{z-posW}
\|\vz-\hat{\vz}\|^2\ge \frac{1}{\|\vW\|_2}\|\vz-\hat{\vz}\|_{\vW}^2,
\end{equation}
where $\|W\|_2$ denotes the matrix operator norm of $W$.
\end{lemma}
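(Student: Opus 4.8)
The plan is to reduce the claimed inequality to the standard Rayleigh-quotient bound for the symmetric positive semidefinite matrix $\vW$. First I would introduce the shorthand $\vd=\vz-\hat{\vz}$, so that, recalling $\|\vd\|_{\vW}^2=\vd^\top\vW\vd$, the assertion \eqref{z-posW} becomes
\[
\vd^\top\vW\vd\le \|\vW\|_2\,\|\vd\|^2 .
\]
Since $\vW$ is nonzero and positive semidefinite we have $\|\vW\|_2>0$, so dividing the displayed inequality by $\|\vW\|_2$ recovers \eqref{z-posW} and the two formulations are equivalent.

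The key step is to invoke the spectral theorem for symmetric matrices: write $\vW=\sum_i \lambda_i q_i q_i^\top$ with eigenvalues $\lambda_i\ge 0$ and orthonormal eigenvectors $\{q_i\}$. For a symmetric positive semidefinite matrix the operator norm coincides with the largest eigenvalue, i.e. $\|\vW\|_2=\lambda_{\max}=\max_i\lambda_i$. Expanding $\vd$ in this orthonormal basis then yields
\[
\vd^\top\vW\vd=\sum_i\lambda_i(q_i^\top\vd)^2\le \lambda_{\max}\sum_i(q_i^\top\vd)^2=\|\vW\|_2\,\|\vd\|^2,
\]
where the final equality is Parseval's identity $\sum_i(q_i^\top\vd)^2=\|\vd\|^2$ coming from orthonormality of the $q_i$. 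Rearranging gives exactly the bound above, and hence \eqref{z-posW}.

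I do not expect any real obstacle here; the statement is elementary, which is why the authors omit its proof. The only two points requiring a moment's care are that the operator norm of a symmetric positive semidefinite matrix equals its largest eigenvalue (rather than merely the largest singular value of a general matrix), which is immediate from the spectral decomposition, and that the division by $\|\vW\|_2$ is legitimate, which is guaranteed precisely by the hypothesis that $\vW$ is nonzero.
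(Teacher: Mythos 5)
Your argument is correct: reducing to $\vd^\top\vW\vd\le\|\vW\|_2\|\vd\|^2$ and applying the spectral decomposition with Parseval's identity is exactly the standard Rayleigh-quotient bound. The paper omits its own proof of this lemma as elementary, and your write-up is the canonical argument the authors have in mind, so there is nothing to compare beyond noting that it is complete and correct.
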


The following lemma presents a useful property of $H(\vw)$, which essentially follows from \eqref{equivHw}.
\begin{lemma}\label{property-on-H}
For any vectors $w^0,w^1,\ldots,w^{t}$, and sequence of positive numbers $\beta^0,\beta^1,\ldots,\beta^t$,  it holds that
\begin{equation}\label{prop-mas-H}
\left(\frac{\sum\limits_{k=0}^t\beta^k\vw^k}{\sum\limits_{k=0}^t\beta^k}-w\right)^{\top}H\left(\frac{\sum\limits_{k=0}^t\beta^k\vw^k}{\sum\limits_{k=0}^t\beta^k}\right)
=\frac{1}{\sum\limits_{k=0}^t\beta^k}\sum\limits_{k=0}^t\beta^k(\vw^k-\vw)^{\top}H(\vw^k).
\end{equation}
\end{lemma}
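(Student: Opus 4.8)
The plan is to show that both sides of \eqref{prop-mas-H} reduce to the single quantity $(\bar\vw-\vw)^\top H(\vw)$, where $\bar\vw:=\big(\sum_{k=0}^t\beta^k\big)^{-1}\sum_{k=0}^t\beta^k\vw^k$ is the weighted average that appears inside $H$ on the left and inside the summation on the right. The whole argument is just two applications of identity \eqref{equivHw}, together with the fact that the normalized weights $\beta^k\big/\sum_j\beta^j$ sum to one.

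First I would rewrite the right-hand side. Applying \eqref{equivHw} termwise with the substitution $\vw\mapsto\vw^k$ and $\tilde\vw\mapsto\vw$ gives $(\vw^k-\vw)^\top H(\vw^k)=(\vw^k-\vw)^\top H(\vw)$ for each $k$. Multiplying by $\beta^k$, summing over $k$, and dividing by $\sum_k\beta^k$, the vector $H(\vw)$ is constant in $k$ and may be pulled outside the sum; since the normalized weights sum to one, the remaining linear combination of the differences $\vw^k-\vw$ collapses to $\bar\vw-\vw$. Hence the right-hand side of \eqref{prop-mas-H} equals $(\bar\vw-\vw)^\top H(\vw)$.

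Next I would treat the left-hand side with a single further use of \eqref{equivHw}, this time with $\vw\mapsto\bar\vw$ and $\tilde\vw\mapsto\vw$, which yields $(\bar\vw-\vw)^\top H(\bar\vw)=(\bar\vw-\vw)^\top H(\vw)$. Comparing this with the previous paragraph shows that the two sides of \eqref{prop-mas-H} coincide, which finishes the proof.

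There is no genuine difficulty here; the proof is essentially bookkeeping built on \eqref{equivHw}, whose validity rests on the skew-symmetry of the linear part of $H$. The one point deserving care is the recombination step: the normalization $\sum_k\beta^k\big/\sum_j\beta^j=1$ is exactly what turns $\frac{1}{\sum_j\beta^j}\sum_k\beta^k(\vw^k-\vw)$ into $\bar\vw-\vw$, since it guarantees that the averaged copies of the shift $\vw$ reproduce $\vw$ itself. Were the weights not summing to one, this step, and hence the identity, would fail.
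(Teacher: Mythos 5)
Your proof is correct and follows exactly the route the paper intends: the paper omits an explicit proof but remarks that the lemma ``essentially follows from \eqref{equivHw}'', and your argument—applying \eqref{equivHw} termwise on the right, once more on the left, and collapsing the weighted sum of $\vw^k-\vw$ to $\bar{\vw}-\vw$ via the normalization of the weights—is precisely that computation. Nothing is missing.
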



\section{Convergence Rate Results}
\label{sec:Sublinear-Rate}

In this section, we establish sublinear convergence rate results of Algorithm \ref{alg:rpdc} for three different cases. We differentiate those cases based on whether or not $\vy$ in problem \eqref{eq:main} also has the multi-block structure. In the first case where $\vy$ is a multi-block variable, it requires $\frac{n}{N}=\frac{m}{M}$ where $n$ and $m$ are the cardinalities of the subsets of $\vx$ and $\vy$ selected in our algorithm respectively. Since the analysis only requires weak convexity, we can ensure the condition to hold by adding \emph{zero} component functions if necessary, in such a way that $N=M$ and then choosing $n=m$. The second case is that $\vy$ is treated as a single-block variable, and this can be reflected in our algorithm by simply selecting all $\vy$-blocks every time, i.e.\ $m=M$. The third case assumes no $y$-variable at all. It falls into the first and second cases, and we discuss this case separately since it requires weaker conditions to guarantee the same convergence rate.

Throughout our analysis, we choose the matrices $P^k$ and $Q^k$ in Algorithm \ref{alg:rpdc} as follows:
\begin{equation}\label{matPQ}
\vP^k=\hat{\vP}_{I_k}-\rho_x\vA_{I_k}^\top\vA_{I_k}, \qquad \vQ^k=\hat{\vQ}_{J_k}-\rho_y\vB_{J_k}^\top\vB_{J_k},
\end{equation}
where $\hat{\vP}$ and $\hat{\vQ}$ are given symmetric positive semidefinite and block diagonal matrices, and $\hat{\vP}_{I_k}$ denotes the diagonal blocks of $\hat{\vP}$ indexed by $I_k$. Note that such choice of $P^k$ and $Q^k$ makes the selected block variables $x_{I_k}$ in \eqref{eq:update-x} and $y_{J_k}$ in \eqref{eq:update-y} decoupled, and thus both updates can be computed in parallel.
In addition, we make the following assumptions:
\begin{assumption}[Convexity]\label{assump1}
For \eqref{eq:main}, $\Xcal_i$'s and $\Ycal_j$'s are some closed convex sets, $f$ and $g$ are smooth convex functions, and $u_i$'s and $v_j$'s are proper closed convex
function. 
\end{assumption}

\begin{assumption}[Existence of a solution]\label{assump2}
There is at least one point $\vw^*=(\vx^*,\vy^*,\vlam^*)$ satisfying the conditions in \eqref{sol-cond}.
\end{assumption}

\begin{assumption}[Lipschitz continuous partial gradient]\label{assump3}
There exist constants $L_f$ and $L_g$ such that for any subset $I$ of $[N]$ with $|I|=n$ and any subset $J$ of $[M]$ with $|J|=m$, it holds that
\begin{subequations}
\begin{align}
&\|\nabla_I f(\vx+\vU_I\tilde{\vx})-\nabla_I f(\vx)\|\le L_f\|\tilde{\vx}_I\|,\,\forall \vx, \tilde{\vx},\\
&\|\nabla_J g(\vy+\vU_J\tilde{\vy})-\nabla_J g(\vy)\|\le L_g\|\tilde{\vy}_J\|,\,\forall \vy, \tilde{\vy},
\end{align}
\end{subequations}
where $U_I \tilde{x}$ keeps the blocks of $\tilde{x}$ that are indexed by $I$ and zero elsewhere. 
\end{assumption}

Before presenting the main convergence rate result, we first establish a few key lemmas.
\begin{lemma}[One-step analysis]\label{lem:1step}
Let $\{(\vx^k,\vy^k, \vr^k,\vlam^k)\}$ be the sequence generated from Algorithm \ref{alg:rpdc} with matrices $\vP^k$ and $\vQ^k$ defined as in \eqref{matPQ}.
Then the following inequalities hold
\begin{eqnarray}\label{eq:1step-x}
& & \EE_{I_k}\left[F(\vx^{k+1})-F(\vx)+(\vx^{k+1}-\vx)^\top(-\vA^\top\vlam^{k+1})\right.\cr
& &\hspace{0.8cm}
+\left.(\rho_x-\rho)(\vx^{k+1}-\vx)^\top\vA^\top\vr^{k+1}
-\rho_x(\vx^{k+1}-\vx)^\top\vA^\top\vB(\vy^{k+1}-\vy^k)\right]\cr
& & +\EE_{I_k}(\vx^{k+1}-\vx)^\top(\hat{\vP}-\rho_x A^\top A)(\vx^{k+1}-\vx^k)-\frac{L_f}{2}\EE_{I_k}\|\vx^k-\vx^{k+1}\|^2\cr
& \le & \left(1-\frac{n}{N}\right)\big[F(\vx^k)-F(\vx)+(\vx^{k}-\vx)^\top(-\vA^\top\vlam^k)+
\rho_x(\vx^{k}-\vx)^\top\vA^\top\vr^{k}\big], \label{ineq-k1-x}
\end{eqnarray}
and
\begin{eqnarray}
& & \EE_{J_k}\big[G(\vy^{k+1})-G(\vy)+(\vy^{k+1}-\vy)^\top(-\vB^\top\vlam^{k+1})+
(\rho_y-\rho)(\vy^{k+1}-\vy)^\top\vB^\top\vr^{k+1}\big]\cr
& & +\EE_{J_k}(\vy^{k+1}-\vy)^\top(\hat{\vQ}-\rho_y B^\top B)(\vy^{k+1}-\vy^k)-\frac{L_g}{2}\EE_{J_k}\|\vy^k-\vy^{k+1}\|^2\cr
& &-
\left(1-\frac{m}{M}\right)\rho_y(\vy^{k}-\vy)^\top\vB^\top\vA(\vx^{k+1}-\vx^k)\cr
& \le & \left(1-\frac{m}{M}\right)\left[G(\vy^k)-G(\vy)+(\vy^{k}-\vy)^\top(-\vB^\top\vlam^k)+
\rho_y(\vy^{k}-\vy)^\top\vB^\top\vr^{k}\right], \label{ineq-k1-y}
\end{eqnarray}
where $\EE_{I_k}$ denotes expectation over $I_k$ and conditional on all previous history.
\end{lemma}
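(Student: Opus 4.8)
The plan is to derive both \eqref{ineq-k1-x} and \eqref{ineq-k1-y} from the first‑order optimality conditions of the two prox‑linear subproblems \eqref{eq:update-x} and \eqref{eq:update-y}, and then to take the conditional expectation $\EE_{I_k}[\cdot]$ (resp. $\EE_{J_k}[\cdot]$) over the uniform block selection. I will describe the argument for the $\vx$-block; the $\vy$-block is identical up to one extra bookkeeping term. First I would write the optimality condition for \eqref{eq:update-x}: since $\vx_{I_k}^{k+1}$ minimizes a convex function over $\Xcal_{I_k}$, there is a subgradient of $u_{I_k}$ at $\vx_{I_k}^{k+1}$ which, combined with convexity of $u_{I_k}$ and the identity $\vA_{I_k}^\top(\vA_{I_k}(\vx_{I_k}^{k+1}-\vx_{I_k}^k)+\vr^k)=\vA_{I_k}^\top\vr^{k+\frac{1}{2}}$ from \eqref{eq:update-r1}, yields a variational inequality valid for all $\vx_{I_k}\in\Xcal_{I_k}$. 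I would then replace the linear $\nabla_{I_k}f(\vx^k)$ term by $F(\vx^{k+1})-F(\vx)$ by combining the $L_f$-descent inequality (legitimate because $\vx^{k+1}$ and $\vx^k$ differ only on the blocks $I_k$, using Assumption~\ref{assump3}) with the plain convexity bound $f(\vx^k)-f(\vx)\le\langle\nabla f(\vx^k),\vx^k-\vx\rangle$; the residual $-\frac{L_f}{2}\|\vx^{k+1}-\vx^k\|^2$ is exactly the term appearing in \eqref{ineq-k1-x}.

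The crucial algebraic simplification comes next, and it happens before taking expectation. Using $\vlam^{k+1}=\vlam^k-\rho\vr^{k+1}$ from \eqref{eq:update-lam} and $\vr^{k+1}=\vr^{k+\frac{1}{2}}+\vB(\vy^{k+1}-\vy^k)$ from \eqref{eq:update-r2}, the three terms on the left of \eqref{ineq-k1-x} carrying $\vlam^{k+1}$, $(\rho_x-\rho)\vr^{k+1}$, and the $\vA^\top\vB$ cross term collapse into $(\vx^{k+1}-\vx)^\top(-\vA^\top\vlam^k)+\rho_x(\vx^{k+1}-\vx)^\top\vA^\top\vr^{k+\frac{1}{2}}$, i.e. everything is now expressed through $\vlam^k$ and $\vr^{k+\frac{1}{2}}$ alone. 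Moreover, the choice $\vP^k=\hat{\vP}_{I_k}-\rho_x\vA_{I_k}^\top\vA_{I_k}$ in \eqref{matPQ}, together with $\vr^{k+\frac{1}{2}}=\vr^k+\vA(\vx^{k+1}-\vx^k)$, makes all $\rho_x\vA^\top\vA$ contributions cancel, so the combined penalty-plus-proximal part reduces to $\rho_x(\vx^{k+1}-\vx)^\top\vA^\top\vr^k+(\vx^{k+1}-\vx)^\top\hat{\vP}(\vx^{k+1}-\vx^k)$ with the block-diagonal $\hat{\vP}$. This reduction to the deterministic (history-measurable) residual $\vr^k$ is what makes the subsequent expectation tractable; Lemma~\eqref{uv-cross} can be used to reorganize the $\hat{\vP}$-quadratic if desired.

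Finally I would take $\EE_{I_k}[\cdot]$. Under the sampling rule~{\bf (U)}, for any block-indexed quantity $\zeta_i$ one has $\EE_{I_k}[\sum_{i\in I_k}\zeta_i]=\frac{n}{N}\sum_{i=1}^N\zeta_i$, and the decoupling of \eqref{eq:update-x} under \eqref{matPQ} guarantees that the value $\hat{\vx}_i^{k+1}$ that block $i$ receives when selected does not depend on the rest of $I_k$; hence $\EE_{I_k}[u(\vx^{k+1})]=\frac{n}{N}\sum_i u_i(\hat{\vx}_i^{k+1})+(1-\frac{n}{N})u(\vx^k)$, and similarly for the linear and quadratic terms. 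Each $\vlam^k$- and $\vr^k$-term then splits into its full-vector expectation (the left side of \eqref{ineq-k1-x}) plus a leftover equal to $(1-\frac{n}{N})$ times the corresponding expression at $\vx^k$ (the right side). For the objective part, the $\frac{n}{N}$-weighted gradient term is converted into the desired $(1-\frac{n}{N})$ structure by the identity $f(\vx)-f(\vx^k)+\frac{n}{N}\langle\nabla f(\vx^k),\vx^k-\vx\rangle\ge(1-\frac{n}{N})\big(f(\vx)-f(\vx^k)\big)$, which is just convexity of $f$. Collecting terms gives \eqref{ineq-k1-x}. The inequality \eqref{ineq-k1-y} follows by the same steps with $(\hat{\vQ},\vB,\rho_y,m,M)$ replacing $(\hat{\vP},\vA,\rho_x,n,N)$ and with $\vr^{k+\frac{1}{2}}$ as the reference residual; the only new point is that, to express the right-hand side through $\vr^k$ as stated, one substitutes $\vr^{k+\frac{1}{2}}=\vr^k+\vA(\vx^{k+1}-\vx^k)$, and the resulting $(1-\frac{m}{M})\rho_y(\vy^k-\vy)^\top\vB^\top\vA(\vx^{k+1}-\vx^k)$ is exactly the extra cross term carried on the left of \eqref{ineq-k1-y}.

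I expect the main obstacle to be the expectation bookkeeping in the last step: correctly accounting for how each block-restricted term splits into a full-vector expectation plus a $(1-\frac{n}{N})$-weighted remainder, and checking that the two cancellations (the $\vA^\top\vB$ cross term via $\vlam^{k+1}\to\vlam^k$, and the $\rho_x\vA^\top\vA$ terms via \eqref{matPQ} and $\vr^{k+\frac{1}{2}}=\vr^k+\vA(\vx^{k+1}-\vx^k)$) are precisely what is needed for the deterministic $\vr^k$ to emerge before taking expectation. The decoupling property of the prox-linear update under \eqref{matPQ} is the ingredient that keeps these conditional expectations clean; once it is invoked, the convexity step producing the $(1-\frac{n}{N})$ factor is routine.
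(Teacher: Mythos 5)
Your proposal is correct and follows essentially the same route as the paper's proof: the variational inequality from the optimality of \eqref{eq:update-x}, term-by-term conditional expectation under the uniform sampling rule, convexity plus the block-restricted descent lemma for $f$ (valid by Assumption~\ref{assump3} since $\vx^{k+1}-\vx^k$ is supported on $I_k$), and the identity relating $(\vlam^k,\vr^{k+\frac{1}{2}})$ to $(\vlam^{k+1},\vr^{k+1})$ --- the paper's \eqref{r-half-to-one} --- to reach the stated form, with the same extra $\vB^\top\vA$ cross term emerging from $\vr^{k+\frac{1}{2}}=\vr^k+\vA(\vx^{k+1}-\vx^k)$ on the $\vy$ side. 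The only cosmetic differences are that you apply that identity at the outset rather than at the end, and that you invoke separability of the subproblem to evaluate $\EE_{I_k}[u(\vx^{k+1})]$ in closed form where the paper simply leaves such expectations unevaluated; just be careful that the $\tfrac{n}{N}$-splitting applies only to terms that are linear in the selected blocks with history-measurable coefficients, while the non-separable quadratic terms must remain under the expectation sign exactly as they appear in \eqref{ineq-k1-x}.
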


Note that for any feasible point $(x,y)$ (namely, $x\in\cX, y\in\cY$ and $Ax+By=b$),
\begin{eqnarray}
\vA\vx^{k+1}-\vA \vx &=&\frac{1}{\rho}(\vlam^k-\vlam^{k+1})-(\vB\vy^{k+1}-\vb)+(\vB\vy-\vb)\cr
&=& \frac{1}{\rho}(\vlam^k-\vlam^{k+1})-\vB(\vy^{k+1}-\vy) \label{x-lam-cross}
\end{eqnarray}
and
\begin{eqnarray}\label{y-lam-cross}
\vB\vy^{k}-\vB \vy &=&\frac{1}{\rho}(\vlam^{k-1}-\vlam^{k})-(\vA\vx^{k}-\vb)+(\vA\vx-\vb)\cr
&=& \frac{1}{\rho}(\vlam^{k-1}-\vlam^{k})-\vA(\vx^{k}-\vx).
\end{eqnarray}
Then, using \eqref{uv-cross} we have the following result.

\begin{lemma}\label{lem:1step-ba}
For any feasible point $(\vx,\vy)$ and integer $t$, it holds
\begin{eqnarray}\label{feas-x}
& & \sum_{k=0}^t (\vx^{k+1}-\vx)^\top\vA^\top\vB(\vy^{k+1}-\vy^k)\\
&=&\frac{1}{\rho}\sum_{k=0}^t (\vlam^k-\vlam^{k+1})^\top\vB(\vy^{k+1}-\vy^k)-\frac{1}{2}\left(\|\vy^{t+1}-\vy\|_{\vB^\top\vB}^2-\|\vy^{0}-\vy\|_{\vB^\top\vB}^2
+\sum_{k=0}^t\|\vy^{k+1}-\vy^k\|_{\vB^\top\vB}^2\right)\nonumber
\end{eqnarray}
and
\begin{eqnarray}\label{feas-y}
& &\sum_{k=0}^t(\vy^{k}-\vy)^\top\vB^\top\vA(\vx^{k+1}-\vx^k)\\
&=&\frac{1}{\rho}\sum_{k=0}^t(\vlam^{k-1}-\vlam^{k})^\top\vA(\vx^{k+1}-\vx^k)+\frac{1}{2}\left(\|\vx^0-\vx\|_{\vA^\top\vA}^2-\|\vx^{t+1}-\vx\|_{\vA^\top\vA}^2
+\sum_{k=0}^t\|\vx^{k+1}-\vx^k\|_{\vA^\top\vA}^2\right). \nonumber
\end{eqnarray}
\end{lemma}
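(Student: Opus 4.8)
The plan is to prove each identity by the same three-move template: substitute the multiplier relation \eqref{x-lam-cross} (respectively \eqref{y-lam-cross}), apply the polarization identity \eqref{uv-cross}, and then telescope the resulting sum. No convexity, sampling, or Lipschitz hypotheses enter; these are purely algebraic consequences of the multiplier update \eqref{eq:update-lam} (through \eqref{x-lam-cross}--\eqref{y-lam-cross}) together with the feasibility of $(\vx,\vy)$.

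For \eqref{feas-x}, I would first move $\vA$ onto the $\vx$-factor, writing the summand as $[\vA(\vx^{k+1}-\vx)]^\top\vB(\vy^{k+1}-\vy^k)$, and then substitute $\vA(\vx^{k+1}-\vx)=\frac{1}{\rho}(\vlam^k-\vlam^{k+1})-\vB(\vy^{k+1}-\vy)$ from \eqref{x-lam-cross}. This splits the summand into the multiplier cross term $\frac{1}{\rho}(\vlam^k-\vlam^{k+1})^\top\vB(\vy^{k+1}-\vy^k)$, which already matches the first term on the right-hand side, and the quadratic term $-(\vy^{k+1}-\vy)^\top\vB^\top\vB(\vy^{k+1}-\vy^k)$. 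Applying \eqref{uv-cross} with $\vu=\vy^{k+1}-\vy$, $\vv=\vy^{k+1}-\vy^k$ and $\vW=\vB^\top\vB$ (so that $\vu-\vv=\vy^k-\vy$) rewrites this as $-\frac{1}{2}\big(\|\vy^{k+1}-\vy\|_{\vB^\top\vB}^2-\|\vy^k-\vy\|_{\vB^\top\vB}^2+\|\vy^{k+1}-\vy^k\|_{\vB^\top\vB}^2\big)$; summing over $k=0,\ldots,t$ telescopes the first two norms into $\|\vy^{t+1}-\vy\|_{\vB^\top\vB}^2-\|\vy^0-\vy\|_{\vB^\top\vB}^2$, yielding exactly the right-hand side of \eqref{feas-x}.

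The argument for \eqref{feas-y} is symmetric, using \eqref{y-lam-cross} in place of \eqref{x-lam-cross}: writing the summand as $[\vB(\vy^k-\vy)]^\top\vA(\vx^{k+1}-\vx^k)$ and substituting $\vB(\vy^k-\vy)=\frac{1}{\rho}(\vlam^{k-1}-\vlam^k)-\vA(\vx^k-\vx)$ produces the multiplier cross term and the quadratic term $-(\vx^k-\vx)^\top\vA^\top\vA(\vx^{k+1}-\vx^k)$. Here I would apply \eqref{uv-cross} with the assignment $\vu=\vx^{k+1}-\vx$, $\vv=\vx^k-\vx$ (so $\vu-\vv=\vx^{k+1}-\vx^k$), after first writing $(\vx^k-\vx)^\top\vA^\top\vA(\vx^{k+1}-\vx^k)=\vv^\top\vW(\vu-\vv)=\vv^\top\vW\vu-\|\vv\|_\vW^2$; this gives $\frac{1}{2}\big(\|\vx^{k+1}-\vx\|_{\vA^\top\vA}^2-\|\vx^k-\vx\|_{\vA^\top\vA}^2-\|\vx^{k+1}-\vx^k\|_{\vA^\top\vA}^2\big)$, and the overall minus sign then makes the telescope collapse to the $+\frac{1}{2}\big(\|\vx^0-\vx\|_{\vA^\top\vA}^2-\|\vx^{t+1}-\vx\|_{\vA^\top\vA}^2+\cdots\big)$ form claimed.

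The computations themselves are elementary, so I expect no genuine obstacle; the one point requiring care is that the two identities telescope with opposite signs, which forces the grouping in \eqref{uv-cross} to differ between the two cases — taking $\vu=\vy^{k+1}-\vy$ in the first but $\vu=\vx^{k+1}-\vx$ in the second — in order to collapse the squared-norm differences in the correct direction and reproduce the stated sign patterns.
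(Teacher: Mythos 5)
Your proposal is correct and is exactly the computation the paper intends: the lemma is stated immediately after \eqref{x-lam-cross}--\eqref{y-lam-cross} with the remark ``using \eqref{uv-cross} we have the following result,'' and your substitute-polarize-telescope argument, including the different groupings of $\vu,\vv$ needed to get the opposite sign patterns in the two identities, fills in that omitted computation faithfully.
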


\begin{lemma}\label{lem:xy-rate}
Given a continuous function $h$, for a random vector $\hat{w}=(\hat{x},\hat{y},\hat{\vlam})$, if for any feasible point $w=(x,y,\vlam)$ that may depend on $\hat{w}$, 
we have
\begin{equation}\label{eq:xy-phi}\EE\big[\Phi(\hat{x},\hat{y})-\Phi(x,y)+(\hat{w}-w)^\top H(w)\big]\le \EE [h(w)],
\end{equation}
then for any $\gamma>0$ and any optimal solution $(x^*,y^*)$ to \eqref{eq:main} we also have
$$\EE\big[\Phi(\hat{x},\hat{y})-\Phi(x^*,y^*)+\gamma\|A\hat{x}+B\hat{y}-b\|\big]\le \sup_{\|\vlam\|\le \gamma}h(x^*,y^*,\vlam).$$
\end{lemma}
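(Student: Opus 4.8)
The plan is to apply the hypothesis \eqref{eq:xy-phi} to a carefully chosen feasible point $w=(x,y,\lambda)$ and then exploit the stated freedom that $\lambda$ may depend on $\hat w$. First I would specialize the pair $(x,y)$ to the optimal pair $(x^*,y^*)$, which is feasible by Assumption \ref{assump2}. The decisive simplification is to evaluate the inner product $(\hat w - w)^\top H(w)$ using the explicit form of $H$ in \eqref{w-H}: because $w=(x^*,y^*,\lambda)$ is feasible, the third block $Ax^*+By^*-b$ of $H(w)$ vanishes, so the $\hat\lambda$-contribution disappears and the entire expression collapses to $-\lambda^\top(A\hat x + B\hat y - b)$. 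With this, \eqref{eq:xy-phi} reads
\[
\EE\!\left[\Phi(\hat x,\hat y)-\Phi(x^*,y^*)-\lambda^\top(A\hat x+B\hat y-b)\right]\le \EE\,[h(x^*,y^*,\lambda)],
\]
and this remains valid for every $\lambda$ that may depend on $\hat w$.

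Next I would choose $\lambda$ so as to convert the linear term into the desired norm. Setting
\[
\lambda=\lambda(\hat w):=-\gamma\,\frac{A\hat x+B\hat y-b}{\|A\hat x+B\hat y-b\|}
\]
whenever the residual is nonzero (and $\lambda(\hat w)=0$ otherwise) guarantees $\|\lambda(\hat w)\|\le\gamma$, and by the opposite alignment it produces $-\lambda^\top(A\hat x+B\hat y-b)=\gamma\|A\hat x+B\hat y-b\|$. Substituting this choice into the displayed inequality gives
\[
\EE\!\left[\Phi(\hat x,\hat y)-\Phi(x^*,y^*)+\gamma\|A\hat x+B\hat y-b\|\right]\le \EE\,[h(x^*,y^*,\lambda(\hat w))].
\]

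Finally, since $\lambda(\hat w)$ always lies in the ball $\{\lambda:\|\lambda\|\le\gamma\}$, I would bound the right-hand integrand pointwise by $h(x^*,y^*,\lambda(\hat w))\le \sup_{\|\lambda\|\le\gamma}h(x^*,y^*,\lambda)$. The supremum is a deterministic constant, so taking expectations yields $\EE\,[h(x^*,y^*,\lambda(\hat w))]\le \sup_{\|\lambda\|\le\gamma}h(x^*,y^*,\lambda)$, and chaining the inequalities delivers the claim. The only delicate point—and precisely what the hypothesis is framed to permit—is that the selected $\lambda(\hat w)$ is a (measurable) function of the random vector $\hat w$ rather than a fixed dual variable; I would also remark that the degenerate case where the residual vanishes is harmless, since there the norm term is zero and $\lambda=0$ is admissible.
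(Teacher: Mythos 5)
Your proposal is correct and follows essentially the same route as the paper: specialize $(x,y)$ to $(x^*,y^*)$, use feasibility to collapse the cross term to $-\lambda^\top(A\hat x+B\hat y-b)$, and then choose $\lambda=-\gamma(A\hat x+B\hat y-b)/\|A\hat x+B\hat y-b\|$, bounding $h$ by its supremum over the ball of radius $\gamma$. The only cosmetic difference is that you expand $(\hat w-w)^\top H(w)$ directly while the paper expands $(\hat w-w)^\top H(\hat w)$ (equal by \eqref{equivHw}), and you explicitly handle the zero-residual case, which the paper leaves implicit.
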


Noting $$\Phi(\vx,\vy)-\Phi(\vx^*,\vy^*)+(\vw-\vw^*)^\top H(\vw^*)=\Phi(\vx,\vy)-\Phi(\vx^*,\vy^*)-(\lambda^*)^\top (Ax+By-b),$$ we can easily show the following lemma by the optimality of $(x^*,y^*,\lambda^*)$ and the Cauchy-Schwarz inequality.
\begin{lemma}\label{lem:lb-obj}
Assume $(x^*,y^*,\lambda^*)$ satisfies \eqref{sol-cond}. Then for any point $(\hat{x},\hat{y})\in\cX\times\cY$, we have
\begin{equation}\label{eq:bd-phi-by-fea}\Phi(\hat{x},\hat{y})-\Phi(x^*,y^*)\ge -\|\lambda^*\|\cdot\|A\hat{x}+B\hat{y}-b\|.
\end{equation}
\end{lemma}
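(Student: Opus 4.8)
The plan is to read off the inequality almost directly from the first optimality condition \eqref{eq:1st-opt}, using the feasibility of $(\vx^*,\vy^*)$ to simplify the pairing with $H(\vw^*)$ and then invoking Cauchy--Schwarz once. The whole argument is a specialization of \eqref{sol-cond}, so I do not expect any substantive difficulty.

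First I would specialize \eqref{eq:1st-opt} to the given point $(\hat{\vx},\hat{\vy})\in\cX\times\cY$, setting $\hat{\vw}=(\hat{\vx};\hat{\vy};\vlam)$ with $\vlam$ arbitrary, to obtain
\[
\Phi(\hat{\vx},\hat{\vy})-\Phi(\vx^*,\vy^*)+(\hat{\vw}-\vw^*)^\top H(\vw^*)\ge 0 .
\]
The key observation, already recorded in the displayed identity immediately preceding the lemma, is that the $\vlam$-block of $H(\vw^*)$ equals $\vA\vx^*+\vB\vy^*-\vb$, which vanishes by the feasibility condition \eqref{eq:feas}. Consequently the inner product collapses onto its primal part and becomes independent of the free multiplier $\vlam$, namely
\[
(\hat{\vw}-\vw^*)^\top H(\vw^*)=-(\vlam^*)^\top(\vA\hat{\vx}+\vB\hat{\vy}-\vb).
\]

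Substituting this into the optimality inequality and rearranging gives
\[
\Phi(\hat{\vx},\hat{\vy})-\Phi(\vx^*,\vy^*)\ge (\vlam^*)^\top(\vA\hat{\vx}+\vB\hat{\vy}-\vb).
\]
To conclude, I would apply the Cauchy--Schwarz inequality to the right-hand side, namely $(\vlam^*)^\top(\vA\hat{\vx}+\vB\hat{\vy}-\vb)\ge -\|\vlam^*\|\cdot\|\vA\hat{\vx}+\vB\hat{\vy}-\vb\|$, which yields \eqref{eq:bd-phi-by-fea} directly.

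There is no genuine obstacle in this lemma: it follows from a single instance of \eqref{sol-cond} together with one application of Cauchy--Schwarz. The only point requiring a moment's care is the simplification of $(\hat{\vw}-\vw^*)^\top H(\vw^*)$, where one must use the feasibility $\vA\vx^*+\vB\vy^*=\vb$ to annihilate the $\vlam$-block of $H(\vw^*)$ and then note that the resulting expression no longer depends on the arbitrary $\vlam$ in \eqref{eq:1st-opt}; this is precisely the identity stated just before the lemma, so the reduction is immediate.
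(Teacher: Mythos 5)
Your proof is correct and follows exactly the route the paper intends: it uses the identity $(\hat{w}-w^*)^\top H(w^*)=-(\lambda^*)^\top(A\hat{x}+B\hat{y}-b)$ (stated just before the lemma, and relying on the feasibility $Ax^*+By^*=b$), then specializes the optimality condition \eqref{eq:1st-opt} and applies Cauchy--Schwarz. No gaps.
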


The following lemma shows a connection between different convergence measures, and it can be simply proved by using \eqref{eq:bd-phi-by-fea}. If both $w$ and $\hat{w}$ are deterministic, it reduces to Lemma 2.4 in \cite{gao2014information}.

\begin{lemma} \label{equiv-rate}
Suppose that
\[
\EE\big[\Phi(\hat{x},\hat{y})-\Phi(x^*,y^*)+\gamma\|A\hat{x}+B\hat{y}-b\|\big] \le \epsilon.
\]
Then, we have
\[
\EE\|A\hat{x}+B\hat{y}-b\|\leq \frac{\epsilon}{\gamma-\|\lambda^*\|} \mbox{ and }  -\frac{\|\lambda^*\|\epsilon}{\gamma-\|\lambda^*\|}\le\EE\big[\Phi(\hat{x},\hat{y})-\Phi(x^*,y^*)\big] \leq \epsilon,
\]
where $(x^*,y^*,\lambda^*)$ satisfies the optimality conditions in \eqref{sol-cond}, and we assume $\|\lambda^*\|< \gamma$.
\end{lemma}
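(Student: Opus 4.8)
The plan is to convert the single scalar hypothesis into the three advertised bounds purely by combining it with the expectation of the pointwise lower bound furnished by Lemma~\ref{lem:lb-obj}. To keep the bookkeeping transparent, I would introduce the two nonnegative-or-signed quantities $\phi := \EE\|A\hat{x}+B\hat{y}-b\|$ and $\delta := \EE\big[\Phi(\hat{x},\hat{y})-\Phi(x^*,y^*)\big]$, noting that $\phi\ge 0$ because it is the expectation of a norm. In this notation the hypothesis reads $\delta + \gamma\phi \le \epsilon$, and the goal is to extract an upper bound on $\phi$ and a two-sided bound on $\delta$.

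The first step is to take expectations in the inequality $\Phi(\hat{x},\hat{y})-\Phi(x^*,y^*)\ge -\|\lambda^*\|\cdot\|A\hat{x}+B\hat{y}-b\|$, which Lemma~\ref{lem:lb-obj} guarantees for every realization of $(\hat{x},\hat{y})\in\cX\times\cY$ under Assumption~\ref{assump2}; this yields the single linear inequality $\delta \ge -\|\lambda^*\|\,\phi$. I then have the two-inequality system $\{\delta + \gamma\phi \le \epsilon,\ \delta \ge -\|\lambda^*\|\,\phi\}$ in the unknowns $\delta,\phi$, and every claim follows by elementary elimination. For the feasibility estimate, substitute the lower bound for $\delta$ into the hypothesis to obtain $(\gamma-\|\lambda^*\|)\,\phi \le \epsilon$, and divide by the strictly positive quantity $\gamma-\|\lambda^*\|$ to conclude $\phi \le \epsilon/(\gamma-\|\lambda^*\|)$. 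For the upper bound on the objective gap, simply drop the nonnegative term $\gamma\phi$ from the hypothesis to get $\delta\le\epsilon$. For the lower bound, feed the just-derived feasibility estimate back into $\delta\ge-\|\lambda^*\|\,\phi$, which gives $\delta \ge -\|\lambda^*\|\,\epsilon/(\gamma-\|\lambda^*\|)$.

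There is no genuine analytic obstacle here; the content is entirely in organizing the two linear inequalities. The two points that must be tracked carefully are the standing assumption $\|\lambda^*\|<\gamma$, which makes the denominator $\gamma-\|\lambda^*\|$ positive so that dividing through preserves the sense of the inequality, and the fact that $(x^*,y^*,\lambda^*)$ satisfies the optimality conditions~\eqref{sol-cond} so that Lemma~\ref{lem:lb-obj} is applicable. The only subtlety worth a sentence in the write-up is that $\hat{w}$ is random, so the lower bound from Lemma~\ref{lem:lb-obj} is invoked after taking expectations rather than before; this is exactly the generalization over the deterministic Lemma~2.4 of \cite{gao2014information} alluded to in the statement.
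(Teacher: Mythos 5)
Your argument is correct and is exactly the route the paper intends: it states that the lemma ``can be simply proved by using \eqref{eq:bd-phi-by-fea}'' (Lemma~\ref{lem:lb-obj}), and your elimination of the two linear inequalities $\delta+\gamma\phi\le\epsilon$ and $\delta\ge-\|\lambda^*\|\phi$, together with the observation that $\gamma-\|\lambda^*\|>0$ justifies the division, is precisely that computation carried out in full. Nothing is missing.
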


The convergence analysis for Algorithm~\ref{alg:rpdc} requires slightly different parameter settings under different structures. In fact, the underlying analysis and results also differ. To account for the differences, we present in the next three subsections the corresponding convergence results. { The first one assumes there is no $y$ part at all; 
the second case assumes a single block on the $y$ side; 
the last one deals with the general case where the ratios $n/N$ is assumed to be equal to $m/M$. 
}

\subsection{Multiple $\vx$ blocks and no $y$ variable} \label{subsection:no-y}
We first consider a special case with no $y$-variable, namely, $g=v=0$ and $B=0$ in \eqref{eq:main}. { This case has its own importance. It is a parallel block coordinate update version of the linearized augmented Lagrangian method (ALM).}

\begin{theorem}[Sublinear ergodic convergence I]\label{thm:rate-3X}
Assume $g(y)=0, v_j(y_j)=0,\,\forall j$ and $B=0$ in \eqref{eq:main}. Let $\{(\vx^k,\vy^k, \vlam^k)\}$ be the sequence generated from Algorithm \ref{alg:rpdc} with 
$\vy^k\equiv y^0$. Assume $\frac{n}{N}=\theta,\,\rho=\theta\rho_x,$
and
\begin{equation}\label{para-mat-3X}
{\vP}^k\succeq L_f\vI,\,\forall k.\end{equation}
Let
\begin{equation}\label{avg-pt-sy-X}\hat{\vx}^{t}=\frac{\vx^{t+1}+\theta\sum_{k=1}^t\vx^k}{1+\theta t}.
\end{equation}
Then, under Assumptions~\ref{assump1}, \ref{assump2} and \ref{assump3}, we have 
{ \begin{eqnarray}\label{eq:rate-iym-X}
& &\max\left\{\left|\EE\big[F(\hat{\vx}^t)-F(\vx^*)\big]\right|,\, \EE\|A\hat{x}^t-b\|\right\}\\
&\le &\frac{1}{1+\theta t}\left[(1-\theta)\left(F(\vx^0)-F(\vx^*)+\rho_x\|r^0\|^2\right) +\frac{1}{2}\|x^0-x^*\|_{\tilde{P}}^2+\frac{\max\{(1+\|\lambda^*\|)^2, 4\|\lambda^*\|^2\}}{2\rho_x}\right]\notag 
\end{eqnarray}
}
where $\tilde{P}=\hat{P}-\rho_x A^\top A$, and $(x^*,\lambda^*)$ is an arbitrary primal-dual solution.
\end{theorem}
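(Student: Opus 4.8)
The plan is to start from the one-step analysis of Lemma~\ref{lem:1step} (inequality~\eqref{ineq-k1-x}), specialized to the present setting by putting $\vB=0$, so that $\vr^{k+1}=\vA\vx^{k+1}-\vb$ and the cross term $-\rho_x(\vx^{k+1}-\vx)^\top\vA^\top\vB(\vy^{k+1}-\vy^k)$ drops out. Fixing a feasible reference $\vx$ (so $\vA\vx=\vb$) and an arbitrary multiplier $\vlam$, I would aim to reduce everything to a recursion in the gap quantity $a^k:=F(\vx^k)-F(\vx)+(\vx^k-\vx)^\top(-\vA^\top\vlam^k)+\rho_x(\vx^k-\vx)^\top\vA^\top\vr^k$, i.e.\ the bracket on the right of \eqref{ineq-k1-x}. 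Two reductions are available: feasibility gives $(\vx^{k+1}-\vx)^\top\vA^\top\vr^{k+1}=\|\vr^{k+1}\|^2$, and the multiplier update \eqref{eq:update-lam}, $\vlam^{k+1}=\vlam^k-\rho\vr^{k+1}$, lets me trade $(\vx^{k+1}-\vx)^\top(-\vA^\top\vlam^{k+1})$ for $(\vx^{k+1}-\vx)^\top(-\vA^\top\vlam^k)+\rho\|\vr^{k+1}\|^2$. Combined with the parameter choice $\rho=\theta\rho_x$, these turn the left-hand side of \eqref{ineq-k1-x} into $a^{k+1}$ minus a nonnegative multiple of $\|\vr^{k+1}\|^2$, so that \eqref{ineq-k1-x} becomes an inequality relating $a^{k+1}$ to $(1-\theta)a^k$ plus quadratic remainders.

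Next I would clear the remainders. Applying \eqref{uv-cross} to $(\vx^{k+1}-\vx)^\top(\hat{\vP}-\rho_x\vA^\top\vA)(\vx^{k+1}-\vx^k)$ produces the telescoping difference $\tfrac12(\|\vx^{k+1}-\vx\|_{\tilde P}^2-\|\vx^k-\vx\|_{\tilde P}^2)$ plus a surplus $\tfrac12\|\vx^{k+1}-\vx^k\|_{\tilde P}^2$, where $\tilde P=\hat{\vP}-\rho_x\vA^\top\vA$. Since $\vx^{k+1}-\vx^k$ is supported on $I_k$ and $\hat{\vP}$ is block diagonal, this surplus equals $\tfrac12\|\vx^{k+1}-\vx^k\|_{\vP^k}^2$, which by \eqref{para-mat-3X} dominates the linearization error $\tfrac{L_f}{2}\|\vx^{k+1}-\vx^k\|^2$, so both may be discarded. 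The dual/feasibility terms are handled by \eqref{uv-cross} as well: substituting $\vr^k=\tfrac1\rho(\vlam^{k-1}-\vlam^k)$ converts the multiplier cross terms into telescoping differences of $\|\vlam^k-\vlam\|^2$, while the residual squares $\|\vr^k\|^2$ that fall out combine, precisely because $\rho=\theta\rho_x$, into nonpositive quantities.

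With the one-step inequality in this reduced form I would take total expectation and sum over $k=0,\dots,t$. The asymmetric factor $1-\theta$ is the engine of the ergodic weighting: rearranging $\EE[a^{k+1}]\le(1-\theta)\EE[a^k]+(\text{telescoping})$ gives $\EE[a^{t+1}]+\theta\sum_{k=1}^t\EE[a^k]\le(1-\theta)a^0+(\text{boundary terms})$, i.e.\ weight $1$ on the last iterate and $\theta$ on the intermediate ones, exactly the combination defining $\hat\vx^t$ in \eqref{avg-pt-sy-X}. Using convexity of $F$ for the objective part and Lemma~\ref{property-on-H} together with \eqref{equivHw} for the bilinear part, the weighted sum is bounded below by $(1+\theta t)\big[F(\hat\vx^t)-F(\vx)-\vlam^\top(\vA\hat\vx^t-\vb)\big]$. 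The outcome should be an inequality of the form \eqref{eq:xy-phi} with $h(\vx,\vlam)=\tfrac1{1+\theta t}\big[(1-\theta)(F(\vx^0)-F(\vx)+\rho_x\|\vr^0\|^2)+\tfrac12\|\vx^0-\vx\|_{\tilde P}^2+\tfrac1{2\rho_x}\|\vlam\|^2\big]$, where I used $\vlam^0=0$ and $\rho=\theta\rho_x$.

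Finally I would invoke Lemma~\ref{lem:xy-rate} with $\vx=\vx^*$: taking $\sup_{\|\vlam\|\le\gamma}$ turns $\tfrac1{2\rho_x}\|\vlam\|^2$ into $\tfrac{\gamma^2}{2\rho_x}$ and yields $\EE[F(\hat\vx^t)-F(\vx^*)+\gamma\|\vA\hat\vx^t-\vb\|]\le\sup_{\|\vlam\|\le\gamma}h$. Applying Lemma~\ref{equiv-rate} with $\gamma=1+\|\vlam^*\|$ (so $\gamma-\|\vlam^*\|=1$) controls the feasibility residual, and with $\gamma=2\|\vlam^*\|$ (so $\tfrac{\|\vlam^*\|}{\gamma-\|\vlam^*\|}=1$) controls the two-sided objective gap; keeping the larger of the two values of $\gamma^2$ gives the factor $\max\{(1+\|\vlam^*\|)^2,4\|\vlam^*\|^2\}$ and the stated estimate. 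I expect the real difficulty to lie in Steps~(2)--(3): keeping the dual-distance and residual cross terms aligned so that, under the \emph{non-uniform} ergodic weights ($1$ versus $\theta$), every surviving square is nonpositive and the telescoping collapses exactly to $\tfrac12\|\vx^0-\vx^*\|_{\tilde P}^2$ and $\tfrac1{2\rho_x}\|\vlam\|^2$. The choice $\rho=\theta\rho_x$ is what cancels the residual-square coefficients and $\vP^k\succeq L_f\vI$ is what absorbs the linearization error; the most delicate point is to confirm that no positive boundary term survives at the terminal index, where the averaging weight jumps from $\theta$ to $1$.
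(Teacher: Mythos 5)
Your proposal is correct and follows essentially the same route as the paper's proof: summing the one-step inequality \eqref{ineq-k1-x} with $B=0$, telescoping via \eqref{uv-cross}, absorbing the linearization error through $\|x^{k+1}-x^k\|_{\tilde P}^2=\|x_{I_k}^{k+1}-x_{I_k}^k\|_{P^k}^2\ge L_f\|x^{k+1}-x^k\|^2$, and closing with Lemmas \ref{lem:xy-rate} and \ref{equiv-rate} at $\gamma=\max\{1+\|\lambda^*\|,2\|\lambda^*\|\}$. The terminal-index delicacy you flag is resolved in the paper exactly as your $\rho=\theta\rho_x$ bookkeeping suggests, by pairing the last iterate with the auxiliary multiplier $\tilde{\lambda}^{t+1}=\lambda^t-\rho_x r^{t+1}$ so that $\frac{1}{\rho_x}\|\tilde{\lambda}^{t+1}-\lambda^t\|^2=\rho_x\|x^{t+1}-x\|_{A^\top A}^2$ and no positive boundary term survives.
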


{If there is no coupled function $f$, \eqref{para-mat-3X} indicates that we can even choose $P^k=0$ for all $k$, i.e.\ without proximal terms at all. 
Some caution is required here: in that case when $I_k=[N],\forall k$, the algorithm is {\it not}\/ the Jacobian ADMM as discussed in \cite{He-Hou-Yuan} since the block variables are still coupled in the augmented Lagrangian function. To make it parallelizable, a proximal term is needed. Then our result recovers the convergence of the proximal Jacobian ADMM introduced in \cite{deng2013parallel}. In fact, the above theorem strengthens the convergence result in \cite{deng2013parallel} by establishing an $O(1/t)$ rate of convergence in terms of the feasibility measure and the objective value.}

\subsection{Multiple $\vx$ blocks and a single $\vy$ block}\label{sbsec:mulxsiny}
When the $y$-variable is simple to update, it could be beneficial to renew the whole of it at every iteration, such as the problem \eqref{prob:classo1}. In this subsection, we consider the case that there are multiple $\vx$-blocks but a single $\vy$-block (or equivalently, $m=M$), and we establish a sublinear convergence rate result with a different technique of dealing with the $y$-variable.

\begin{theorem}[Sublinear ergodic convergence II]\label{thm:rate-1Yw}
Let $\{(\vx^k,\vy^k, \vlam^k)\}$ be the sequence generated from Algorithm \ref{alg:rpdc} with $m=M$ and $\rho=\rho_y=\theta\rho_x$, where $\theta=\frac{n}{N}=\theta.$ Assume

\begin{equation}\label{para-mat-1Yw}
\hat{\vP}\succeq L_f\vI+\rho_x A^\top A, \qquad
\hat{\vQ}\succeq \frac{L_g}{\theta}\vI+\left(\frac{\rho}{\theta^4}-\frac{\rho}{\theta^2}+\rho_y\right)\vB^\top\vB.
\end{equation}
Let
\begin{equation}\label{avg-pt-sy}\hat{\vx}^{t}=\frac{\vx^{t+1}+\theta\sum_{k=1}^t\vx^k}{1+\theta t},
\quad \hat{\vy}^{t}=\frac{\tilde{\vy}^{t+1}+\theta\sum_{k=1}^t\vy^k}{1+\theta t}
\end{equation}
where
\begin{equation}\tilde{\vy}^{t+1}=\argmin_{\vy\in \Ycal}\langle\nabla g(\vy^t)-\vB^\top\vlam^t, \vy\rangle +v(\vy)+\frac{\rho_x}{2}\|\vA\vx^{t+1}+\vB\vy-\vb\|^2+\frac{\theta}{2}\|\vy-\vy^t\|_{\hat{\vQ}-\rho_y B^\top B}^2.
\end{equation}
Then, under Assumptions~\ref{assump1}, \ref{assump2} and \ref{assump3}, we have 
{ \begin{eqnarray}\label{eq:rate-iym}
& &\max\left\{\left|\EE\big[\Phi(\hat{\vx}^t,\hat{\vy}^t)-\Phi(\vx^*,\vy^*)\big]\right|,\,\EE\|A\hat{x}^t+B\hat{y}^t-b\|\right\}\\
& \le &\frac{1}{1+\theta t}\left[(1-\theta)\left(\Phi(\vx^0,\vy^0)-\Phi(\vx^*,\vy^*)+\frac{\rho_x}{2}\|\vr^{0}\|^2\right)+\frac{1}{2}\|x^0-x^*\|_{\hat{P}}^2+\frac{1}{2}\|y^0-y^*\|_{\theta\tilde{Q}+\rho_x B^\top B}^2\right.\notag\\ 
& &\hspace{1.5cm} +\left.\frac{\max\{(1+\|\lambda^*\|)^2, 4\|\lambda^*\|^2\}}{2\rho_x}\right] \notag
\end{eqnarray}
}
where $\tilde{Q}=\hat{Q}-\rho_y B^\top B$, and $(x^*,y^*,\lambda^*)$ is an arbitrary primal-dual solution.
\end{theorem}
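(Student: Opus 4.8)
The plan is to start from the two one-step inequalities \eqref{ineq-k1-x} and \eqref{ineq-k1-y} of Lemma~\ref{lem:1step}, specialize them to the present regime $m=M$ and $\rho=\rho_y=\theta\rho_x$, add them, sum over $k=0,\dots,t$, and finally invoke the conversion Lemmas~\ref{lem:xy-rate} and~\ref{equiv-rate}. Under $m=M$ the factor $1-\tfrac{m}{M}$ vanishes, so both the right-hand side of \eqref{ineq-k1-y} and its $\vB^\top\vA$ cross term drop out, and since $\rho_y=\rho$ the term $(\rho_y-\rho)(\cdot)\vB^\top\vr^{k+1}$ disappears as well; the expectation $\EE_{J_k}$ is trivial. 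This leaves a clean per-step bound on the $\vy$ side, while the $\vx$ side retains the recursive factor $1-\theta$.

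Next I would introduce the primal potential $a_x^k=F(\vx^k)-F(\vx)+(\vx^k-\vx)^\top(-\vA^\top\vlam^k)+\rho_x(\vx^k-\vx)^\top\vA^\top\vr^k$ and its analogue $a_y^k$. Using $\rho_x-\rho=\rho_x(1-\theta)$, the left side of \eqref{ineq-k1-x} equals $a_x^{k+1}$ minus the correction $\rho(\vx^{k+1}-\vx)^\top\vA^\top\vr^{k+1}$ plus the proximal and Lipschitz quadratics, while its right side is $(1-\theta)a_x^k$. Summing over $k$ and rearranging, the $(1-\theta)$-weighted telescope collapses to
\[
a_x^{t+1}+\theta\sum_{k=1}^t a_x^k\ \le\ (1-\theta)\,a_x^0+(\text{corrections}),
\]
which is exactly the weighting $1,\theta,\dots,\theta$ (and weight $0$ on $\vx^0$) that defines $\hat{\vx}^t$ in \eqref{avg-pt-sy}, with total weight $1+\theta t$ supplying the factor $\tfrac{1}{1+\theta t}$ and the term $(1-\theta)\big(\Phi(\vx^0,\vy^0)-\Phi(\vx^*,\vy^*)+\tfrac{\rho_x}{2}\|\vr^0\|^2\big)$. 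The $\vy$ side is assembled with the same weights; the endpoint $k=t$ forces the proximally corrected iterate $\tilde{\vy}^{t+1}$, which carries the weight-one contribution in $\hat{\vy}^t$ and restores the first-order inequality that the full (non-randomized) $\vy$-update would otherwise leave out.

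Several families of terms must then be turned into telescoping differences. The multiplier pieces $-(\vx^{k+1}-\vx)^\top\vA^\top\vlam^{k+1}-(\vy^{k+1}-\vy)^\top\vB^\top\vlam^{k+1}$ are completed into $(\vw^{k+1}-\vw)^\top H(\vw^{k+1})$ by adding $(\vlam^{k+1}-\vlam)^\top\vr^{k+1}$ to both sides; using $\vlam^{k+1}=\vlam^k-\rho\vr^{k+1}$ together with \eqref{uv-cross} converts this (and the correction term) into a telescope of $\tfrac{1}{2\rho}\|\vlam^k-\vlam\|^2$, which, carried with the weight $\theta$ of the $H$-terms and using $\rho=\theta\rho_x$, contributes the constant $\tfrac{1}{2\rho_x}\|\lambda\|^2$ (since $\vlam^0=0$). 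The coupling term $-\rho_x(\vx^{k+1}-\vx)^\top\vA^\top\vB(\vy^{k+1}-\vy^k)$ is handled by the summation identity \eqref{feas-x} of Lemma~\ref{lem:1step-ba}, and the proximal terms $(\vx^{k+1}-\vx)^\top(\hat{\vP}-\rho_x\vA^\top\vA)(\vx^{k+1}-\vx^k)$ and its $\vy$ analogue by \eqref{uv-cross}, producing telescopes of $\tfrac12\|\vx^{k+1}-\vx\|^2_{\hat{\vP}}$ and $\tfrac12\|\vy^{k+1}-\vy\|^2_{\theta\tilde{\vQ}+\rho_x\vB^\top\vB}$ that match the initial-distance terms in \eqref{eq:rate-iym}. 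After taking total expectation, using convexity of $\Phi$ and the averaging property \eqref{prop-mas-H} of Lemma~\ref{property-on-H} to pass from the weighted sum of $(\vw^k-\vw)^\top H(\vw^k)$ to $(\hat{\vw}^t-\vw)^\top H(\hat{\vw}^t)$, I obtain an estimate of the form required by Lemma~\ref{lem:xy-rate}. Plugging in an optimal $(\vx^*,\vy^*)$, taking the supremum over $\|\vlam\|\le\gamma$, and finally applying Lemma~\ref{equiv-rate} with the two choices $\gamma=1+\|\lambda^*\|$ and $\gamma=2\|\lambda^*\|$ yields the $\max\{(1+\|\lambda^*\|)^2,4\|\lambda^*\|^2\}$ constant and the stated two-sided bound.

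The hard part will be the step in which all residual quadratic cross terms — the leftover $\tfrac12\|\vx^{k+1}-\vx^k\|^2$ and $\tfrac12\|\vy^{k+1}-\vy^k\|^2$ pieces generated by the Lipschitz bounds $-\tfrac{L_f}{2}\|\cdot\|^2$ and $-\tfrac{L_g}{2}\|\cdot\|^2$, by the expansions of \eqref{feas-x} and \eqref{uv-cross}, and by the non-uniform $\theta$-weighting of the $\vy$ terms — are shown to aggregate with positive semidefinite net coefficients and can therefore be discarded. This is exactly what the matrix conditions \eqref{para-mat-1Yw} encode: the unusual powers $\rho/\theta^4$ and $\rho/\theta^2$ reflect that the full $\vy$-update is weighted by $\theta$ relative to the expected $\vx$-block update while the dual residual is scaled by $1/\rho=1/(\theta\rho_x)$, so that successive $\vy$-differences re-enter the estimate amplified by inverse powers of $\theta$. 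Verifying that $\hat{\vP}\succeq L_f\vI+\rho_x\vA^\top\vA$ and $\hat{\vQ}\succeq\tfrac{L_g}{\theta}\vI+\big(\tfrac{\rho}{\theta^4}-\tfrac{\rho}{\theta^2}+\rho_y\big)\vB^\top\vB$ make every such aggregate coefficient $\preceq 0$ is the delicate bookkeeping on which the whole estimate rests.
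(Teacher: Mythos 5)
Your proposal follows essentially the same route as the paper's proof: specialize Lemma~\ref{lem:1step} to $m=M$ and $\rho=\rho_y=\theta\rho_x$, sum with the $(1,\theta,\dots,\theta)$ weights matching \eqref{avg-pt-sy}, introduce the corrected iterates $\tilde{\vy}^{t+1}$ and $\tilde{\vlam}^{t+1}$ at the endpoint, telescope the cross terms via \eqref{uv-cross} and \eqref{feas-x}, absorb the residual $\|\vy^{k+1}-\vy^k\|_{B^\top B}^2$ terms (whence the $\rho/\theta^4$ and $\rho/\theta^2$ coefficients) using \eqref{para-mat-1Yw}, and finish with Lemmas~\ref{lem:xy-rate} and~\ref{equiv-rate} at $\gamma=\max\{1+\|\lambda^*\|,2\|\lambda^*\|\}$. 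The bookkeeping you flag as the hard part is exactly what the paper carries out with two Young-inequality bounds pairing the dual increments against the $\vy$-differences, so your plan is correct and matches the published argument.
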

\begin{remark}
It is easy to see that if $\theta=1$, the result in Theorem \ref{thm:rate-1Yw} becomes exactly the same as that in Theorem \ref{thm:rate-cvx} below. In general, they are different because the conditions in \eqref{para-mat-1Yw} on $\hat{P}$ and $\hat{Q}$ are different from those in \eqref{para-mat}.
\end{remark}

\subsection{Multiple $\vx$ and $\vy$ blocks}\label{sbsec:mulxy}
In this subsection, we consider the most general case where both $\vx$ and $\vy$ have multi-block structure. Assuming $\frac{n}{N}=\frac{m}{M}$, we can still have the $O(1/t)$ convergence rate. The assumption can be made without losing generality, e.g., by adding zero components if necessary (which is essentially equivalent to varying the probabilities of the variable selection).
\begin{theorem}[Sublinear ergodic convergence III]\label{thm:rate-cvx}
Let $\{(\vx^k,\vy^k, \vlam^k)\}$ be the sequence generated from Algorithm \ref{alg:rpdc} with the parameters satisfying
\begin{align}
&\rho=\frac{n\rho_x}{N}=\frac{m\rho_y}{M}>0.\label{para-rho}
\end{align}
Assume $\frac{n}{N}=\frac{m}{M}=\theta,$
and $\hat{P}, \hat{Q}$ satisfy one of the following conditions
\begin{subequations}\label{matPQhat}
\begin{align}\label{para-mat}
&\hat{\vP}\succeq (2-\theta)\left(\frac{1-\theta}{\theta^2}+1\right)\rho_x\vA^\top\vA+L_f\vI, \qquad
\hat{\vQ}\succeq \frac{(2-\theta)}{\theta^2}\rho_y\vB^\top\vB+L_g\vI.\\
&\hat{P}_i\succeq(2-\theta)\left(\frac{1-\theta}{\theta^2}+1\right)n\rho_x\vA_i^\top\vA_i+L_f\vI,\,\forall i,\quad \hat{\vQ}_j\succeq \frac{(2-\theta)}{\theta^2}m\rho_y\vB_j^\top\vB_j+L_g\vI,\,\forall j. \label{para-mat-ij}
\end{align}
\end{subequations}
Let
\begin{equation}\label{avg-pt}\hat{\vx}^{t}=\frac{\vx^{t+1}+\theta\sum_{k=1}^t\vx^k}{1+\theta t},\quad \hat{\vy}^{t}=\frac{\vy^{t+1}+\theta\sum_{k=1}^t\vy^k}{1+\theta t}.
\end{equation}
Then, under Assumptions~\ref{assump1}, \ref{assump2} and \ref{assump3}, we have 
{ \begin{eqnarray}\label{eq:rate-cvx}
& &\max\left\{\left|\EE\big[\Phi(\hat{\vx}^t,\hat{\vy}^t)-\Phi(\vx^*,\vy^*)\big]\right|,\,\EE\|A\hat{x}^t+B\hat{y}^t-b\|\right\}\\
& \le &\frac{1}{1+\theta t}\left[(1-\theta)\left(\Phi(\vx^0,\vy^0)-\Phi(\vx^*,\vy^*)+\rho_x\|r^0\|^2\right) +\frac{1}{2}\|x^0-x^*\|_{\tilde{P}}^2+\frac{1}{2}\|y^0-y^*\|_{\hat{Q}}^2\right.\nonumber\\
&&\hspace{1.5cm}\left.+\frac{\max\{(1+\|\lambda^*\|)^2, 4\|\lambda^*\|^2\}}{2\rho_x}\right] \nonumber
\end{eqnarray}
}
where $\tilde{P}=\hat{P}-\theta\rho_x A^\top A$, and $(x^*,y^*,\lambda^*)$ is an arbitrary primal-dual solution.
\end{theorem}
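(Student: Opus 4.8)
The plan is to turn the two one-step estimates \eqref{ineq-k1-x} and \eqref{ineq-k1-y} of Lemma~\ref{lem:1step} into a single telescoping recursion whose weights reproduce the ergodic average \eqref{avg-pt}. For a fixed feasible test point $w=(x,y,\lambda)$, write $a^k:=F(\vx^k)-F(x)+(\vx^k-x)^\top(-\vA^\top\vlam^k)+\rho_x(\vx^k-x)^\top\vA^\top\vr^k$ and the analogous $b^k$ for the $\vy$-block. Under the present choice $\rho=\theta\rho_x=\theta\rho_y$ (so $\rho_x=\rho_y$), the left sides of \eqref{ineq-k1-x}--\eqref{ineq-k1-y} are exactly $a^{k+1}$ and $b^{k+1}$ up to cross, proximal, and Lipschitz corrections, while the right sides are $(1-\theta)a^k$ and $(1-\theta)b^k$. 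Setting $s^k=a^k+b^k$ gives $\EE[s^{k+1}]\le(1-\theta)\EE[s^k]+\EE[c^k]$ with $c^k$ collecting the corrections. Summing over $k=0,\dots,t$ and cancelling yields $\EE[s^{t+1}]+\theta\sum_{k=1}^t\EE[s^k]\le(1-\theta)s^0+\sum_{k=0}^t\EE[c^k]$; the surviving left side carries weight $1$ on the terminal iterate and weight $\theta$ on the interior iterates, which is precisely the combination averaged in \eqref{avg-pt}, and the $(1-\theta)s^0$ term (with $\vlam^0=0$) produces the initial contribution $(1-\theta)(\Phi(\vx^0,\vy^0)-\Phi(x,y)+\rho_x\|\vr^0\|^2)$ in the final bound.

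Next I would convert this left side into the quantity required by Lemma~\ref{lem:xy-rate}. A direct computation gives $s^k=\Phi(\vx^k,\vy^k)-\Phi(x,y)+(\vw^k-w)^\top H(\vw^k)+\rho_x\|\vr^k\|^2+(\lambda-\vlam^k)^\top\vr^k$, and by the skew identity \eqref{equivHw} we may replace $H(\vw^k)$ by $H(w)$ in the bilinear term. Convexity of $\Phi$ (Jensen) bounds the weighted sum of $\Phi(\vx^k,\vy^k)-\Phi(x,y)$ below by $(1+\theta t)\big(\Phi(\hat{\vx}^t,\hat{\vy}^t)-\Phi(x,y)\big)$, while Lemma~\ref{property-on-H} turns the weighted sum of the linear terms $(\vw^k-w)^\top H(w)$ into exactly $(1+\theta t)(\hat{\vw}^t-w)^\top H(w)$. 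After dividing by $1+\theta t$, the left side is $\EE\big[\Phi(\hat{\vx}^t,\hat{\vy}^t)-\Phi(x,y)+(\hat{\vw}^t-w)^\top H(w)\big]$, matching the hypothesis of Lemma~\ref{lem:xy-rate}.

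The technical heart, and the step I expect to be the main obstacle, is bounding $\sum_k\EE[c^k]$ together with the residual terms $\rho_x\|\vr^k\|^2+(\lambda-\vlam^k)^\top\vr^k$ by the claimed right-hand side. Here I would: (i) trade the mixed terms $(\vx^{k+1}-x)^\top\vA^\top\vB(\vy^{k+1}-\vy^k)$ and $(\vy^k-y)^\top\vB^\top\vA(\vx^{k+1}-\vx^k)$ via the two identities of Lemma~\ref{lem:1step-ba} for telescoping $\|\cdot\|_{\vA^\top\vA}^2,\|\cdot\|_{\vB^\top\vB}^2$ boundaries, successive-difference terms $\|\vx^{k+1}-\vx^k\|_{\vA^\top\vA}^2$, $\|\vy^{k+1}-\vy^k\|_{\vB^\top\vB}^2$, and multiplier cross terms $\tfrac1\rho(\vlam^k-\vlam^{k+1})^\top(\cdot)$; (ii) expand the proximal inner products $(\vx^{k+1}-x)^\top(\hat{\vP}-\rho_x\vA^\top\vA)(\vx^{k+1}-\vx^k)$ with \eqref{uv-cross}, producing the telescoping proximal boundaries and further $\|\vx^{k+1}-\vx^k\|^2$ terms (symmetrically for $\vy$); and (iii) use \eqref{eq:update-lam}, i.e.\ $\vr^{k+1}=\tfrac1\rho(\vlam^k-\vlam^{k+1})$, with \eqref{uv-cross} to telescope all $\vlam$-dependent terms into a boundary $\propto\|\vlam^0-\lambda\|^2$. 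The decisive point is that, once the random-block expectation is taken (each difference term carrying the factor governed by $\theta=n/N=m/M$), the $(1-\theta)$ weights are accounted for, and the scaling $\rho_x=\rho/\theta$ is used, the net coefficient multiplying every successive-difference quadratic $\|\vx^{k+1}-\vx^k\|^2$ and $\|\vy^{k+1}-\vy^k\|^2$ is nonpositive and can be dropped. This is exactly what the spectral conditions \eqref{para-mat} enforce, the constant $(2-\theta)\big(\tfrac{1-\theta}{\theta^2}+1\big)\rho_x$ being the precise amount needed to dominate the combined $\vA^\top\vA$, the Lipschitz $L_f\vI$, and the weighting contributions (and symmetrically $\tfrac{2-\theta}{\theta^2}\rho_y\vB^\top\vB+L_g\vI$ for $\hat{\vQ}$). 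The alternative blockwise condition \eqref{para-mat-ij} yields the same domination after applying $\vA_I^\top\vA_I\preceq n\,\mathrm{blkdiag}_i(\vA_i^\top\vA_i)$ on the selected blocks, with the benefit of keeping the subproblems separable; moreover the $\vA^\top\vA$ (resp.\ $\vB^\top\vB$) boundaries generated by Lemma~\ref{lem:1step-ba} combine with the proximal boundaries to give the effective matrices $\tilde P=\hat P-\theta\rho_x\vA^\top\vA$ and $\hat Q$ appearing in \eqref{eq:rate-cvx}. Making all these constants line up exactly is the bookkeeping crux.

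Finally I would collect the surviving terms. With $\vlam^0=0$ the right side reduces to $\EE[h(x,y,\lambda)]$, where $h$ is $\tfrac1{1+\theta t}$ times the sum of $(1-\theta)(\Phi(\vx^0,\vy^0)-\Phi(x,y)+\rho_x\|\vr^0\|^2)$, the proximal boundaries $\tfrac12\|x^0-x\|_{\tilde P}^2+\tfrac12\|y^0-y\|_{\hat Q}^2$, and the multiplier boundary that, after the $\theta$-weighting, becomes $\tfrac1{2\rho_x}\|\lambda\|^2$. This verifies the hypothesis of Lemma~\ref{lem:xy-rate}. Applying it at $w=(x^*,y^*,\lambda)$ and taking $\sup_{\|\lambda\|\le\gamma}$ (the only $\lambda$-dependent part being $\tfrac1{2\rho_x}\|\lambda\|^2$) gives $\EE\big[\Phi(\hat{\vx}^t,\hat{\vy}^t)-\Phi(x^*,y^*)+\gamma\|A\hat{x}^t+B\hat{y}^t-b\|\big]\le\epsilon$, where $\epsilon$ is the bracketed expression in \eqref{eq:rate-cvx} with $\gamma^2$ in place of $\max\{(1+\|\lambda^*\|)^2,4\|\lambda^*\|^2\}$. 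Choosing $\gamma=\max\{1+\|\lambda^*\|,\,2\|\lambda^*\|\}$, so that $\gamma-\|\lambda^*\|\ge1$ and $\|\lambda^*\|/(\gamma-\|\lambda^*\|)\le1$ hold simultaneously, and feeding this into Lemma~\ref{equiv-rate} yields both the two-sided objective bound and the feasibility bound, which together give the stated estimate \eqref{eq:rate-cvx} at rate $O\!\big(1/(1+\theta t)\big)$.
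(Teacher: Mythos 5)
Your proposal is correct and follows essentially the same route as the paper's own proof: summing the one-step estimates of Lemma~\ref{lem:1step} into a telescoping recursion with terminal weight $1$ and interior weight $\theta$, trading the cross terms via Lemma~\ref{lem:1step-ba} and \eqref{uv-cross}, absorbing the successive-difference quadratics through Young's inequality under \eqref{para-mat} (or the blockwise bound $\vA_I^\top\vA_I\preceq n\,\mathrm{blkdiag}_i(\vA_i^\top\vA_i)$ for \eqref{para-mat-ij}), and finishing with Jensen, Lemma~\ref{property-on-H}, and Lemmas~\ref{lem:xy-rate}--\ref{equiv-rate} with $\gamma=\max\{1+\|\lambda^*\|,2\|\lambda^*\|\}$. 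All constants and boundary terms you identify (the $(1-\theta)\rho_x\|r^0\|^2$ initial term, $\tilde P=\hat P-\theta\rho_x A^\top A$, and the $\tfrac{1}{2\rho_x}\|\lambda\|^2$ multiplier boundary) match the paper's.
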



\begin{remark}
When $N=M=1$, the two conditions in \eqref{matPQhat} become the same. However, in general, neither of the two conditions in \eqref{matPQhat} implies the other one. Roughly speaking, for the case of $n\approx N$ and $m\approx M$, the one in \eqref{para-mat} can be weaker, and for the case of $n\ll N$ and $m\ll M$, the one in \eqref{para-mat-ij} is more likely weaker. In addition, \eqref{para-mat-ij} provides an explicit way to choose block diagonal $\hat{P}$ and $\hat{Q}$ by simply setting $\hat{P}_i$ and $\hat{Q}_j$'s to the lower bounds there.
\end{remark}





\section{Randomized Primal-Dual Coordinate Approach for Stochastic Programming}
\label{sbsec:sto-conv}

In this section, we extend our method to solve a stochastic optimization problem where the objective function involves an expectation. Specifically, we assume the coupled function to be in the form of $f(x)=\EE_\xi f_\xi(x)$ where $\xi$ is a random vector. For simplicity we assume $g=v=0$, namely, we consider the following problem
\begin{equation}\label{eq:main2}
\begin{aligned}
\min_\vx\;\; &\EE_\xi f_\xi(\vx)+\sum_{i=1}^N u_i(\vx_i), \\\st & \sum_{i=1}^N \vA_i\vx_i=\vb,\ \vx_i\in \Xcal_i,\, i=1,2,...,N.
\end{aligned}
\end{equation}
One can easily extend our analysis to the case where $g\neq 0, v\neq 0$ and $g$ is also stochastic. An example of \eqref{eq:main2} is the penalized and constrained regression problem \cite{james2013pcreg} that includes \eqref{prob:classo} as a special case.

Due to the expectation form of $f$, it is natural that the exact gradient of $f$ is not available or very expensive to compute. Instead, we assume that its stochastic gradient is readily accessible. By some slight abuse of the notation, we denote
\begin{equation}\label{w-H-s}
\vw=\left[\begin{array}{c}\vx\\ \vlam\end{array}\right],\quad
H(\vw)=\left[\begin{array}{c}-\vA^\top\vlam\\ \vA\vx-\vb\end{array}\right].
\end{equation}
A point $\vx^*$ is a solution to \eqref{eq:main2} \emph{if and only if} there exists $\vlam^*$ such that
\begin{subequations}\label{1st-opt-s}
\begin{align}
&F(\vx)-F(\vx^*)+(\vw-\vw^*)^\top H(\vw^*)\ge 0,\,\forall \vw,\\
&\vA\vx^*=\vb, \ \vx^*\in \Xcal.
\end{align}
\end{subequations}

Modifying Algorithm \ref{alg:rpdc} to \eqref{eq:main2}, we present the stochastic primal-dual coordinate update method of multipliers, summarized in 
Algorithm \ref{alg:srpdc}, where $G^k$ is a stochastic approximation of $\nabla f(\vx^k)$. The strategy of block coordinate update with stochastic gradient information was first proposed in \cite{dang-lan2015SBMD, xu-yin2015bsg}, which considered problems without linear constraint.

\begin{algorithm}\caption{\textbf{R}andomized \textbf{P}rimal-\textbf{D}ual \textbf{B}lock Coordinate \textbf{U}pdate Method for \textbf{S}tochastic Programming 
(RPDBUS)
}\label{alg:srpdc}
\DontPrintSemicolon
{\small
\textbf{Initialization:} choose $\vx^0, \vlam^0$ and set parameters $\rho, \alpha_k$'s\;
\For{$k=0,1,\ldots$}{
Randomly select $I_k\subset[N]$ with $|I_k|=n$ according to {\bf (U)}.\;
Let $\vx_i^{k+1}=\vx_i^{k},\,\forall i\not\in I_k$, and for $I = I_k$, do the update
\begin{align}\label{eq:supdate-x}
\vx_I^{k+1}=\argmin_{\vx_I\in \Xcal_I}\langle \vG_I^k-\vA_I^\top\vlam^k, \vx_I\rangle + u_I(\vx_I)+\frac{\rho}{2}\|\vA_I(\vx_I-\vx_I^k)+\vr^k\|^2+\frac{1}{2}\|\vx_I-\vx_I^k\|_{\vP^k+\frac{\vI}{\alpha_k}}^2.
\end{align}
Update the residual $\vr^{k+1}=\vr^k+\vA_I(\vx_I^{k+1}-\vx_I^k)$.\;
Update the multiplier by
\begin{equation}
\vlam^{k+1}=\vlam^k-\left(1-\frac{(N-n)\alpha_{k+1}}{N\alpha_k}\right)\rho\vr^{k+1}.\label{eq:supdate-lam}
\end{equation}
}
}
\end{algorithm}
\normalsize
We make the following assumption on the stochastic gradient $\vG^k$.
\begin{assumption}\label{assump-error}
Let $\vdelta^k=\vG^k-\nabla f(\vx^k)$. There exists a constant $\sigma$ such that for all $k$,
\begin{subequations}\label{ass-error1}
\begin{align}
&\EE[\vdelta^k\,|\,\vx^k]=\vzero,\label{ass-error11}\\
&\EE\|\vdelta^k\|^2\le\sigma^2.\label{ass-error12}
\end{align}
\end{subequations}
\end{assumption}

Following the proof of Lemma \ref{lem:1step} and also noting \begin{equation}\label{term6}\EE_{I_k}\big[(\vx_{I_k}-\vx_{I_k}^{k+1})^\top \vdelta_{I_k}^k\,|\,\vx^k\big]=\EE_{I_k}(\vx^k-\vx^{k+1})^\top\vdelta^k,
\end{equation}
we immediately have the following result.
\begin{lemma}[One-step analysis]
Let $\{(\vx^k,\vr^k,\vlam^k)\}$ be the sequence generated from Algorithm \ref{alg:srpdc} where $\vP^k$ is given in \eqref{matPQ} with $\rho_x=\rho$. Then
\begin{eqnarray}\label{ineq-k0-x-s}
& &\EE_{I_k}\big[F(\vx^{k+1})-F(\vx)+(\vx^{k+1}-\vx)^\top(-\vA^\top\vlam^k)+
\rho(\vx^{k+1}-\vx)^\top\vA^\top\vr^{k+1}\big]\cr
& &+\EE_{I_k}(\vx^{k+1}-\vx)^\top\left(\hat{\vP}-\rho A^\top A+\frac{\vI}{\alpha_k}\right)(\vx^{k+1}-\vx^k)
-\frac{L_f}{2}\EE_{I_k}\|\vx^k-\vx^{k+1}\|^2+\EE_{I_k}(\vx^{k+1}-\vx^k)^\top\vdelta^k\cr
&\le & \left(1-\frac{n}{N}\right)\big[F(\vx^k)-F(\vx)+(\vx^{k}-\vx)^\top(-\vA^\top\vlam^k)+
\rho(\vx^{k}-\vx)^\top\vA^\top\vr^{k}\big].
\end{eqnarray}
\end{lemma}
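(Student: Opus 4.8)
The plan is to reproduce the derivation of the $\vx$-block inequality \eqref{ineq-k1-x} in Lemma~\ref{lem:1step}, carrying along the two features that distinguish Algorithm~\ref{alg:srpdc} from Algorithm~\ref{alg:rpdc}: the inexact gradient $\vG^k=\nabla f(\vx^k)+\vdelta^k$ in place of $\nabla f(\vx^k)$, and the enlarged proximal matrix $\vP^k+\frac{\vI}{\alpha_k}$. First I would write the first-order optimality condition of the prox-linear subproblem \eqref{eq:supdate-x}: there is a subgradient $s\in\partial u_{I_k}(\vx_{I_k}^{k+1})$ so that, for every $\vx_{I_k}\in\cX_{I_k}$,
\[
\ip{\vG_{I_k}^k-\vA_{I_k}^\top\vlam^k+\rho\vA_{I_k}^\top\vr^{k+1}+\Big(\vP^k+\frac{\vI}{\alpha_k}\Big)(\vx_{I_k}^{k+1}-\vx_{I_k}^k)+s}{\vx_{I_k}-\vx_{I_k}^{k+1}}\ge 0,
\]
where I have used $\vr^{k+1}=\vr^k+\vA_{I_k}(\vx_{I_k}^{k+1}-\vx_{I_k}^k)$ to rewrite the gradient of the augmented quadratic as $\rho\vA_{I_k}^\top\vr^{k+1}$. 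Splitting $\vG_{I_k}^k=\nabla_{I_k}f(\vx^k)+\vdelta_{I_k}^k$ isolates the single new term $\ip{\vdelta_{I_k}^k}{\vx_{I_k}-\vx_{I_k}^{k+1}}$; every other term is exactly the one appearing in the deterministic analysis with $\rho_x$ set to $\rho$.

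Next I would turn this variational inequality into a bound on $F(\vx^{k+1})-F(\vx)$. Convexity of $u_{I_k}$ converts $\ip{s}{\vx_{I_k}^{k+1}-\vx_{I_k}}$ into $u_{I_k}(\vx_{I_k}^{k+1})-u_{I_k}(\vx_{I_k})$, which (since only the $I_k$ blocks move) accounts for $u(\vx^{k+1})-u(\vx)$. For the smooth part I combine the block descent lemma, which by Assumption~\ref{assump3} gives $f(\vx^{k+1})\le f(\vx^k)+\ip{\nabla_{I_k}f(\vx^k)}{\vx_{I_k}^{k+1}-\vx_{I_k}^k}+\frac{L_f}{2}\|\vx^{k+1}-\vx^k\|^2$, with the convexity estimate $f(\vx^k)+\ip{\nabla f(\vx^k)}{\vx-\vx^k}\le f(\vx)$ from Assumption~\ref{assump1}. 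Together these replace the exact-gradient inner products by $F(\vx^{k+1})-F(\vx)$ plus the $\frac{L_f}{2}\|\vx^{k+1}-\vx^k\|^2$ slack, reproducing realization by realization the integrand of \eqref{ineq-k1-x} specialized to $B=0$ and $\rho_x=\rho$, now augmented additively by the proximal rider $\frac{\vI}{\alpha_k}$ and by the stochastic cross term $\ip{\vdelta_{I_k}^k}{\vx_{I_k}-\vx_{I_k}^{k+1}}$.

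Finally I would take the conditional expectation $\EE_{I_k}$ over the uniformly sampled set $I_k$. The averaging identity that produces the factor $1-\frac{n}{N}$ on the right-hand side of \eqref{ineq-k1-x} is unaffected by the choice of gradient oracle, so it applies verbatim; the proximal and augmented-quadratic terms combine as in Lemma~\ref{lem:1step} into the matrix $\hat{\vP}-\rho A^\top A+\frac{\vI}{\alpha_k}$. The genuinely new contribution is $\EE_{I_k}\ip{\vdelta_{I_k}^k}{\vx_{I_k}-\vx_{I_k}^{k+1}}$, which I rewrite via \eqref{term6} as $\EE_{I_k}(\vx^k-\vx^{k+1})^\top\vdelta^k$; moving it across yields the stated $+\EE_{I_k}(\vx^{k+1}-\vx^k)^\top\vdelta^k$. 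Unlike the deterministic case I deliberately retain $\vlam^k$ and the coupling $\rho\vr^{k+1}$ rather than substituting the dual step, because Algorithm~\ref{alg:srpdc} uses the modified update \eqref{eq:supdate-lam}, and that substitution is deferred to the proof of the main stochastic theorem. The only delicate point is the use of \eqref{term6}: it relies on $\vx^{k+1}-\vx^k$ being supported on $I_k$ together with the conditional independence of $I_k$ and $\vdelta^k$ and the zero-mean property \eqref{ass-error11}, so that the contribution of the fixed comparison point $\vx$ averages to zero. Everything else is the bookkeeping already carried out in Lemma~\ref{lem:1step}.
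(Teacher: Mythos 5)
Your proposal is correct and follows essentially the same route as the paper: the paper proves this lemma by simply invoking the proof of Lemma~\ref{lem:1step} (stopping at the intermediate inequality \eqref{ineq-k0-x} before the dual-update substitution, with $B=0$ and $\rho_x=\rho$) together with the identity \eqref{term6} to absorb the stochastic error term, which is exactly the argument you spell out. Your added remarks on why \eqref{term6} holds and why the $\vlam^{k+1}$ substitution is deferred are accurate and merely make explicit what the paper leaves implicit.
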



The following theorem is a key result, from which we can choose appropriate $\alpha_k$ to obtain the $O(1/\sqrt{t})$ convergence rate. 

\begin{theorem}\label{thm-s-vx}
Let $\{(\vx^k,\vlam^k)\}$ be the sequence generated from Algorithm \ref{alg:srpdc}.
Let $\theta=\frac{n}{N}$ and denote $$\beta_k=\frac{\alpha_k}{\left(1-\frac{\alpha_{k}(1-\theta)}{\alpha_{k-1}}\right)\rho},\forall k.$$
Assume $\alpha_k>0$ is nonincreasing, and
\begin{subequations}\label{bd-x-s}
\begin{align}
&Ax^0=b, \quad \lambda^0=0,\\
&\hat{\vP}\succeq L_f\vI+\rho A^\top A,\label{bd-x-s-P}\\
&\frac{\alpha_{k-1}\beta_k}{2\alpha_k}+\frac{(1-\theta)\beta_{k+1}}{2}-\frac{\alpha_k\beta_{k+1}}{2\alpha_{k+1}}-\frac{(1-\theta)\beta_k}{2}\ge 0,\,\forall k\label{bd-x-s1}\\
&\frac{\alpha_t}{2\rho}\ge\left|\frac{\alpha_{t-1}\beta_t}{\alpha_t}-(1-\theta)\beta_t-\frac{\alpha_t}{\rho}\right|,\text{ for some }t.\label{bd-x-s2}
\end{align}
\end{subequations}
Let
\begin{equation}\label{avg-pt-3}\hat{\vx}^{t}=\frac{\alpha_{t+1}\vx^{t+1}+\theta\sum_{k=1}^t\alpha_k\vx^k}{\alpha_{t+1}+\theta\sum\limits_{k=1}^t\alpha_k}. \quad 
\end{equation}
Then, under Assumptions~\ref{assump1}, \ref{assump2}, \ref{assump3} and \ref{assump-error}, we have
\begin{eqnarray}\label{rate-x-s}
&&(\alpha_{t+1}+\theta\sum\limits_{k=1}^t\alpha_k)\EE\left[F(\hat{\vx}^{t})-F(\vx^*)+\gamma\|A\hat{x}^t-b\|\right]\cr
&\le & (1-\theta)\alpha_0\left[F(\vx^0)-F(\vx^*)\right]+\frac{\alpha_0}{2}\|\vx^{0}-\vx^*\|_{\hat{\vP}-\rho A^\top A}^2+\frac{1}{2}\|\vx^0-\vx^*\|^2\cr
&&+\left|\frac{\alpha_0\beta_1}{2\alpha_1}-\frac{(1-\theta)\beta_1}{2}\right|\gamma^2+\sum_{k=0}^t\frac{\alpha_k^2}{2}\EE\|\vdelta^k\|^2.
\end{eqnarray}
\end{theorem}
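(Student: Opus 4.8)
The plan is to convert the one-step estimate \eqref{ineq-k0-x-s} into a telescoped bound on the weighted running average \eqref{avg-pt-3} and then invoke Lemma \ref{lem:xy-rate}. First I would apply \eqref{ineq-k0-x-s} with the comparison point taken to be an arbitrary \emph{feasible} $x$ (so $Ax=b$ and the residual $r:=Ax-b=0$), multiply the inequality by $\alpha_k$, take total expectation, and sum over $k=0,\dots,t$. Since $r^{k+1}=Ax^{k+1}-b$, the two coupling terms collapse to $(x^{k+1}-x)^\top(-A^\top\lambda^k)=-(\lambda^k)^\top r^{k+1}$ and $\rho(x^{k+1}-x)^\top A^\top r^{k+1}=\rho\|r^{k+1}\|^2$. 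Writing $\lambda^k=(\lambda^k-\lambda)+\lambda$ isolates the clean term $-\lambda^\top r^{k+1}$, which is exactly the contribution $(\hat w-w)^\top H(w)$ demanded by Lemma \ref{lem:xy-rate} under the notation \eqref{w-H-s}, and leaves a dual residual $-(\lambda^k-\lambda)^\top r^{k+1}+\rho\|r^{k+1}\|^2$ to be telescoped.

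Next I would use the multiplier recursion \eqref{eq:supdate-lam}, i.e.\ $\rho r^{k+1}=\tfrac{1}{c_k}(\lambda^k-\lambda^{k+1})$ with $c_k=1-\tfrac{(1-\theta)\alpha_{k+1}}{\alpha_k}$, so that $\tfrac{1}{c_k\rho}=\tfrac{\beta_{k+1}}{\alpha_{k+1}}$, to rewrite the residual in terms of $\lambda^k-\lambda^{k+1}$ and $\lambda^k-\lambda$. Applying the polarization identity \eqref{uv-cross} to $(\lambda^k-\lambda)^\top(\lambda^k-\lambda^{k+1})$ produces telescoping differences $\|\lambda^k-\lambda\|^2-\|\lambda^{k+1}-\lambda\|^2$ carrying the weights $\beta_k$, plus self terms in $\|\lambda^k-\lambda^{k+1}\|^2$. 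Summing and regrouping by Abel summation, the coefficient attached to each interior $\|\lambda^k-\lambda\|^2$ matches the left-hand side of \eqref{bd-x-s1}, whose nonnegativity lets me discard those terms, while the terminal coefficient is tamed by \eqref{bd-x-s2}; since $\lambda^0=0$, the only surviving dual boundary term is a multiple of $\|\lambda\|^2$, and its supremum over $\|\lambda\|\le\gamma$ yields the $\big|\tfrac{\alpha_0\beta_1}{2\alpha_1}-\tfrac{(1-\theta)\beta_1}{2}\big|\gamma^2$ term of \eqref{rate-x-s}.

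In parallel I would telescope the proximal and stochastic terms. The prox weight $\hat P-\rho A^\top A+\tfrac{I}{\alpha_k}$ splits by \eqref{uv-cross} into differences $\|x^{k+1}-x\|^2-\|x^k-x\|^2$ (the identity part, after multiplying by $\alpha_k$, collapses to the boundary $\tfrac12\|x^0-x^*\|^2$ since $\alpha_0\cdot\tfrac1{\alpha_0}=1$, and the $\hat P-\rho A^\top A$ part yields $\tfrac{\alpha_0}{2}\|x^0-x^*\|^2_{\hat P-\rho A^\top A}$ after an Abel step using monotonicity of $\{\alpha_k\}$) plus self terms $\|x^{k+1}-x^k\|^2$; condition \eqref{bd-x-s-P} gives $\hat P-\rho A^\top A\succeq L_fI$, so that self term dominates $-\tfrac{L_f}{2}\|x^{k+1}-x^k\|^2$ and is dropped. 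For the noise I would use Young's inequality $\alpha_k(x^{k+1}-x^k)^\top\delta^k\ge-\tfrac12\|x^{k+1}-x^k\|^2-\tfrac{\alpha_k^2}{2}\|\delta^k\|^2$: the quadratic cancels exactly against the $\tfrac12\|x^{k+1}-x^k\|^2$ from the identity part of the prox term, leaving $\sum_k\tfrac{\alpha_k^2}{2}\EE\|\delta^k\|^2$, which is the last term of \eqref{rate-x-s} via \eqref{ass-error12} (the mean-zero property having already been consumed in deriving \eqref{ineq-k0-x-s}). Finally, the factor $(1-\theta)$ on the right of \eqref{ineq-k0-x-s} is tailored to the $\theta$-weighting in \eqref{avg-pt-3}, so that after regrouping the per-iterate gaps $F(x^k)-F(x)+(w^k-w)^\top H(w^k)$ reappear with the averaging weights, with boundary contributions producing the $(1-\theta)\alpha_0(F(x^0)-F(x^*))$ term at $k=0$ and the $\alpha_{t+1}$-weight at $k=t+1$. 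Convexity of $F$ and of $x\mapsto-\lambda^\top Ax$, together with Lemma \ref{property-on-H} to linearize $H$ at the average, let me pass by Jensen to $F(\hat x^t)-F(x)+(\hat w^t-w)^\top H(w)$, putting the estimate in the form \eqref{eq:xy-phi}; applying Lemma \ref{lem:xy-rate} with $x=x^*$ and taking $\sup_{\|\lambda\|\le\gamma}$ gives \eqref{rate-x-s}.

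I expect the main obstacle to be the coupled dual-and-step-size bookkeeping of the second step: one must arrange the $\beta_k$-weighted polarization so that the accumulated $\|\lambda^k-\lambda\|^2$ coefficients have a single sign (this is precisely the role of \eqref{bd-x-s1}) and a terminal coefficient dominated by \eqref{bd-x-s2}, while simultaneously the monotone-but-nonconstant schedule $\{\alpha_k\}$ must still reproduce the clean averaging weights of \eqref{avg-pt-3}. Checking that these requirements are mutually consistent, and that every discarded term is genuinely of the right sign, is the delicate part; by contrast, the proximal and stochastic estimates are routine once the $\tfrac{I}{\alpha_k}$ term is earmarked to absorb the gradient-noise cross term.
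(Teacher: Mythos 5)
Your proposal is correct and follows essentially the same route as the paper's proof: the $\alpha_k$-weighted summation of \eqref{ineq-k0-x-s}, the $\beta_k$-weighted polarization of the multiplier recursion with Abel summation (where \eqref{bd-x-s1} kills the interior $\|\lambda^k-\lambda\|^2$ coefficients and \eqref{bd-x-s2} controls the terminal one), the cancellation of the gradient-noise cross term against the $\tfrac{I}{\alpha_k}$ proximal weight via Young's inequality, absorption of the $L_f$ terms through \eqref{bd-x-s-P}, and the final passage through convexity, Lemma \ref{property-on-H} and Lemma \ref{lem:xy-rate}. The only cosmetic difference is that you collapse the coupling terms using feasibility of $x$ up front rather than carrying the $H(w)$ structure and the auxiliary full-step multiplier $\tilde{\lambda}^{t+1}$ explicitly, but the bookkeeping is equivalent.
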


The following proposition gives sublinear convergence rate of Algorithm \ref{alg:srpdc} by specifying the values of its parameters. 
The choice of $\alpha_k$ depends on whether we fix the total number of iterations.

\begin{proposition}\label{prop:rate-s}
Let $\{(\vx^k,\vlam^k)\}$ be the sequence generated from Algorithm \ref{alg:srpdc} with $\vP^k$ given in \eqref{matPQ}, $\hat{\vP}$ satisfying \eqref{bd-x-s-P}, and the initial point satisfying $\vA\vx^0=\vb$ and $\vlam^0=\vzero$. Let $C_0$ be
\begin{align}\label{eq:rate-s-c}
C_0=(1-\theta)\alpha_0\big[F(x^0)-F(x^*)\big]+\frac{1}{2}\|x^0-x^*\|_{D_x}^2+\frac{\alpha_0}{2\rho}\max\{(1+\|\lambda^*\|)^2, 4\|\lambda^*\|^2\},
\end{align}
where $(x^*,\lambda^*)$ is a primal-dual solution, and
$\vD_x:=\alpha_0(\hat{\vP}-\rho A^\top A)+\vI$.

\begin{enumerate}
\item If $\alpha_k=\frac{\alpha_0}{\sqrt{k}},\forall k\ge1$ for a certain $\alpha_0>0$, then for $t\ge 2$,
{ 
\begin{equation}\label{rate-sqrtk}
\max\left\{\left|\EE[F(\hat{\vx}^{t})-F(\vx^*)]\right|,\, \EE\|A\hat{x}^t-b\|\right\}\le \frac{C_0}{\theta\alpha_0\sqrt{t}}+\frac{\alpha_0(\log t+2)\sigma^2}{2\theta\sqrt{t}}.
\end{equation}
}

\item If the number of maximum number of iteration is fixed a priori, 
then by choosing $\alpha_k=\frac{\alpha_0}{\sqrt{t}},\,\forall k\ge 1$ with any given $\alpha_0>0$, we have
{ 
\begin{equation}\label{rate-sqrtk-2}
\max\left\{\left|\EE[F(\hat{\vx}^{t})-F(\vx^*)]\right|,\, \EE\|A\hat{x}^t-b\|\right\}\le\frac{C_0}{\theta\alpha_0\sqrt{t}}+\frac{\alpha_0\sigma^2}{\theta\sqrt{t}}.
\end{equation}
}
\end{enumerate}
\end{proposition}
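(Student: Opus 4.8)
The plan is to obtain both rates as direct corollaries of the master estimate \eqref{rate-x-s} in Theorem~\ref{thm-s-vx}, so the first step is to check that each of the two stepsize schedules meets that theorem's hypotheses \eqref{bd-x-s1}--\eqref{bd-x-s2}. Monotonicity of $\alpha_k$ is immediate in both cases: in Part~1 the sequence is $\alpha_0=\alpha_1\ge\alpha_2\ge\cdots$, and in Part~2 it is $\alpha_0\ge\alpha_0/\sqrt t=\alpha_1=\alpha_2=\cdots$. I would then compute $\beta_k$ from its definition. For the constant schedule of Part~2 one finds $\beta_k=\frac{\alpha_0}{\theta\rho\sqrt t}$ for all $k\ge 2$, so the left side of \eqref{bd-x-s1} collapses to $0$ for $k\ge 2$ and needs only a short boundary check at $k=1$ (where it equals $\frac{\alpha_0}{2\rho}(1-1/\sqrt t)\ge 0$), and the bracket in \eqref{bd-x-s2} vanishes identically. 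For the decreasing schedule of Part~1 the ratio $\alpha_k/\alpha_{k-1}=\sqrt{(k-1)/k}$ makes $\beta_k$ genuinely $k$-dependent, so \eqref{bd-x-s1} and \eqref{bd-x-s2} become nontrivial algebraic inequalities in $k$; verifying \eqref{bd-x-s1} there is the crux and I expect it to be the main obstacle.

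With the hypotheses in force, the second step is to simplify the right-hand side of \eqref{rate-x-s}. Using $\alpha_1=\alpha_0$ (Part~1) or the explicit $\beta_1$ (Part~2), a short computation gives in both cases $\bigl|\frac{\alpha_0\beta_1}{2\alpha_1}-\frac{(1-\theta)\beta_1}{2}\bigr|=\frac{\alpha_0}{2\rho}$, so the $\gamma$-dependent term is exactly $\frac{\alpha_0}{2\rho}\gamma^2$. The $\gamma$-independent terms combine, through $D_x=\alpha_0(\hat P-\rho A^\top A)+I$, into $(1-\theta)\alpha_0[F(x^0)-F(x^*)]+\frac12\|x^0-x^*\|_{D_x}^2$, that is, into $C_0-\frac{\alpha_0}{2\rho}\max\{(1+\|\lambda^*\|)^2,4\|\lambda^*\|^2\}$. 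The stochastic term is bounded via Assumption~\ref{assump-error} by $\frac{\sigma^2}{2}\sum_{k=0}^t\alpha_k^2$; for Part~1 the harmonic estimate $\sum_{k=1}^t 1/k\le 1+\log t$ yields $\sum_{k=0}^t\alpha_k^2\le\alpha_0^2(\log t+2)$, while for Part~2 one has exactly $\sum_{k=0}^t\alpha_k^2=2\alpha_0^2$. Finally the averaging weight is lower-bounded by $\alpha_{t+1}+\theta\sum_{k=1}^t\alpha_k\ge\theta\alpha_0\sqrt t$, using $\sum_{k=1}^t 1/\sqrt k\ge\sqrt t$ in Part~1 and the identity $\theta\sum_{k=1}^t\alpha_0/\sqrt t=\theta\alpha_0\sqrt t$ in Part~2.

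The third step converts the $\gamma$-weighted estimate into the stated maximum form. Dividing \eqref{rate-x-s} by the averaging weight gives, for every admissible $\gamma$, a bound $\EE[F(\hat x^t)-F(x^*)+\gamma\|A\hat x^t-b\|]\le\epsilon(\gamma)$ with $\epsilon(\gamma)=\bigl(C_0-\frac{\alpha_0}{2\rho}\max\{\cdots\}+\frac{\alpha_0}{2\rho}\gamma^2+\text{noise}\bigr)/(\theta\alpha_0\sqrt t)$. Choosing $\gamma=1+\|\lambda^*\|$ makes $\frac{\alpha_0}{2\rho}\gamma^2\le\frac{\alpha_0}{2\rho}\max\{\cdots\}$, so $\epsilon\le(C_0+\text{noise})/(\theta\alpha_0\sqrt t)$; Lemma~\ref{equiv-rate} with $\gamma-\|\lambda^*\|=1$ then delivers both $\EE\|A\hat x^t-b\|\le\epsilon$ and the objective upper bound $\EE[F(\hat x^t)-F(x^*)]\le\epsilon$. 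For the matching lower bound I would invoke Lemma~\ref{lem:lb-obj}, i.e. $\EE[F(\hat x^t)-F(x^*)]\ge-\|\lambda^*\|\,\EE\|A\hat x^t-b\|$, and control the right side by taking $\gamma=2\|\lambda^*\|$ (so $\gamma-\|\lambda^*\|=\|\lambda^*\|$ and $\gamma^2=4\|\lambda^*\|^2\le\max\{\cdots\}$), which gives $\|\lambda^*\|\,\EE\|A\hat x^t-b\|\le\epsilon$; the case $\|\lambda^*\|=0$ is trivial. These two choices of $\gamma$ are exactly what produce the two arguments of the maximum, and together they show that the left-hand side of \eqref{rate-sqrtk} (resp. \eqref{rate-sqrtk-2}) is at most $(C_0+\text{noise})/(\theta\alpha_0\sqrt t)$, which equals the claimed right-hand side once the noise bound of the relevant case is substituted.

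In summary, the only substantive point is the verification of \eqref{bd-x-s1}--\eqref{bd-x-s2} for the decreasing schedule $\alpha_k=\alpha_0/\sqrt k$ of Part~1; once those hold, everything reduces to bookkeeping on the already-established inequality \eqref{rate-x-s} combined with the elementary Lemmas~\ref{equiv-rate} and~\ref{lem:lb-obj}.
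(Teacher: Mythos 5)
Your architecture is exactly the paper's: check the hypotheses of Theorem~\ref{thm-s-vx} for each stepsize schedule, simplify the right-hand side of \eqref{rate-x-s}, and convert to the stated maximum via Lemmas~\ref{lem:lb-obj} and \ref{equiv-rate}. All of the bookkeeping you carry out is correct and matches the paper: $\bigl|\frac{\alpha_0\beta_1}{2\alpha_1}-\frac{(1-\theta)\beta_1}{2}\bigr|=\frac{\alpha_0}{2\rho}$ in both cases, the deterministic terms assemble into $C_0$ through $D_x=\alpha_0(\hat P-\rho A^\top A)+I$, $\sum_{k=0}^t\alpha_k^2\le\alpha_0^2(\log t+2)$ resp.\ $=2\alpha_0^2$, and the averaging weight is at least $\theta\alpha_0\sqrt t$. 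The paper folds your two choices of $\gamma$ into the single choice $\gamma=\max\{1+\|\lambda^*\|,\,2\|\lambda^*\|\}$, which is only cosmetic, and your boundary check of \eqref{bd-x-s1} at $k=1$ for the constant schedule is, if anything, more careful than the paper's remark that the left side is ``zero.''

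The genuine gap is the step you yourself flag as ``the main obstacle'' and then do not carry out: verifying \eqref{bd-x-s1} and \eqref{bd-x-s2} for $\alpha_k=\alpha_0/\sqrt k$. Without this, Theorem~\ref{thm-s-vx} cannot be invoked in Part~1, so the first bound is unproved as written. This verification is exactly the content of Appendix~\ref{app:bd-x-s}: with this schedule $\beta_k=\frac{\alpha_0}{\rho(\sqrt k-(1-\theta)\sqrt{k-1})}$, and the left side of \eqref{bd-x-s1} equals $\frac{\alpha_0}{2\rho}\big[\psi(k)-\psi(k+1)\big]$ where
\[
\psi(k)=\Bigl(\tfrac{\sqrt k}{\sqrt{k-1}}-(1-\theta)\Bigr)\frac{1}{\sqrt k-(1-\theta)\sqrt{k-1}};
\]
differentiating gives $\psi'(k)=\frac{(1-\theta)-\sqrt{k}/\sqrt{k-1}}{2(k-1)(\sqrt k-(1-\theta)\sqrt{k-1})}<0$, so $\psi$ is decreasing and \eqref{bd-x-s1} holds, while \eqref{bd-x-s2} reduces after cancellation to $\frac12\ge\frac{\sqrt t}{\sqrt{t-1}}-1$, i.e.\ $t\ge\frac95$ --- which is precisely where the restriction $t\ge 2$ in \eqref{rate-sqrtk} comes from. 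You need to supply this (or an equivalent) computation; everything else in your write-up is sound.
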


\begin{proof}
When $\alpha_k=\frac{\alpha_0}{\sqrt{k}}$, we can show that \eqref{bd-x-s1} and \eqref{bd-x-s2} hold for $t\ge 2$; see Appendix \ref{app:bd-x-s}. Hence, the result in \eqref{rate-sqrtk} follows from \eqref{rate-x-s}, the convexity of $F$, Lemma \ref{equiv-rate} with $\gamma=\max\{1+\|\lambda^*\|, 2\|\lambda^*\|\}$, and the inequalities
$$\sum_{k=1}^t\frac{1}{\sqrt{k}}\ge \sqrt{t},\quad \sum_{k=1}^t\frac{1}{k}\le \log t +1.$$
When $\alpha_k$ is a constant, the terms on the left hand side of \eqref{bd-x-s1} and on the right hand side of \eqref{bd-x-s2} are both zero, so they are satisfied. Hence, the result in \eqref{rate-sqrtk-2} immediately follows by noting $\sum_{k=1}^t \alpha_k=\alpha_0\sqrt{t}$ and $\sum_{k=0}^t \alpha_k^2\le 2\alpha_0^2$.
\hfill\end{proof}

The sublinear convergence result of Algorithm \ref{alg:srpdc} can also be shown if $f$ is nondifferentiable convex and Lipschitz continuous. Indeed, if $f$ is Lipschtiz continuous with constant $L_c$, i.e., $$\|f(\vx)-f(\vy)\|\le L_c\|\vx-\vy\|,\,\forall \vx, \vy,$$
then $\|\tilde{\nabla} f(\vx)\|\le L_c,\,\forall \vx$, where $\tilde{\nabla}f(\vx)$ is a subgradient of $f$ at $\vx$. Hence,
\begin{eqnarray*}
& &\EE_{I_k}(\vx_{I_k}-\vx_{I_k}^{k+1})^\top\tilde{\nabla}_{I_k}f(\vx^k)\cr
&=& \EE_{I_k}(\vx_{I_k}-\vx_{I_k}^k)^\top\tilde{\nabla}_{I_k}f(\vx^k) + \EE_{I_k}(\vx_{I_k}^k-\vx_{I_k}^{k+1})^\top\tilde{\nabla}_{I_k}f(\vx^k)\cr
&=&\frac{n}{N}(\vx-\vx^k)^\top \tilde{\nabla} f(\vx^k) + \EE_{I_k}(\vx^k-\vx^{k+1})^\top\tilde{\nabla} f(\vx^{k+1})+\EE_{I_k}(\vx^k-\vx^{k+1})^\top\big(\tilde{\nabla}f(\vx^k)-\tilde{\nabla} f(\vx^{k+1})\big)\\
&\le & \frac{n}{N}(f(\vx)-f(\vx^k)) + \EE_{I_k}[f(\vx^k)-f(\vx^{k+1})]+\EE_{I_k}(\vx^k-\vx^{k+1})^\top\big(\tilde{\nabla}f(\vx^k)-\tilde{\nabla} f(\vx^{k+1})\big)\cr
&=&\frac{n-N}{N}(f(\vx)-f(\vx^k))+\EE_{I_k}[f(\vx)-f(\vx^{k+1})]+\EE_{I_k}(\vx^k-\vx^{k+1})^\top\big(\tilde{\nabla}f(\vx^k)-\tilde{\nabla} f(\vx^{k+1})\big).
\end{eqnarray*}
Now following the proof of Lemma \ref{lem:1step}, we can have a result similar to \eqref{ineq-k0-x-s}, and then through the same arguments as those in the proof of Theorem \ref{thm-s-vx}, we can establish sublinear convergence rate of $O(1/\sqrt{t})$.

{ 
\section{Numerical Experiments}\label{sec:numerical}
In this section, we test the proposed randomized primal-dual method on solving the nonnegativity constrained quadratic programming (NCQP):
\begin{equation}\label{eq:ncqp}
\min_{x\in\RR^n} F(x)\equiv\frac{1}{2}x^\top Q x + c^\top x, \st Ax=b, x_i\ge 0, i=1,\ldots, n,
\end{equation}
where $A\in\RR^{m\times n}$, and $Q\in\RR^{n\times n}$ is a symmetric positive semidefinite (PSD) matrix. There is no $y$-variable, and it falls into the case in Theorem \ref{thm:rate-3X}. We perform two experiments on a Macbook Pro with 4 cores. The first experiment demonstrates the parallelization performance of the proposed method, and the second one compares it to other methods. 


\textbf{Parallelization.} This test is to illustrate the power unleashed in our new method, which is flexible in terms of parallel and distributive computing. We set $m = 200, n=2000$ and generate $Q=HH^\top$, where the components of $H\in \RR^{n\times n}$ follow the standard Gaussian distribution. The matrix $A$ and vectors $b, c$ are also randomly generated. We treat every component of $x$ as one block, and at every iteration we select and update $p$ blocks, where $p$ is the number of used cores. Figure \ref{parallel-comp} shows the running time by using 1, 2, and 4 cores, where the optimal value $F(x^*)$ is obtained by calling Matlab function \verb|quadprog| with tolerance $10^{-16}$. From the figure, we see that our proposed method achieves nearly linear speed-up.

\begin{figure}
\centering \subfigure{\includegraphics[scale=0.45]{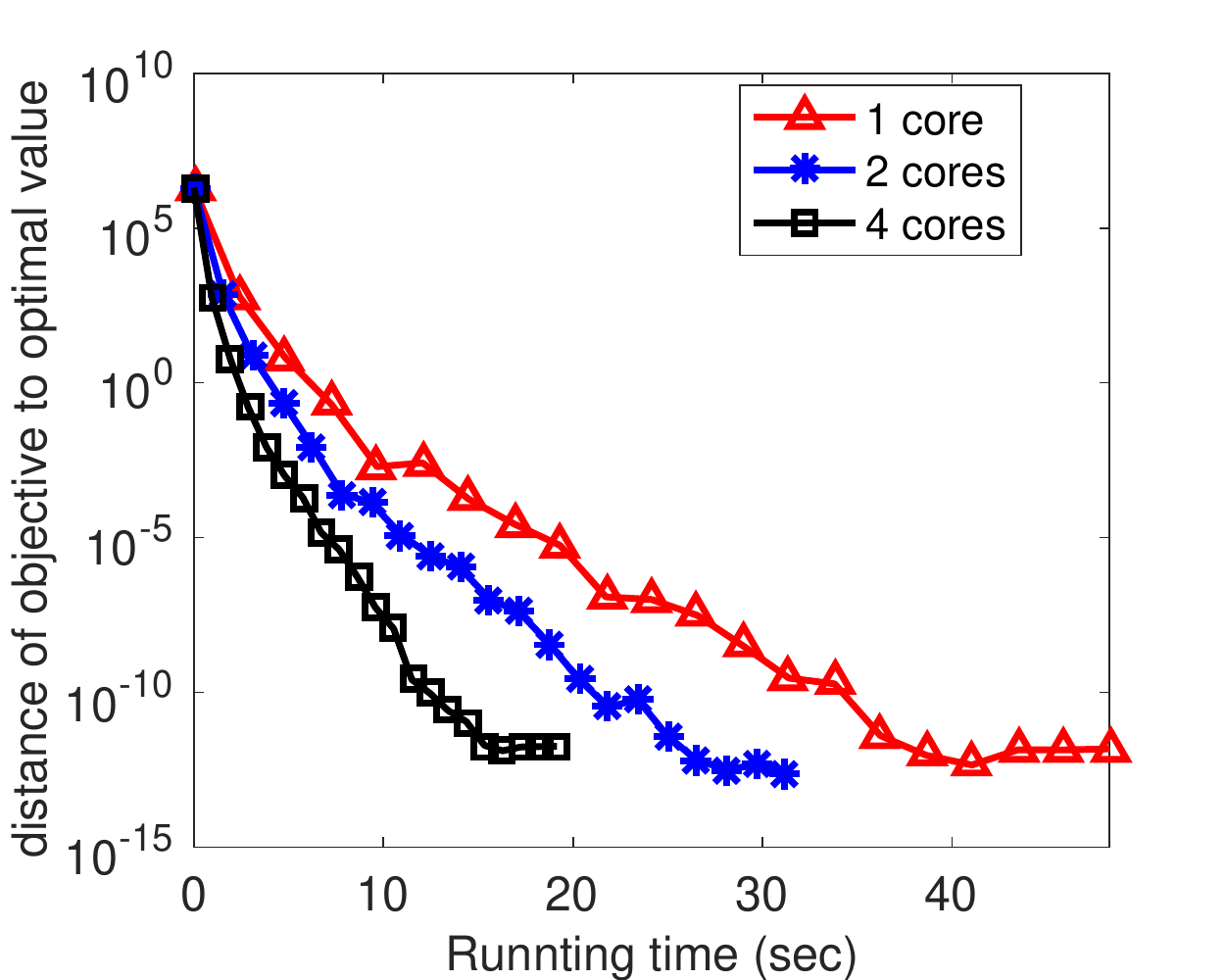}}
\centering \subfigure{\includegraphics[scale=0.45]{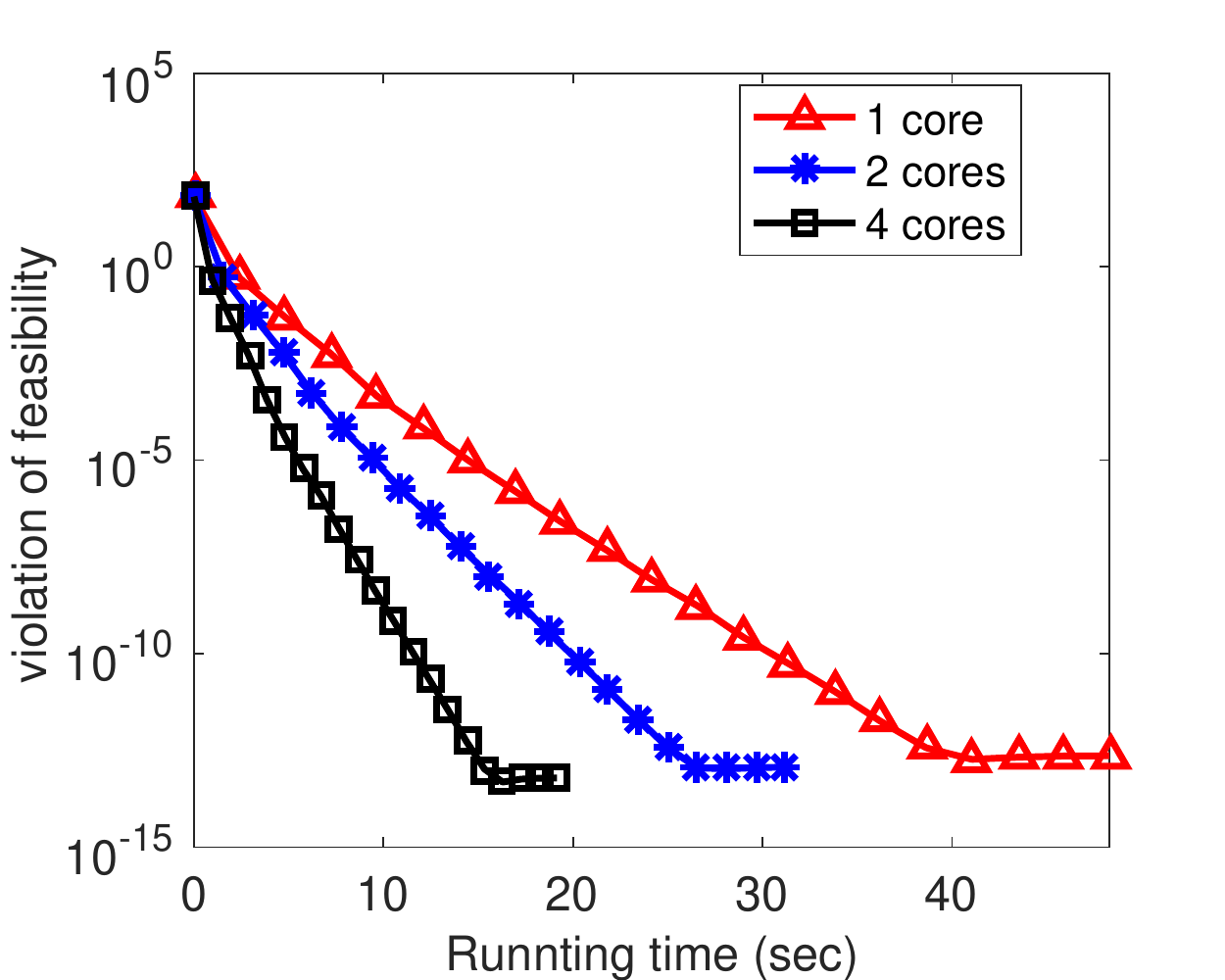}}
\caption{Nearly linear speed-up performance of the proposed primal-dual method for solving \eqref{eq:ncqp} on a 4-core machine. Left: distance of objective to optimal value $|F(x^k)-F(x^*)|$; Right: violation of feasibility $\|Ax^k-b\|$.}
\label{parallel-comp}
\end{figure}

{\bf Comparison to other methods.} In this experiment, we compare the proposed method to the linearized ALM and the cyclic linearized ADMM methods. We set $m = 1000, n=5000$ and generate $Q=HH^\top$, where the components of $H\in\RR^{n\times (n-50)}$ follow standard Gaussian distribution. Note that $Q$ is singular, and thus \eqref{eq:ncqp} is not strongly convex. We partition the variable into 100 blocks, each with 50 components. At each iteration of our method, we randomly select one block variable to update. Figure \ref{random-QP} shows the performance by the three compared methods, where one epoch is equivalent to updating 100 blocks once. From the figure, we see that our proposed method is comparable to the cyclic linearized ADMM and significantly better than the linearized ALM. Although the cyclic ADMM performs well on this example, in general it can diverge if the problem has more than two blocks; see \cite{chen2013direct}.

\begin{figure}
\centering \subfigure{\includegraphics[scale=0.45]{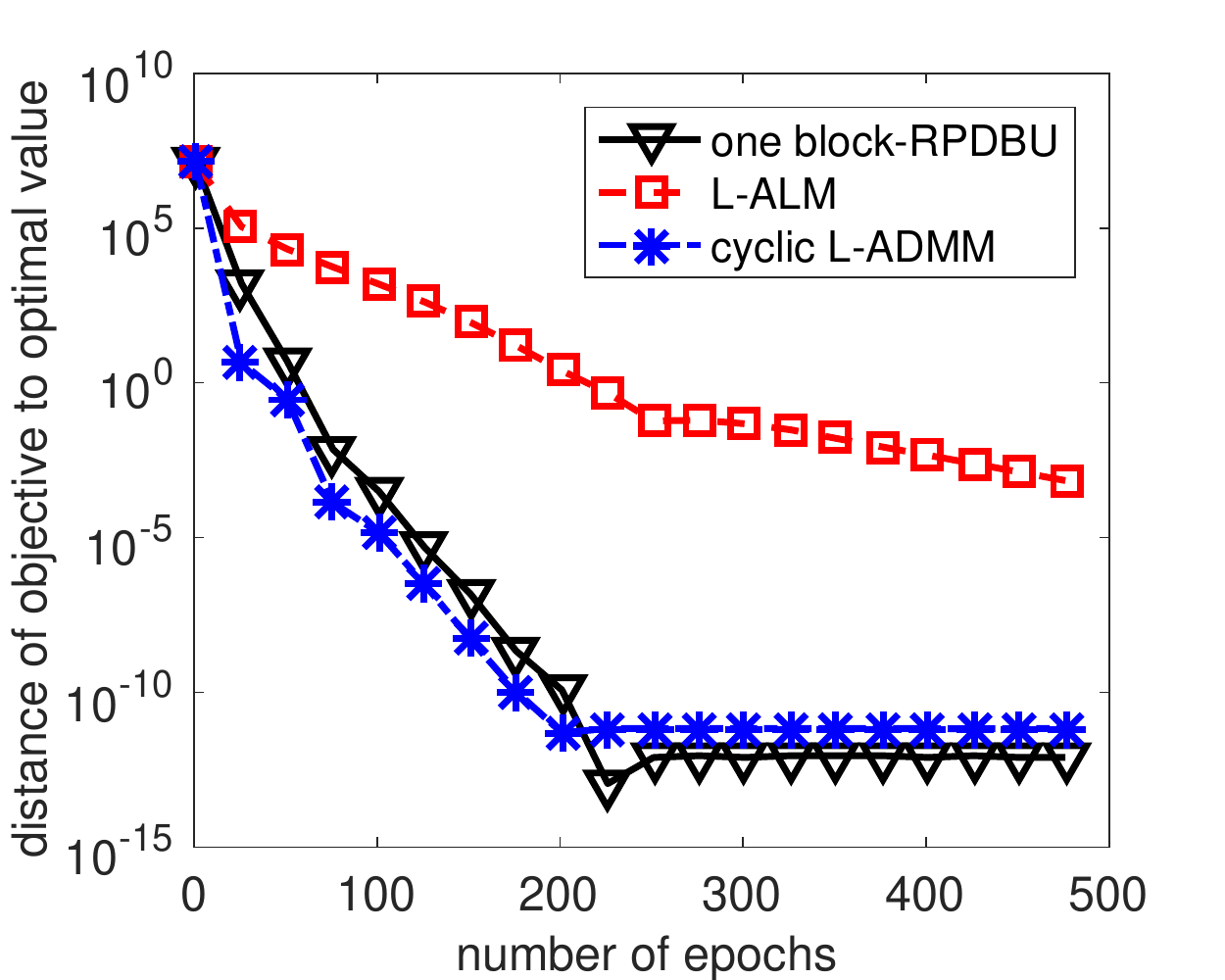}}
\centering \subfigure{\includegraphics[scale=0.45]{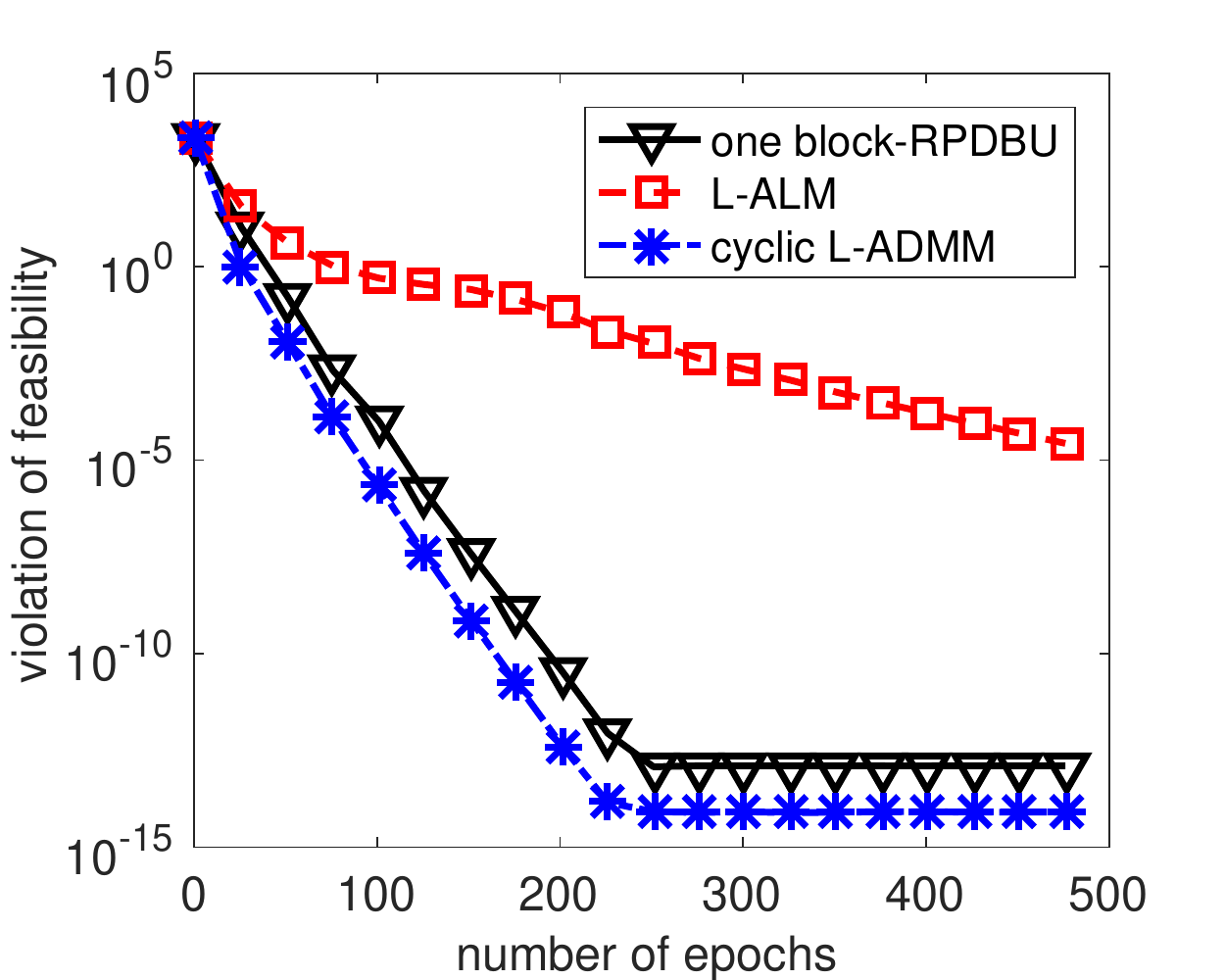}}
\caption{Comparison of the proposed method (RPDBU) to the linearized augmented Lagrangian method (L-ALM) and the cyclic linearized alternating direction method of multipliers (L-ADMM) on solving the nonnegativity constrained quadratic programming \eqref{eq:ncqp}. Left: distance of objective to optimal value $|F(x^k)-F(x^*)|$; Right: violation of feasibility $\|Ax^k-b\|$.}
\label{random-QP}
\end{figure}

}

\section{Connections to Existing Methods}\label{sec:connection}
In this section, we discuss how Algorithms \ref{alg:rpdc} and \ref{alg:srpdc} are related to several existing methods in the literature, and we also compare their convergence results. It turns out that the proposed algorithms specialize to several known methods or their variants in the literature under various specific conditions. Therefore, our convergence analysis recovers some existing results as special cases, as well as provides new convergence results for certain existing algorithms such as the Jacobian proximal parallel ADMM and the primal-dual scheme in \eqref{alg:r1st-pd}.

\subsection{Randomized proximal coordinate descent}
The randomized proximal coordinate descent (RPCD) was proposed in \cite{nesterov2012rcd}, where smooth convex optimization problems are considered. It was then extended in \cite{richtarik2014iteration, lu2015complexity} to deal with nonsmooth problems that can be formulated as
\begin{equation}\label{prob:rbcd}
\min_x f(x_1,\cdots,x_N)+\sum_{i=1}^N u_i(x_i),
\end{equation}
where $x=(x_1;\ldots;x_N)$. Toward solving \eqref{prob:rbcd}, at each iteration $k$, the RPCD method first randomly selects one block $i_k$ and then performs the update:
\begin{equation}\label{alg:rbcd}
x_i^{k+1}=\left\{
\begin{array}{ll}
\argmin_{x_i} \langle \nabla_i f(x^k), x_i\rangle + \frac{L_i}{2}\|x_i-x_i^k\|_2^2+u_i(x_i), & \text{ if }i=i_k,\\
x_i^k, & \text{ if } i\neq i_k,
\end{array}\right.
\end{equation}
where $L_i$ is the Lipschitz continuity constant of the partial gradient $\nabla_i f(x)$. With more than one blocks selected every time, \eqref{alg:rbcd} has been further extended into parallel coordinate descent in \cite{richtarik2015parallel}.

When there is no linear constraint and no $y$-variable in \eqref{eq:main}, then Algorithm \ref{alg:rpdc} reduces to the scheme in \eqref{alg:rbcd} if $I_k=\{i_k\}$, i.e., only one block is chosen, and $P^k= L_{i_k}I, \lambda^k=0,\,\forall k$, and to the parallel coordinate descent in \cite{richtarik2015parallel} if $I_k=\{i_k^1,\cdots,i_k^n\}$ and $P^k=\text{blkdiag}(L_{i_k^1}I,\cdots, L_{i_k^n}I), \lambda^k=0,\,\forall k$. Although the convergence rate results in \cite{richtarik2014iteration, lu2015complexity, richtarik2015parallel} are non-ergodic, we can easily strengthen our result to a non-ergodic one by noticing that \eqref{eq:1step-x} implies nonincreasing monotonicity of the objective if Algorithm \ref{alg:rpdc} is applied to \eqref{prob:rbcd}.

\subsection{Stochastic block proximal gradient}
For solving the problem \eqref{prob:rbcd} with a stochastic $f$, \cite{dang-lan2015SBMD} proposes a stochastic block proximal gradient (SBPG) method, which iteratively performs the update in \eqref{alg:rbcd} with $\nabla_i f(x^k)$ replaced by a stochastic approximation. If $f$ is Lipschitz differentiable, then an ergodic $O(1/\sqrt{t})$ convergence rate was shown. Setting $I_k=\{i_k\},\forall k$, we reduce Algorithm \ref{alg:srpdc} to the SBPG method, and thus our convergence results in Proposition \ref{prop:rate-s} recover that in \cite{dang-lan2015SBMD}.

\subsection{Multi-block ADMM}
Without coupled functions or proximal terms, Algorithm \ref{alg:rpdc} can be regarded as a randomized variant of the multi-block ADMM scheme in \eqref{alg:multi-adm}. While multi-block ADMM can diverge if the problem has three or more blocks, our result in Theorem \ref{thm:rate-3X} shows that $O(1/t)$ convergence rate is guaranteed if at each iteration, one randomly selected block is updated, followed by an update to the multiplier. Note that in the case of no coupled function and $n=1$, \eqref{para-mat-3X} indicates that we can choose $P^k=0$, i.e.\ without proximal term. Hence, randomization is a key to convergence.

When there are only two blocks, ADMM has been shown (e.g., \cite{LMZ2015JORSC}) to have an ergodic $O(1/t)$ convergence rate. If there are no coupled functions, \eqref{para-mat-1Yw} and \eqref{para-mat} both indicate that we can choose $\hat{P}=\rho_x A^\top A, \hat{Q}=\rho_y B^\top B$ if $\theta=1$, i.e., all $x$ and $y$ blocks are selected. Thus according to \eqref{matPQ}, we can set $P^k=0, Q^k=0, \,\forall k$, in which case Algorithm \ref{alg:rpdc} reduces to the classic 2-block ADMM. Hence, our results in Theorems \ref{thm:rate-1Yw} and \ref{thm:rate-cvx}  both recover the ergodic $O(1/t)$ convergence rate of ADMM for two-block convex optimization problems.

\subsection{Proximal Jacobian parallel ADMM}
In \cite{deng2013parallel}, the proximal Jacobian parallel ADMM (Prox-JADMM) was proposed to solve the linearly constrained multi-block separable convex optimization model
\begin{equation}\label{prob:jpadmm}
\min_x \sum_{i=1}^N u_i(x_i), \st \sum_{i=1}^N A_i x_i=b.
\end{equation}
At each iteration, the Prox-JADMM method performs the updates for $i=1,\ldots,n$ in parallel:
\begin{equation}\label{alg:jpadmm-x}
x_i^{k+1}=\argmin_{x_i} u_i(x_i)-\langle\lambda^k, A_ix_i\rangle+\frac{\rho}{2}\big\|A_ix_i+\sum_{j\neq i}A_jx_j^k-b\big\|_2^2+\frac{1}{2}\|x_i-x_i^k\|_{P_i}^2,
\end{equation}
and then updates the multiplier by
\begin{equation}\label{alg:jpadmm-lam}
\lambda^{k+1}=\lambda^k-\gamma\rho\left(\sum_{i=1}^N A_ix_i^{k+1}-b\right),
\end{equation}
where $P_i\succ 0,\forall i$ and $\gamma>0$ is a damping parameter. By choosing approapriate parameters, \cite{deng2013parallel} established convergence rate of order $1/t$ based on norm square of the difference of two consecutive iterates.

If there is no $y$-variable or the coupled function $f$ in \eqref{eq:main}, setting $I_k=[N], P^k=\mathrm{blkdiag}(\rho_x A_1^\top A_1+P_1,\cdots,\rho_x A_N^\top A_N+P_N)-\rho_x A^\top A\succeq 0,\,\forall k$, where $P_i$'s are the same as those in \eqref{alg:jpadmm-x}, then Algorithm~\ref{alg:rpdc} reduces to the Prox-JADMM with $\gamma=1$, and Theorem~\ref{thm:rate-3X} provides a new convergence result in terms of the objective value and the feasibility measure.

\subsection{Randomized primal-dual scheme in \eqref{alg:r1st-pd}}
In this subsection, we show that the scheme in \eqref{alg:r1st-pd} is a special case of Algorithm \ref{alg:rpdc}. Let $g$ be the convex conjugate of $g^*:=h+\iota_{\cZ}$, namely, $g(y)=\sup_{z} \langle y, z\rangle-h(z)-\iota_\cZ(z)$. Then \eqref{eq:saddle-prob} is equivalent to the optimization problem:
$$\min_{x\in\cX} \sum_{i=1}^N u_i(x_i)+g(-Ax),$$
which can be further written as
\begin{equation}\label{equiv-prob}
\min_{x\in\cX, y} \sum_{i=1}^N u_i(x_i)+g(y), \st Ax+y=0.
\end{equation}
\begin{proposition}\label{prop-equiv-pd}
The scheme in \eqref{alg:r1st-pd} is equivalent to the following updates:
\begin{subequations}\label{equiv-alg}
\begin{align}
&x_i^{k+1}=\left\{
\begin{array}{ll}
\underset{x_i\in \cX_i}\argmin\langle -z^k, A_i x_i\rangle +u_i(x_i)+\frac{q}{2\eta}\|A_i (x_i-x_i^k)+r^k\|^2+\frac{1}{2}\|x_i-x_i^k\|_{\tau I-\frac{q}{\eta}A_i^\top A_i}, &~i=i_k,\\
x_i^k, &~i\neq i_k,
\end{array}\right. \label{equiv-x}\\
& y^{k+1}=\argmin_y g(y)-\langle y, z^k\rangle+\frac{1}{2\eta}\| y+ A x^{k+1}\|^2,\label{equiv-y}\\
&z^{k+1}=z^k-\frac{1}{\eta}(Ax^{k+1}+y^{k+1}), \label{equiv-z}
\end{align}
\end{subequations}
where $r^k=Ax^k+y^k$. Therefore, it is a special case of Algorithm \ref{alg:rpdc} applied to \eqref{equiv-prob} with the setting of $I_k=\{i_k\}, \rho_x=\frac{q}{\eta}, \rho_y=\rho=\frac{1}{\eta}$ and $P^k=\tau I-\frac{q}{\eta}A_{i_k}^\top A_{i_k}, Q^k=0,\forall k$.
\end{proposition}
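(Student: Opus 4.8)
The plan is to prove the two assertions of Proposition~\ref{prop-equiv-pd} separately: first that the compact scheme \eqref{equiv-alg} is literally what Algorithm~\ref{alg:rpdc} produces on problem \eqref{equiv-prob} under the stated parameter choices (pure substitution), and then that \eqref{equiv-alg} generates the same primal--dual sequence $\{x^k,z^k\}$ as the original scheme \eqref{alg:r1st-pd}, the extra variable $y^k$ in \eqref{equiv-alg} being an auxiliary quantity eliminated by conjugate duality. I would carry out the substitution first, since it is mechanical, and reserve the conjugacy argument for last.

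For the specialization, note that \eqref{equiv-prob} matches \eqref{eq:main} with the coupled function $f\equiv0$, a single $y$-block carrying the whole objective $v:=g$ (so the coupled $g\equiv0$), and constraint data $B=I$, $b=0$. I would identify the multiplier $\vlam^k=z^k$, take $m=M=1$ so that $J_k=\{1\}$ at every iteration, and insert $\rho_x=q/\eta$, $\rho_y=\rho=1/\eta$, $P^k=\tau I-\tfrac{q}{\eta}A_{i_k}^\top A_{i_k}$ and $Q^k=0$. Then the residuals become $\vr^k=Ax^k+y^k=r^k$, $\vr^{k+\frac12}=Ax^{k+1}+y^k$ and $\vr^{k+1}=Ax^{k+1}+y^{k+1}$, and reading off \eqref{eq:update-x}, \eqref{eq:update-y} and \eqref{eq:update-lam} reproduces \eqref{equiv-x}, \eqref{equiv-y} and \eqref{equiv-z} verbatim.

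For the $x$-step of the equivalence, I would expand the two quadratic terms of \eqref{equiv-x}. The proximal metric $\tau I-\tfrac{q}{\eta}A_{i_k}^\top A_{i_k}$ is tailored so that the $\|A_{i_k}(x_{i_k}-x_{i_k}^k)\|^2$ contributions cancel, leaving $\tfrac{\tau}{2}\|x_{i_k}-x_{i_k}^k\|^2$ together with a cross term $\tfrac{q}{\eta}\langle A_{i_k}(x_{i_k}-x_{i_k}^k),r^k\rangle$ (plus an $x$-independent constant). The multiplier recursion \eqref{equiv-z}, run one step back, gives $r^k=Ax^k+y^k=\eta(z^{k-1}-z^k)$, so this cross term equals $q\langle z^{k-1}-z^k,A_{i_k}x_{i_k}\rangle$ up to constants; combined with the linear term $\langle -z^k,A_{i_k}x_{i_k}\rangle$ it collapses to $\langle -\bar z^k,A_{i_k}x_{i_k}\rangle$ with $\bar z^k=z^k+q(z^k-z^{k-1})$, which is exactly the extrapolation \eqref{alg:r1st-pd-zbar} (index-shifted) entering \eqref{alg:r1st-pd-x}. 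One must separately check that the initialization is consistent---e.g.\ $r^0=Ax^0+y^0=0$ and $\bar z^0=z^0$---so that the $k=0$ step also matches.

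The $y$/$z$ step is where the real work lies, and it rests on the conjugacy $g^*=h+\iota_{\cZ}$. Writing the first-order optimality condition of the original $z$-update \eqref{alg:r1st-pd-z} gives $0\in\partial g^*(z^{k+1})+Ax^{k+1}+\eta(z^{k+1}-z^k)$, i.e.\ the vector $y^{k+1}:=\eta(z^k-z^{k+1})-Ax^{k+1}$ satisfies $y^{k+1}\in\partial g^*(z^{k+1})$. Since $g^*$ is closed convex and $g^{**}=g$, the subgradient inversion $y^{k+1}\in\partial g^*(z^{k+1})\iff z^{k+1}\in\partial g(y^{k+1})$ holds, and $z^{k+1}\in\partial g(y^{k+1})$ is precisely the optimality condition of the $y$-prox \eqref{equiv-y} once \eqref{equiv-z} is substituted. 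Hence the $y$-prox in \eqref{equiv-alg} inverts the $z$-prox in \eqref{alg:r1st-pd} in the Moreau sense, and \eqref{equiv-z} returns the identical $z^{k+1}$. The main obstacle is exactly this conjugate-duality inversion: I must argue that the two subgradient inclusions are genuinely equivalent (and that $z^{k+1}$ is thereby well defined and unique), after which the equivalence of the whole scheme follows by induction on $k$, with everything else being bookkeeping.
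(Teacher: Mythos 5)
Your proposal is correct and follows essentially the same route as the paper: match the $x$-updates by unwinding the extrapolated point $\bar z^k$ through the residual identity $r^k=Ax^k+y^k=\eta(z^{k-1}-z^k)$ (the paper runs this computation in the opposite direction, substituting $\bar z^k=z^k-\tfrac{q}{\eta}r^k$ into \eqref{alg:r1st-pd-x} and completing the square), and show that the $y$-prox \eqref{equiv-y} and the $z$-prox \eqref{alg:r1st-pd-z} are Moreau-conjugate so that \eqref{equiv-z} recovers the same $z^{k+1}$. The only cosmetic difference is that you establish the latter via the subgradient inversion $y\in\partial g^*(z)\Leftrightarrow z\in\partial g(y)$ applied to the optimality conditions, whereas the paper invokes the prox-operator form of Moreau's identity $(I+\partial\phi)^{-1}+(I+\partial\phi^*)^{-1}=I$ together with the scaling fact that the conjugate of $\tfrac{1}{\eta}g^*$ is $\tfrac{1}{\eta}g(\eta\,\cdot)$ --- the same fact in two guises.
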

While the sublinear convergence rate result in \cite{dang2014randomized} requires the boundedness of $\cX$ and $\cZ$, the result in Theorem \ref{thm:rate-1Yw} indicates that the boundedness assumption can be removed if we add one proximal term to the $y$-update in \eqref{equiv-y}.

\section{Concluding Remarks}\label{sec:conc-rem}
We have proposed a randomized primal-dual coordinate update algorithm, called RPDBU, 
for solving linearly constrained convex optimization with multi-block decision variables and coupled terms in the objective. By using a randomization scheme and the proximal gradient mappings, we show a sublinear convergence rate of the RPDBU 
method.
In particular, without any assumptions other than convexity on the objective function and without imposing any restrictions on the constraint matrices, an $O(1/t)$ convergence rate is established. 
We have also extended RPDBU 
to solve the problem where the objective is stochastic. If a stochastic (sub-)gradient estimator is available, then we show that by adaptively choosing the parameter $\alpha_k$ in the added proximal term, an $O(1/\sqrt{t})$ convergence rate can be established. 
Furthermore, if there is no coupled function $f$, then we can remove the proximal term, and the algorithm reduces to a randomized multi-block ADMM. Hence, the convergence of the original randomized multi-block ADMM follows as a consequence of our analysis. Remark also that by taking the sampling set $I_k$ as the whole set and $P^k$ as some special matrices, our algorithm specializes to the proximal Jacobian ADMM.
Finally, we pose as an open problem to decide whether or not
a deterministic counterpart of the RPDBU 
exists, retaining similar convergence properties for solving problem \eqref{eq:main}.
For instance, it would be interesting to know if the algorithm would still be convergent if a deterministic cyclic update rule is applied while a proper proximal term is incorporated.


\bibliographystyle{abbrv}

{\small
\appendix
\section{Proofs of Lemmas}\label{sec:app-A}
We give proofs of several lemmas that are used to show our main results. Throughout our proofs, we define $\tilde{P}$ and $\tilde{Q}$ as follows:
\begin{equation}\label{matPQtilde}
\tilde{P}=\hat{P}-\rho_x A^\top A,\qquad \tilde{Q}=\hat{Q}-\rho_y B^\top B.
\end{equation}
\subsection{Proof of Lemma \ref{lem:1step}}
We prove \eqref{ineq-k1-x}, and \eqref{ineq-k1-y} can be shown by the same arguments.
By the optimality of $\vx_{I_k}^{k+1}$, we have for any $\vx_{I_k}\in \cX_{I_k}$,
\begin{equation}\label{eq:optjk}
(\vx_{I_k}-\vx_{I_k}^{k+1})^\top\left(\nabla_{I_k}f(\vx^k)-\vA_{I_k}^\top\vlam^k
+\tilde{\nabla} u_{I_k}(\vx_{I_k}^{k+1})+\rho_x\vA_{I_k}^\top\vr^{k+\frac{1}{2}}
+\vP^k(\vx_{I_k}^{k+1}-\vx_{I_k}^k)\right)\ge 0,
\end{equation}
where $\tilde{\nabla} u_{I_k}(\vx_{I_k}^{k+1})$ is a subgradient of $u_{I_k}$ at $\vx_{I_k}^{k+1}$, and we have used the formula of $\vr^{k+\frac{1}{2}}$ given in \eqref{eq:update-r1}.
We compute the expectation of each term in \eqref{eq:optjk} in the following. First, we have
\begin{eqnarray}
& &\EE_{I_k}(\vx_{I_k}-\vx_{I_k}^{k+1})^\top\nabla_{I_k}f(\vx^k)\cr
&=& \EE_{I_k}\left(\vx_{I_k}-\vx_{I_k}^k\right)^\top\nabla_{I_k}f(\vx^k) + \EE_{I_k}(\vx_{I_k}^k-\vx_{I_k}^{k+1})^\top\nabla_{I_k}f(\vx^k)\cr
&=&\frac{n}{N}\left(\vx-\vx^k\right)^\top \nabla f(\vx^k) + \EE_{I_k}(\vx^k-\vx^{k+1})^\top\nabla f(\vx^k)\label{gradf-term}\\
&\le & \frac{n}{N}\left(f(\vx)-f(\vx^k)\right) + \EE_{I_k}\left[f(\vx^k)-f(\vx^{k+1})+\frac{L_f}{2}\|\vx^k-\vx^{k+1}\|^2\right]\cr
&=&\frac{n-N}{N}\left(f(\vx)-f(\vx^k)\right)+\EE_{I_k}\left[f(\vx)-f(\vx^{k+1})+\frac{L_f}{2}\|\vx^k-\vx^{k+1}\|^2\right],\label{term1}
\end{eqnarray}
where the last inequality is from the convexity of $f$ and the Lipschitz continuity of $\nabla_{I_k} f(\vx)$. Secondly,
\begin{eqnarray}\label{term3}
& &\EE_{I_k}(\vx_{I_k}-\vx_{I_k}^{k+1})^\top(-\vA_{I_k}^\top\vlam^k)\cr
&=&\EE_{I_k}(\vx_{I_k}-\vx_{I_k}^{k})^\top(-\vA_{I_k}^\top\vlam^k)+\EE_{I_k}(\vx_{I_k}^k-\vx_{I_k}^{k+1})^\top(-\vA_{I_k}^\top\vlam^k)\cr
&=&\frac{n}{N}\left(\vx-\vx^k\right)^\top(-\vA^\top\vlam^k)+\EE_{I_k}(\vx^k-\vx^{k+1})^\top(-\vA^\top\vlam^k)\cr
&=&\frac{n-N}{N}\left(\vx-\vx^k\right)^\top(-\vA^\top\vlam^k)+\EE_{I_k}(\vx-\vx^{k+1})^\top(-\vA^\top\vlam^k).
\end{eqnarray}
For the third term of \eqref{eq:optjk}, we have
\begin{eqnarray}\label{term2}
& &\EE_{I_k}(\vx_{I_k}-\vx_{I_k}^{k+1})^\top\tilde{\nabla} u_{I_k}(\vx_{I_k}^{k+1})\cr
&\le &\EE_{I_k}\left[u_{I_k}(\vx_{I_k})-u_{I_k}(\vx_{I_k}^{k+1})\right]\cr
&=& \frac{n}{N}\, u(\vx)-\EE_{I_k}[u(\vx^{k+1})-u(\vx^k)+u_{I_k}(\vx_{I_k}^k)]\cr
&=&\frac{n}{N}\left[u(\vx)-u(\vx^k)\right]+\EE_{I_k}[u(\vx^k)-u(\vx^{k+1})]\cr
&=&\frac{n-N}{N}\left[u(\vx)-u(\vx^k)\right]+\EE_{I_k}[u(\vx)-u(\vx^{k+1})],
\end{eqnarray}
where the inequality is from the convexity of $u_{I_k}$.
The expectation of the fourth term of \eqref{eq:optjk} is
\begin{eqnarray}\label{term4}
& & \rho_x \EE_{I_k}(\vx_{I_k}-\vx_{I_k}^{k+1})^\top\vA_{I_k}^\top\vr^{k+\frac{1}{2}}\cr
&=& \rho_x \EE_{I_k}(\vx_{I_k}-\vx_{I_k}^{k+1})^\top\vA_{I_k}^\top\vr^{k}+\rho_x\EE_{I_k}(\vx_{I_k}-\vx_{I_k}^{k+1})^\top\vA_{I_k}^\top\vA_{I_k}(\vx_{I_k}^{k+1}-\vx_{I_k}^k)\cr
&=&\rho_x \EE_{I_k}(\vx_{I_k}-\vx_{I_k}^{k})^\top\vA_{I_k}^\top\vr^{k}+\rho_x\EE_{I_k}(\vx_{I_k}^k
-\vx_{I_k}^{k+1})^\top\vA_{I_k}^\top\vr^{k}+\rho_x\EE_{I_k}(\vx_{I_k}-\vx_{I_k}^{k+1})^\top\vA_{I_k}^\top\vA_{I_k}(\vx_{I_k}^{k+1}-\vx_{I_k}^k)\cr
&=&\frac{n\rho_x}{N}(\vx-\vx^k)^\top\vA^\top\vr^k+\rho_x\EE_{I_k}(\vx^k-\vx^{k+1})^\top\vA^\top\vr^{k}
+\rho_x\EE_{I_k}(\vx_{I_k}-\vx_{I_k}^{k+1})^\top\vA_{I_k}^\top\vA_{I_k}(\vx_{I_k}^{k+1}-\vx_{I_k}^k)\cr
&=&\frac{n-N}{N}\rho_x(\vx-\vx^k)^\top\vA^\top\vr^k+\rho_x\EE_{I_k}(\vx-\vx^{k+1})^\top\vA^\top\vr^{k}
+\rho_x\EE_{I_k}(\vx_{I_k}-\vx_{I_k}^{k+1})^\top\vA_{I_k}^\top\vA_{I_k}(\vx_{I_k}^{k+1}-\vx_{I_k}^k). \nonumber \\
\end{eqnarray}
Finally, we have
\begin{eqnarray}\label{term5}
& &\EE_{I_k}(\vx_{I_k}-\vx_{I_k}^{k+1})^\top\vP^k(\vx_{I_k}^{k+1}-\vx_{I_k}^k)\cr
&=&\EE_{I_k}(\vx-\vx^{k+1})^\top\hat{\vP}(\vx^{k+1}-\vx^k)-\rho_x\EE_{I_k}(\vx_{I_k}-\vx_{I_k}^{k+1})^\top\vA_{I_k}^\top\vA_{I_k}(\vx_{I_k}^{k+1}-\vx_{I_k}^k)\cr
&=&\EE_{I_k}(\vx-\vx^{k+1})^\top\tilde{\vP}(\vx^{k+1}-\vx^k)-\rho_x\EE_{I_k}(\vx-\vx^{k+1})^\top\vA^\top\vA(\vx^k-\vx^{k+1})\\
& &-\rho_x\EE_{I_k}(\vx_{I_k}-\vx_{I_k}^{k+1})^\top\vA_{I_k}^\top\vA_{I_k}(\vx_{I_k}^{k+1}-\vx_{I_k}^k),\nonumber
\end{eqnarray}
where we used the formulas of $\vP$ in \eqref{matPQ} and \eqref{matPQtilde}.

Plugging \eqref{term1} through \eqref{term5} into \eqref{eq:optjk} and recalling $F(\vx)=f(\vx)+u(\vx)$, by rearranging terms we have
\begin{eqnarray}\label{ineq-k0-x}
& &\EE_{I_k}\left[F(\vx^{k+1})-F(\vx)+(\vx^{k+1}-\vx)^\top(-\vA^\top\vlam^k)+
\rho_x(\vx^{k+1}-\vx)^\top\vA^\top\vr^{k+\frac{1}{2}}\right]\cr
& &+\EE_{I_k}\left(\vx^{k+1}-\vx\right)^\top\tilde{\vP}(\vx^{k+1}-\vx^k)-\frac{L_f}{2}\EE_{I_k}\|\vx^k-\vx^{k+1}\|^2\cr
& \le & \frac{N-n}{N}\left[F(\vx^k)-F(\vx)+(\vx^{k}-\vx)^\top(-\vA^\top\vlam^k)+
\rho_x(\vx^{k}-\vx)^\top\vA^\top\vr^{k}\right].
\end{eqnarray}
Note
\begin{eqnarray}
& &(\vx^{k+1}-\vx)^\top(-\vA^\top\vlam^k)+
\rho_x(\vx^{k+1}-\vx)^\top\vA^\top\vr^{k+\frac{1}{2}}\cr
&=&(\vx^{k+1}-\vx)^\top(-\vA^\top\vlam^k)+\rho_x(\vx^{k+1}-\vx)^\top\vA^\top\vr^{k+1}
-\rho_x(\vx^{k+1}-\vx)^\top\vA^\top\vB(\vy^{k+1}-\vy^k)\label{r-half-to-one}\\
&{\overset{\eqref{eq:update-lam}}=}&(\vx^{k+1}-\vx)^\top(-\vA^\top\vlam^{k+1})+(\rho_x-\rho)(\vx^{k+1}-\vx)^\top\vA^\top\vr^{k+1}
-\rho_x(\vx^{k+1}-\vx)^\top\vA^\top\vB(\vy^{k+1}-\vy^k).\nonumber
\end{eqnarray}
Hence,
we can rewrite \eqref{ineq-k0-x} equivalently into \eqref{ineq-k1-x}.
Through the same arguments, one can show \eqref{ineq-k1-y}, thus completing the proof.

\subsection{Proof of Lemma \ref{lem:xy-rate}}
Letting $x=x^*,y=y^*$ in \eqref{eq:xy-phi}, we have for any $\lambda$ that 
\begin{eqnarray}
& & \EE[h(x^*,y^*,\vlam)]\nonumber\\
&\ge & \EE\left[\Phi(\hat{\vx},\hat{\vy})-\Phi(\vx^*,\vy^*)+(\hat{x}-x^*)^{\top}(-A^{\top}\hat{\lambda})
+(\hat{y}-y^*)^{\top}(-B^{\top}\hat{\lambda})+(\hat{\lambda}-\lambda)^{\top}(A\hat{x}+B\hat{y}-b)\right]\nonumber \\
&=& \EE\left[\Phi(\hat{\vx},\hat{\vy})-\Phi(\vx^*,\vy^*)+\langle\hat{\lambda},Ax^*+By^*-b\rangle-\langle\lambda,A\hat{x}+B\hat{y}-b\rangle\right]\nonumber\\
&=&\EE\left[\Phi(\hat{\vx},\hat{\vy})-\Phi(\vx^*,\vy^*)-\langle\lambda,A\hat{x}+B\hat{y}-b\rangle\right],
\label{eq:xy-phi-star}
\end{eqnarray}
where the last equality follows from the feasibility of $(x^*,y^*)$.
For any $\gamma>0$, restricting $\lambda$ in $\mathcal{B}_\gamma$, we have
$$\EE[h(x^*,y^*,\vlam)]\le \sup\limits_{\lambda\in\cB_\gamma}h(x^*,y^*,\vlam).$$
Hence, letting $\lambda=-\frac{\gamma(A\hat{x}+B\hat{y}-b)}{\|A\hat{x}+B\hat{y}-b\|}\in\mathcal{B}_\gamma$ in \eqref{eq:xy-phi-star} gives the desired result.

\subsection{Proof of Inequalities \eqref{bd-x-s1} and \eqref{bd-x-s2} with $\alpha_k=\frac{\alpha_0}{\sqrt{k}}$}\label{app:bd-x-s}
We have
$\beta_k=\frac{\alpha_0}{\rho\big(\sqrt{k}-(1-\theta)\sqrt{k-1}\big)},$
and
\begin{eqnarray*}
& &\frac{\alpha_{k-1}}{\alpha_{k}}\beta_{k}+(1-\theta)\beta_{k+1}-\frac{\alpha_k}{\alpha_{k+1}}\beta_{k+1}-(1-\theta)\beta_{k}\\
&=&\frac{\alpha_0}{\rho}\left[\left(\frac{\sqrt{k}}{\sqrt{k-1}}-(1-\theta)\right)\frac{1}{\big(\sqrt{k}-(1-\theta)\sqrt{k-1}\big)}
-\left(\frac{\sqrt{k+1}}{\sqrt{k}}-(1-\theta)\right)\frac{1}{\big(\sqrt{k+1}-(1-\theta)\sqrt{k}\big)}\right]\cr
&=:&\frac{\alpha_0}{\rho}[\psi(k)-\psi(k+1)].
\end{eqnarray*}
By elementary calculus, we have
\begin{eqnarray*}
\psi'(k)&=&\frac{\frac{\sqrt{k-1}}{\sqrt{k}}-\frac{\sqrt{k}}{\sqrt{k-1}}}{2(k-1)}\frac{1}{\big(\sqrt{k}-(1-\theta)\sqrt{k-1}\big)}\\
& & +\left(\frac{\sqrt{k}}{\sqrt{k-1}}-(1-\theta)\right)\frac{-1}{2\big(\sqrt{k}-(1-\theta)\sqrt{k-1}\big)^2}\left(\frac{1}{\sqrt{k}}-\frac{1-\theta}{\sqrt{k-1}}\right)\\
&=&\frac{1}{2(k-1)(\sqrt{k}-(1-\theta)\sqrt{k-1})}\left[\frac{\sqrt{k-1}}{\sqrt{k}}-\frac{\sqrt{k}}{\sqrt{k-1}}-\sqrt{k-1}\left(\frac{1}{\sqrt{k}}-\frac{1-\theta}{\sqrt{k-1}}\right)\right]\\
&=&\frac{1}{2(k-1)(\sqrt{k}-(1-\theta)\sqrt{k-1})}\left((1-\theta)-\frac{\sqrt{k}}{\sqrt{k-1}}\right)<0.
\end{eqnarray*}
Hence, $\psi(k)$ is decreasing with respect to $k$, and thus \eqref{bd-x-s1} holds.

When $\alpha_k=\frac{\alpha_0}{\sqrt{k}}$, \eqref{bd-x-s2} becomes
$$\frac{\alpha_0}{2\rho\sqrt{t}}\ge\left|\left(\frac{\sqrt{t}}{\sqrt{t-1}}-(1-\theta)\right)\frac{\alpha_0}{\rho(\sqrt{t}-(1-\theta)\sqrt{t-1})}-\frac{\alpha_0}{\rho\sqrt{t}}\right|,$$
which is equivalent to
$$\frac{1}{2}\ge \frac{\sqrt{t}}{\sqrt{t-1}}-1 \Longleftrightarrow t\ge \frac{9}{5}.$$
This completes the proof.

\section{Proofs of Theorems} \label{sec:app-B}
In this section, we give the technical details for showing all theorems. 

\subsection{Proof of Theorem \ref{thm:rate-3X}}
Taking expectation over both sides of \eqref{ineq-k1-x} and summing it over $k=0$ through $t$, we have
\begin{eqnarray}\label{ineq-k2-x}
& &\EE\big[F(\vx^{t+1})-F(\vx)+(\vx^{t+1}-\vx)^\top(-\vA^\top\vlam^{t+1})\big]+(1-\theta)\rho_x\EE(\vx^{t+1}-\vx)^\top\vA^\top\vr^{t+1}\cr
& &+\theta\sum_{k=0}^{t-1}\EE\big[F(\vx^{k+1})-F(\vx)+(\vx^{k+1}-\vx)^\top(-\vA^\top\vlam^{k+1})\big]
-\sum_{k=0}^t\rho_x\EE(\vx^{k+1}-\vx)^\top\vA^\top\vB(\vy^{k+1}-\vy^k)\cr
& &+\sum_{k=0}^t\EE(\vx^{k+1}-\vx)^\top\tilde{\vP}(\vx^{k+1}-\vx^k)-\frac{L_f}{2}\sum_{k=0}^t\EE\|\vx^k-\vx^{k+1}\|^2\cr
&\le & (1-\theta)\left[F(\vx^0)-F(\vx)+(\vx^{0}-\vx)^\top(-\vA^\top\vlam^0)+
\rho_x(\vx^{0}-\vx)^\top\vA^\top\vr^{0}\right],
\end{eqnarray}
where we have used $\frac{n}{N}=\theta$, the condition in \eqref{para-rho} and the definition of $\tilde{P}$ in  \eqref{matPQtilde}.
Similarly, taking expectation over both sides of \eqref{ineq-k1-y}, summing it over $k=0$ through $t$, we have
\begin{eqnarray}\label{ineq-k2-y}
& &\EE\big[G(\vy^{t+1})-G(\vy)+(\vy^{t+1}-\vy)^\top(-\vB^\top\vlam^{t+1})\big]+
(1-\theta)\rho_y\EE(\vy^{t+1}-\vy)^\top\vB^\top\vr^{t+1}\cr
& &+\theta\sum_{k=0}^{t-1}\EE\big[G(\vy^{k+1})-G(\vy)+(\vy^{k+1}-\vy)^\top(-\vB^\top\vlam^{k+1})\big]\cr
& &+\sum_{k=0}^t\EE(\vy^{k+1}-\vy)^\top\tilde{\vQ}(\vy^{k+1}-\vy^k)-\frac{L_g}{2}\sum_{k=0}^t\EE\|\vy^k-\vy^{k+1}\|^2\cr
&\le & (1-\theta)\left[G(\vy^0)-G(\vy)+(\vy^{0}-\vy)^\top(-\vB^\top\vlam^0)+
\rho_y(\vy^{0}-\vy)^\top\vB^\top\vr^{0}\right]\\
& &+(1-\theta)\sum_{k=0}^t\EE\rho_y(\vy^{k}-\vy)^\top\vB^\top\vA(\vx^{k+1}-\vx^k).\nonumber
\end{eqnarray}

Recall $\vlam^{k+1}=\vlam^k-\rho\vr^{k+1}$, thus
\begin{equation}\label{eq-lam}
(\vlam^{k+1}-\vlam)^\top \vr^{k+1} =-\frac{1}{\rho}(\vlam^{k+1}-\vlam)^\top(\vlam^{k+1}-\vlam^k),
\end{equation}
where $\lambda$ is an arbitrary vector and possibly random.
Denote $\tilde{\vlam}^{t+1}=\vlam^t-\rho_x\vr^{t+1}.$
Then similar to \eqref{eq-lam}, we have
\begin{equation}\label{eq-lamt}
(\tilde{\vlam}^{t+1}-\vlam)^\top \vr^{t+1} =-\frac{1}{\rho_x}(\tilde{\vlam}^{t+1}-\vlam)^\top(\tilde{\vlam}^{t+1}-\vlam^t).
\end{equation}

Summing \eqref{ineq-k2-x} and \eqref{ineq-k2-y} together and using \eqref{eq-lam} and \eqref{eq-lamt}, we have:
\begin{eqnarray}\label{ineq-k3-w}
& &\EE\left[\Phi(\vx^{t+1},\vy^{t+1})-\Phi(\vx,\vy)+(\tilde{\vw}^{t+1}-\vw)^\top H(\tilde{\vw}^{t+1})+\frac{1}{\rho_x}(\tilde{\vlam}^{t+1}-\vlam)^\top(\tilde{\vlam}^{t+1}-\vlam^t)\right]\cr
& &+\theta\sum_{k=0}^{t-1}\EE\left[\Phi(\vx^{k+1},\vy^{k+1})-\Phi(\vx,\vy)+(\vw^{k+1}-\vw)^\top H(\vw^{k+1})+\frac{1}{\rho}(\vlam^{k+1}-\vlam)^\top(\vlam^{k+1}-\vlam^k)\right]\cr
&\le & (1-\theta)\left[F(\vx^0)-F(\vx)+(\vx^{0}-\vx)^\top(-\vA^\top\vlam^0)+
\rho_x(\vx^{0}-\vx)^\top\vA^\top\vr^{0}\right]\cr
& &+(1-\theta)\left[G(\vy^0)-G(\vy)+(\vy^{0}-\vy)^\top(-\vB^\top\vlam^0)+
\rho_y(\vy^{0}-\vy)^\top\vB^\top\vr^{0}\right]\cr
& &+\sum_{k=0}^t\rho_x\EE(\vx^{k+1}-\vx)^\top\vA^\top\vB(\vy^{k+1}-\vy^k)
+(1-\theta)\sum_{k=0}^t\rho_y\EE(\vy^{k}-\vy)^\top\vB^\top\vA(\vx^{k+1}-\vx^k)\cr
& & -\sum_{k=0}^t\EE(\vx^{k+1}-\vx)^\top\tilde{\vP}(\vx^{k+1}-\vx^k)+\frac{L_f}{2}\sum_{k=0}^t\EE\|\vx^k-\vx^{k+1}\|^2\cr
& & -\sum_{k=0}^t\EE(\vy^{k+1}-\vy)^\top\tilde{\vQ}(\vy^{k+1}-\vy^k)+\frac{L_g}{2}\sum_{k=0}^t\EE\|\vy^k-\vy^{k+1}\|^2,
\end{eqnarray}
where we have used $\Phi(\vx,\vy)=F(\vx)+G(\vy)$ and the definition of $H$ given in \eqref{w-H}.

When $B=0$ and $y^k\equiv y^0$, \eqref{ineq-k3-w} reduces to
\begin{eqnarray*}
& &\EE\left[F(\vx^{t+1})-F(\vx)+(\tilde{\vw}^{t+1}-\vw)^\top H(\tilde{\vw}^{t+1})+\frac{1}{\rho_x}(\tilde{\vlam}^{t+1}-\vlam)^\top(\tilde{\vlam}^{t+1}-\vlam^t)\right]\cr
& &+\theta\sum_{k=0}^{t-1}\EE\left[F(\vx^{k+1})-F(\vx)+(\vw^{k+1}-\vw)^\top H(\vw^{k+1})+\frac{1}{\rho}(\vlam^{k+1}-\vlam)^\top(\vlam^{k+1}-\vlam^k)\right]\cr
&\le & (1-\theta)\left[F(\vx^0)-F(\vx)+(\vx^{0}-\vx)^\top(-\vA^\top\vlam^0)+
\rho_x(\vx^{0}-\vx)^\top\vA^\top\vr^{0}\right]\cr
& &-\sum_{k=0}^t\EE(\vx^{k+1}-\vx)^\top\tilde{\vP}(\vx^{k+1}-\vx^k)+\frac{L_f}{2}\sum_{k=0}^t\EE\|\vx^k-\vx^{k+1}\|^2.
\end{eqnarray*}
Using \eqref{uv-cross} and noting $\theta=\frac{\rho}{\rho_x}$, from the above inequality after cancelling terms we have
\begin{eqnarray}\label{ineq-k3-w-0y}
& &\EE\left[F(\vx^{t+1})-F(\vx)+(\tilde{\vw}^{t+1}-\vw)^\top H(\tilde{\vw}^{t+1})\right]+\theta\sum_{k=0}^{t-1}\EE\left[F(\vx^{k+1})-F(\vx)+(\vw^{k+1}-\vw)^\top H(\vw^{k+1})\right]\nonumber\\
& &+\frac{1}{2\rho_x}\EE\left[\|\tilde{\vlam}^{t+1}-\vlam\|^2-\|\vlam^0-\vlam\|^2+\|\tilde{\vlam}^{t+1}-\vlam^t\|^2+\sum_{k=0}^{t-1}\|\vlam^{k+1}-\vlam^k\|^2\right]\cr
&\le & (1-\theta)\left[F(\vx^0)-F(\vx)+(\vx^{0}-\vx)^\top(-\vA^\top\vlam^0)+
\rho_x(\vx^{0}-\vx)^\top\vA^\top\vr^{0}\right]\nonumber\\
& &-\frac{1}{2}\EE\left[\|x^{t+1}-x\|_{\tilde{P}}^2-\|x^0-x\|_{\tilde{P}}^2+\sum_{k=0}^t\|x^{k+1}-x^k\|_{\tilde{P}}^2\right]+\frac{L_f}{2}\sum_{k=0}^t\EE\|\vx^k-\vx^{k+1}\|^2.
\end{eqnarray}
For any feasible $x$, we note $\tilde{\vlam}^{t+1}-\vlam^t=\rho_xA(x^{t+1}-x)$ and thus
\begin{equation}
\frac{1}{\rho_x}\|\tilde{\vlam}^{t+1}-\vlam^t\|^2=\rho_x\|x^{t+1}-x\|_{A^\top A}^2\label{temp-lam}.
\end{equation}
In addition, since $x^{k+1}$ and $x^k$ differ only on the index set $I_k$, we have by recalling $\tilde{P}=\hat{P}-\rho_x A^\top A$ that
\begin{equation}\label{temp-x}
\|x^{k+1}-x^k\|_{\tilde{P}}^2-L_f\|x^{k+1}-x^k\|^2=\|x_{I_k}^{k+1}-x_{I_k}^k\|_{\hat{P}_{I_k}}^2-\|x_{I_k}^{k+1}-x_{I_k}^k\|_{\rho_x A_{I_k}^\top A_{I_k}}^2-L_f\|x_{I_k}^{k+1}-x_{I_k}^k\|^2.
\end{equation}
Plugging \eqref{temp-lam} and \eqref{temp-x} into \eqref{ineq-k3-w-0y}, and using \eqref{para-mat-3X} leads to
\begin{eqnarray*}
& &\EE\left[F(\vx^{t+1})-F(\vx)+(\tilde{\vw}^{t+1}-\vw)^\top H(\tilde{\vw}^{t+1})\right]+\theta\sum_{k=0}^{t-1}\EE\left[F(\vx^{k+1})-F(\vx)+(\vw^{k+1}-\vw)^\top H(\vw^{k+1})\right]\cr
&\le & (1-\theta)\left[F(\vx^0)-F(\vx)+(\vx^{0}-\vx)^\top(-\vA^\top\vlam^0)+
\rho_x(\vx^{0}-\vx)^\top\vA^\top\vr^{0}\right]
+\frac{1}{2\rho_x}\EE\|\vlam^0-\vlam\|^2+\frac{1}{2}\|x^0-x\|_{\tilde{P}}^2.
\end{eqnarray*}
The desired result follows from $\lambda^0=0$, and Lemmas \ref{lem:xy-rate} and \ref{equiv-rate} with $\gamma=\max\{1+\|\lambda^*\|, 2\|\lambda^*\|\}$.

\subsection{Proof of Theorem \ref{thm:rate-1Yw}}

It follows from \eqref{ineq-k1-y} with $\rho_y=\rho$ and $m=M$ that (recall the definition of $\tilde{Q}$ in  \eqref{matPQtilde}) for any $\vy\in\cY$,
\begin{equation}\label{ineq2-k2}
G(\vy^{k+1})-G(\vy)-\frac{L_g}{2}\|\vy^k-\vy^{k+1}\|^2+(\vy^{k+1}-\vy)^\top(-\vB^\top\vlam^{k+1})+(\vy^{k+1}-\vy)^\top
\tilde{\vQ}(\vy^{k+1}-\vy^k)\le 0.
\end{equation}
Similar to \eqref{ineq2-k2}, and recall the definition of $\tilde{y}^{t+1}$, we have for any $\vy\in\cY$,
\begin{equation}\label{ineq2-k2-tildey}
G(\tilde{\vy}^{t+1})-G(\vy)-\frac{L_g}{2}\|\tilde{\vy}^{t+1}-\vy^t\|^2+(\tilde{\vy}^{t+1}-\vy)^\top(-\vB^\top\tilde{\vlam}^{t+1})+\theta(\tilde{\vy}^{t+1}-\vy)^\top
\tilde{\vQ}(\vy^{t+1}-\vy^t)\le 0,
\end{equation}
where
\begin{equation}\label{tilde-lam2}
\tilde{\vlam}^{t+1}=\vlam^t-\rho_x(\vA\vx^{t+1}+\vB\tilde{\vy}^{t+1}-\vb).
\end{equation}
Adding \eqref{ineq2-k2} and \eqref{ineq2-k2-tildey} to \eqref{ineq-k2-x} and using the formula of $\vlam^k$ gives
\begin{eqnarray}\label{ineq-k2-x-1Yw}
& &\EE\left[F(\vx^{t+1})-F(\vx)+(\vx^{t+1}-\vx)^\top(-\vA^\top\tilde{\vlam}^{t+1})\right]
+\EE\left(\tilde{\vlam}^{t+1}-\vlam\right)^\top\left(\vA\vx^{t+1}+\vB\tilde{\vy}^{t+1}-\vb+\frac{1}{\rho_x}(\tilde{\vlam}^{t+1}-\vlam^t)\right)\nonumber\\
& &+\EE\left[G(\tilde{\vy}^{t+1})-G(\vy)+(\tilde{\vy}^{t+1}-\vy)^\top(-\vB^\top\tilde{\vlam}^{t+1})+\theta(\tilde{\vy}^{t+1}-\vy)^\top
\tilde{\vQ}(\tilde{\vy}^{t+1}-\vy^k)\right]-\frac{L_g}{2}\EE\|\tilde{\vy}^{t+1}-\vy^t\|^2\nonumber\\
& &+\theta\sum_{k=0}^{t-1}\EE\left[F(\vx^{k+1})-F(\vx)+(\vx^{k+1}-\vx)^\top(-\vA^\top\vlam^{k+1})\right]\nonumber\\
& &-\sum_{k=0}^{t-1}\rho_x\EE(\vx^{k+1}-\vx)^\top\vA^\top\vB(\vy^{k+1}-\vy^k)
-\rho_x\EE(\vx^{t+1}-\vx)^\top\vA^\top\vB(\tilde{\vy}^{t+1}-\vy^t)\nonumber\\
& &+\theta\sum_{k=0}^{t-1}\EE\left[G(\vy^{k+1})-G(\vy)-\frac{L_g}{2}\|\vy^k-\vy^{k+1}\|^2+(\vy^{k+1}-\vy)^\top(-\vB^\top\vlam^{k+1})+(\vy^{k+1}-\vy)^\top
\tilde{\vQ}(\vy^{k+1}-\vy^k)\right]\nonumber\\
& &+\theta\sum_{k=0}^{t-1}\EE(\vlam^{k+1}-\vlam)^\top\left(\vr^{k+1}+\frac{1}{\rho}(\vlam^{k+1}-\vlam^k)\right)\nonumber\\
& \le & (1-\theta)\left[F(\vx^0)-F(\vx)+(\vx^{0}-\vx)^\top(-\vA^\top\vlam^0)+
\rho_x(\vx^{0}-\vx)^\top\vA^\top\vr^{0}\right]\nonumber\\
& &-\sum_{k=0}^t\EE(\vx^{k+1}-\vx)^\top\tilde{\vP}(\vx^{k+1}-\vx^k)+\frac{L_f}{2}\sum_{k=0}^t\EE\|\vx^k-\vx^{k+1}\|^2.
\end{eqnarray}
By the notation in \eqref{w-H} and using \eqref{feas-x}, \eqref{ineq-k2-x-1Yw} can be written into
\begin{eqnarray*}
& &\EE\left[\Phi(\vx^{t+1},\tilde{\vy}^{t+1})-\Phi(\vx,\vy)+(\tilde{\vw}^{t+1}-\vw)^\top H(\tilde{\vw}^{t+1})\right]\\
& & +\theta\sum_{k=0}^{t-1}\EE\left[\Phi(\vx^{k+1},\vy^{k+1})-\Phi(\vx,\vy)+(\vw^{k+1}-\vw)^\top H(\vw^{k+1})\right]
\nonumber\\
& &+\theta\sum_{k=0}^{t-1}\EE(\vy^{k+1}-\vy)^\top
\tilde{\vQ}(\vy^{k+1}-\vy^k)+\theta\EE(\tilde{\vy}^{t+1}-\vy)^\top
\tilde{\vQ}(\tilde{\vy}^{t+1}-\vy^t)\nonumber\\
& &-\sum_{k=0}^{t-1}\rho_x\EE\left(\frac{1}{\rho}(\vlam^k-\vlam^{k+1})^\top\vB(\vy^{k+1}-\vy^k)
-(\vy^{k+1}-\vy)^\top\vB^\top\vB(\vy^{k+1}-\vy^k)\right)\nonumber\\
& &-\rho_x\EE\left(\frac{1}{\rho_x}(\vlam^t-\tilde{\vlam}^{t+1})^\top\vB(\tilde{\vy}^{t+1}-\vy^t)
-(\tilde{\vy}^{t+1}-\vy)^\top\vB^\top\vB(\tilde{\vy}^{t+1}-\vy^t)\right)\nonumber\\
& &+\frac{\theta}{\rho}\EE(\tilde{\vlam}^{t+1}-\vlam)^\top\left(\tilde{\vlam}^{t+1}-\vlam^t\right)
+\frac{\theta}{\rho}\sum_{k=0}^{t-1}\EE(\vlam^{k+1}-\vlam)^\top\left(\vlam^{k+1}-\vlam^k\right)\cr
&\le & (1-\theta)\left[F(\vx^0)-F(\vx)+(\vx^{0}-\vx)^\top(-\vA^\top\vlam^0)+
\rho_x(\vx^{0}-\vx)^\top\vA^\top\vr^{0}\right]\nonumber\\
& &-\sum_{k=0}^t\EE(\vx^{k+1}-\vx)^\top\tilde{\vP}(\vx^{k+1}-\vx^k)+\frac{L_f}{2}\sum_{k=0}^t\EE\|\vx^k-\vx^{k+1}\|^2\nonumber\\
& &+\frac{\theta L_g}{2}\sum_{k=0}^{t-1}\EE\|\vy^k-\vy^{k+1}\|^2+\frac{L_g}{2}\EE\|\tilde{\vy}^{t+1}-\vy^t\|^2.
\end{eqnarray*}
Now use \eqref{uv-cross} to derive from the above inequality that
\begin{eqnarray}\label{ineq-k3-x-1Yw}
& &\EE\left[\Phi(\vx^{t+1},\tilde{\vy}^{t+1})-\Phi(\vx,\vy)+(\tilde{\vw}^{t+1}-\vw)^\top H(\tilde{\vw}^{t+1})\right] \nonumber \\
& & +\theta\sum_{k=0}^{t-1}\EE\left[\Phi(\vx^{k+1},\vy^{k+1})-\Phi(\vx,\vy)+(\vw^{k+1}-\vw)^\top H(\vw^{k+1})\right]
\nonumber\\
& &+\frac{\theta}{2}\left(\EE\|\tilde{\vy}^{t+1}-\vy\|_{\tilde{\vQ}}^2-\|\vy^0-\vy\|_{\tilde{\vQ}}^2\right)
+\frac{\theta}{2}\sum_{k=0}^{t-1}\EE\|\vy^{k+1}-\vy^k\|_{\tilde{\vQ}}^2+\frac{\theta}{2}\EE\|\tilde{\vy}^{t+1}-\vy^t\|_{\tilde{\vQ}}^2\nonumber\\
& &+\frac{\rho_x}{2}\left(\EE\|\tilde{\vy}^{t+1}-\vy\|_{\vB^\top\vB}^2-\|\vy^0-\vy\|_{\vB^\top\vB}^2\right)
+\frac{\rho_x}{2}\sum_{k=0}^{t-1}\EE\|\vy^{k+1}-\vy^k\|_{\vB^\top\vB}^2+\frac{\rho_x}{2}\EE\|\tilde{\vy}^{t+1}-\vy^t\|_{\vB^\top\vB}^2\nonumber\\
& &-\sum_{k=0}^{t-1}\EE\frac{\rho_x}{\rho}\left(\vlam^k-\vlam^{k+1}\right)^\top\vB(\vy^{k+1}-\vy^k)
-\EE(\vlam^t-\tilde{\vlam}^{t+1})^\top\vB(\tilde{\vy}^{t+1}-\vy^t)\nonumber\\
& &+\frac{\theta}{2\rho}\left(\EE\|\tilde{\vlam}^{t+1}-\vlam\|^2-\|\vlam^0-\vlam\|^2\right)
+\frac{\theta}{2\rho}\sum_{k=0}^{t-1}\EE\|\vlam^{k+1}-\vlam^k\|^2+\frac{\theta}{2\rho}\EE\|\tilde{\vlam}^{t+1}-\vlam^t\|^2\nonumber\\
&\le & (1-\theta)\left[F(\vx^0)-F(\vx)+(\vx^{0}-\vx)^\top(-\vA^\top\vlam^0)+
\rho_x(\vx^{0}-\vx)^\top\vA^\top\vr^{0}\right]\nonumber\\
& &-\frac{1}{2}\left[\EE\|\vx^{t+1}-\vx\|^2_{\tilde{\vP}}-\|\vx^0-\vx\|^2_{\tilde{\vP}}
+\sum_{k=0}^t\EE\|\vx^k-\vx^{k+1}\|^2_{\tilde{\vP}}\right]+\frac{L_f}{2}\sum_{k=0}^t\EE\|\vx^k-\vx^{k+1}\|^2\nonumber\\
& &+\frac{\theta L_g}{2}\sum_{k=0}^{t-1}\EE\|\vy^k-\vy^{k+1}\|^2+\frac{L_g}{2}\EE\|\tilde{\vy}^{t+1}-\vy^t\|^2.
\end{eqnarray}
Note that for $k\le t-1$,
$$-\frac{\rho_x}{\rho}(\vlam^k-\vlam^{k+1})^\top\vB(\vy^{k+1}-\vy^k)+\frac{\theta}{2\rho}\|\vlam^{k+1}-\vlam^k\|^2\ge-\frac{\rho}{2\theta^3}\|\vy^{k+1}-\vy^k\|_{\vB^\top\vB}^2$$
and
$$-(\vlam^t-\tilde{\vlam}^{t+1})^\top\vB(\tilde{\vy}^{t+1}-\vy^t)+\frac{\theta}{2\rho}\|\tilde{\vlam}^{t+1}-\vlam^t\|^2\ge-\frac{\rho}{2\theta}\|\tilde{\vy}^{t+1}-\vy^t\|_{\vB^\top\vB}^2.$$

Because $\tilde{\vP}, {\tilde{\vQ}}$ and $\rho$ satisfy \eqref{para-mat-1Yw},
we have from \eqref{ineq-k3-x-1Yw} that
\begin{eqnarray}\label{ineq-k4-x-1Yw}
& &\EE\left[\Phi(\vx^{t+1},\tilde{\vy}^{t+1})-\Phi(\vx,\vy)+(\tilde{\vw}^{t+1}-\vw)^\top H(\tilde{\vw}^{t+1})\right] \nonumber \\
& & +\theta\sum_{k=0}^{t-1}\EE\left[\Phi(\vx^{k+1},\vy^{k+1})-\Phi(\vx,\vy)+(\vw^{k+1}-\vw)^\top H(\vw^{k+1})\right]
\nonumber\cr
&\le & (1-\theta)\left[F(\vx^0)-F(\vx)+(\vx^{0}-\vx)^\top(-\vA^\top\vlam^0)+
\rho_x(\vx^{0}-\vx)^\top\vA^\top\vr^{0}\right]\cr
& &+\frac{1}{2}\|\vx^0-\vx\|^2_{\tilde{\vP}}+\frac{\theta}{2}\|\vy^0-\vy\|_{\tilde{\vQ}}^2+\frac{\rho}{2\theta}\|\vy^0-\vy\|_{\vB^\top\vB}^2+\frac{\theta}{2\rho}\EE\|\vlam^0-\vlam\|^2.
\end{eqnarray}
Similar to Theorem \ref{thm:rate-cvx}, from the convexity of $\Phi$ and \eqref{prop-mas-H}, we have
\begin{eqnarray}
& &(1+\theta t)\EE\big[\Phi(\hat{\vx}^t,\hat{\vy}^t)-\Phi(\vx,\vy)+(\hat{\vw}^{t+1}-\vw)^\top H({\vw})\big]\cr
&\le & (1-\theta)\big[F(\vx^0)-F(\vx)+(\vx^{0}-\vx)^\top(-\vA^\top\vlam^0)+
\rho_x(\vx^{0}-\vx)^\top\vA^\top\vr^{0}\big]\cr
& &+\frac{1}{2}\|\vx^0-\vx\|^2_{\tilde{\vP}}+\frac{\theta}{2}\|\vy^0-\vy\|_{\tilde{\vQ}}^2+\frac{\rho}{2\theta}\|\vy^0-\vy\|_{\vB^\top\vB}^2+\frac{\theta}{2\rho}\EE\|\vlam^0-\vlam\|^2.
\end{eqnarray}
Noting $\lambda^0=0$ and $(\vx^{0}-\vx)^\top\vA^\top\vr^{0}\le \frac{1}{2}\big[\|x^0-x\|_{A^\top A}+\|r^0\|^2\big]$, and using Lemmas \ref{lem:xy-rate} and \ref{equiv-rate} with $\gamma=\max\{1+\|\lambda^*\|, 2\|\lambda^*\|\}$, we obtain the result \eqref{eq:rate-iym}.

\subsection{Proof of Theorem \ref{thm:rate-cvx}}
Using \eqref{feas-x} and \eqref{feas-y}, applying \eqref{uv-cross} to the cross terms, and also noting the definition of $\tilde{P}$ and $\tilde{Q}$ in \eqref{matPQtilde}, we have
\begin{eqnarray}\label{ineq-k4-w-sub1}
& &-\frac{\theta}{\rho}\EE\left[\sum_{k=0}^{t-1}(\vlam^{k+1}-\vlam)^\top(\vlam^{k+1}-\vlam^k)
+(\tilde{\vlam}^{t+1}-\vlam)^\top(\tilde{\vlam}^{t+1}-\vlam^t)\right]\cr
& &+\sum_{k=0}^t\rho_x\EE(\vx^{k+1}-\vx)^\top\vA^\top\vB(\vy^{k+1}-\vy^k)
+(1-\theta)\sum_{k=0}^t\rho_y\EE(\vy^{k}-\vy)^\top\vB^\top\vA(\vx^{k+1}-\vx^k)\cr
& & -\sum_{k=0}^t\EE(\vx^{k+1}-\vx)^\top\tilde{\vP}(\vx^{k+1}-\vx^k)+\frac{L_f}{2}\sum_{k=0}^t\EE\|\vx^k-\vx^{k+1}\|^2\cr
& & -\sum_{k=0}^t\EE(\vy^{k+1}-\vy)^\top\tilde{\vQ}(\vy^{k+1}-\vy^k)+\frac{L_g}{2}\sum_{k=0}^t\EE\|\vy^k-\vy^{k+1}\|^2\cr
&=&-\frac{\theta}{2\rho}\EE\left[\|\tilde{\vlam}^{t+1}-\vlam\|^2-\|\vlam^0-\vlam\|^2+\sum_{k=0}^{t-1}\|\vlam^{k+1}-\vlam^k\|^2+\|\tilde{\vlam}^{t+1}-\vlam^t\|^2\right]\cr
& &+\frac{\rho_x}{\rho}\sum_{k=0}^t \EE(\vlam^k-\vlam^{k+1})^\top\vB(\vy^{k+1}-\vy^k)+\frac{(1-\theta)\rho_y}{\rho}\sum_{k=0}^t\EE(\vlam^{k-1}-\vlam^{k})^\top\vA(\vx^{k+1}-\vx^k)\cr
& &-\frac{\theta\rho_y}{2}\EE\left(\|\vx^0-\vx\|_{\vA^\top\vA}^2-\|\vx^{t+1}-\vx\|_{\vA^\top\vA}^2\right)+\frac{(2-\theta)\rho_y}{2}\sum_{k=0}^t\EE\|\vx^{k+1}-\vx^k\|_{\vA^\top\vA}^2\cr
& &-\frac{1}{2}\EE\left(\|\vx^{t+1}-\vx\|_{\hat{\vP}}^2-\|\vx^0-\vx\|_{\hat{\vP}}^2
+\sum_{k=0}^t\|\vx^{k+1}-\vx^k\|_{\hat{\vP}}^2\right)+\frac{L_f}{2}\sum_{k=0}^t\EE\|\vx^k-\vx^{k+1}\|^2\cr
& &-\frac{1}{2}\EE\left(\|\vy^{t+1}-\vy\|_{\hat{\vQ}}^2-\|\vy^0-\vy\|_{\hat{\vQ}}^2
+\sum_{k=0}^t\|\vy^{k+1}-\vy^k\|_{\hat{\vQ}}^2\right)+\frac{L_g}{2}\sum_{k=0}^t\EE\|\vy^k-\vy^{k+1}\|^2,
\end{eqnarray}
where we have used the conditions in \eqref{para-rho}.
By Young's inequality, we have that for $0\le k\le t$,
\begin{eqnarray}\label{bd-lam-y}
& &\frac{\rho_x}{\rho}(\vlam^k-\vlam^{k+1})^\top\vB(\vy^{k+1}-\vy^k)-\frac{\theta}{2\rho}\frac{1}{2-\theta}\|\vlam^{k+1}-\vlam^k\|^2\cr
&\le & \frac{\rho}{\theta}\frac{2-\theta}{2}\frac{\rho_x^2}{\rho^2}\|\vB(\vy^{k+1}-\vy^k)\|^2
\overset{\eqref{para-rho}}= \frac{(2-\theta)\rho_y}{2\theta^2}\|\vy^{k+1}-\vy^k\|_{\vB^\top\vB}^2,
\end{eqnarray}
and for $1\le k\le t$,
\begin{eqnarray}\label{bd-lam-x}
& &\frac{(1-\theta)\rho_y}{\rho}(\vlam^{k-1}-\vlam^{k})^\top\vA(\vx^{k+1}-\vx^k)-\frac{\theta}{2\rho}\frac{1-\theta}{2-\theta}\|\vlam^{k-1}-\vlam^k\|^2\cr
&\le & (1-\theta)\frac{\rho}{\theta}\frac{(2-\theta)\rho_y^2}{2\rho^2}\|\vA(\vx^{k+1}-\vx^k)\|^2\overset{\eqref{para-rho}}=\frac{(1-\theta)(2-\theta)}{2\theta^2}\rho_x\|\vx^{k+1}-\vx^k\|_{\vA^\top\vA}^2.
\end{eqnarray}
Plugging \eqref{bd-lam-y} and \eqref{bd-lam-x} and also noting $\|\tilde{\vlam}^{t+1}-\vlam^t\|^2\ge \|\vlam^{t+1}-\vlam^t\|^2$, we can upper bound the right hand side of \eqref{ineq-k4-w-sub1} by
\begin{eqnarray}
& &-\frac{\theta}{2\rho}\EE\left[\|\tilde{\vlam}^{t+1}-\vlam\|^2-\|\vlam^0-\vlam\|^2\right]-\frac{\theta\rho_y}{2}\EE\left(\|\vx^0-\vx\|_{\vA^\top\vA}^2-\|\vx^{t+1}-\vx\|_{\vA^\top\vA}^2\right)\cr
& &+\left(\frac{(1-\theta)(2-\theta)}{2\theta^2}\rho_x+\frac{(2-\theta)\rho_y}{2}\right)\sum_{k=0}^t \EE\|\vx^{k+1}-\vx^k\|_{\vA^\top\vA}^2+\frac{(2-\theta)\rho_y}{2\theta^2}\sum_{k=0}^t\EE\|\vy^{k+1}-\vy^k\|_{\vB^\top\vB}^2\cr
& &-\frac{1}{2}\EE\left(\|\vx^{t+1}-\vx\|_{\hat{\vP}}^2-\|\vx^0-\vx\|_{\hat{\vP}}^2
+\sum_{k=0}^t\|\vx^{k+1}-\vx^k\|_{\hat{\vP}}^2\right)+\frac{L_f}{2}\sum_{k=0}^t\EE\|\vx^k-\vx^{k+1}\|^2\cr
& &-\frac{1}{2}\EE\left(\|\vy^{t+1}-\vy\|_{\hat{\vQ}}^2-\|\vy^0-\vy\|_{\hat{\vQ}}^2
+\sum_{k=0}^t\|\vy^{k+1}-\vy^k\|_{\hat{\vQ}}^2\right)+\frac{L_g}{2}\sum_{k=0}^t\EE\|\vy^k-\vy^{k+1}\|^2\cr
&\overset{\eqref{para-mat}}\le &\frac{1}{2}\left(\|\vx^0-\vx\|_{\hat{\vP}-\theta\rho_xA^\top A}^2+\|\vy^0-\vy\|_{\hat{\vQ}}^2\right)+\frac{\theta}{2\rho}\EE\|\lambda^0-\lambda\|^2.\label{ineq-k4-w-sub2}
\end{eqnarray}
In addition, note that
\begin{align*}
&\theta\|x^{t+1}-x\|_{A^\top A}^2=\frac{n}{N}\left\|\sum_{i=1}^N A_i(x_i^{t+1}-x_i)\right\|^2\le n\sum_{i=1}^N\|x_i^{t+1}-x_i\|_{A_i^\top A_i}^2\\
&\|x^k-x^{k+1}\|_{A^\top A}^2=\left\|\sum_{i\in I_k}A_i(x_i^k-x_i^{k+1})\right\|^2\le n\sum_{i=1}^N\|x_i^k-x_i^{k+1}\|_{A_i^\top A_i}^2\\
&\|y^k-y^{k+1}\|_{B^\top B}^2=\left\|\sum_{j\in J_k}B_j(y_j^k-y_j^{k+1})\right\|^2\le m\sum_{j=1}^M\|y_j^k-y_j^{k+1}\|_{B_j^\top B_j}.
\end{align*}
Hence, if $\hat{P}$ and $\hat{Q}$ satisfy \eqref{para-mat-ij}, then \eqref{ineq-k4-w-sub2} also holds.

Combining  \eqref{ineq-k3-w}, \eqref{ineq-k4-w-sub1} and \eqref{ineq-k4-w-sub2} yields
\begin{eqnarray}\label{ineq-k5-w}
& &\EE\left[\Phi(\vx^{t+1},\vy^{t+1})-\Phi(\vx,\vy)+(\tilde{\vw}^{t+1}-\vw)^\top H(\tilde{\vw}^{t+1})\right]\cr
& &+\theta\sum_{k=0}^{t-1}\EE\left[\Phi(\vx^{k+1},\vy^{k+1})-\Phi(\vx,\vy)+(\vw^{k+1}-\vw)^\top H(\vw^{k+1})\right]\cr
& \le & (1-\theta)\left[\Phi(\vx^0,\vy^0)-\Phi(\vx,\vy)\right]\cr
& &+(1-\theta)\left[(\vx^{0}-\vx)^\top(-\vA^\top\vlam^0)+
\rho_x(\vx^{0}-\vx)^\top\vA^\top\vr^{0}+(\vy^{0}-\vy)^\top(-\vB^\top\vlam^0)+
\rho_y(\vy^{0}-\vy)^\top\vB^\top\vr^{0}\right]\cr
& &+ \frac{1}{2}\left(\|\vx^0-\vx\|_{\hat{\vP}-\theta\rho_xA^\top A}^2+\|\vy^0-\vy\|_{\hat{\vQ}}^2\right)+\frac{\theta}{2\rho}\EE\|\lambda^0-\lambda\|^2.
\end{eqnarray}
Applying the convexity of $\Phi$ and the properties \eqref{prop-mas-H} of $H$, we have
\begin{eqnarray}\label{ineq-con-sum}
& &(1+\theta t)\EE\left[\Phi(\hat{\vx}^t,\hat{\vy}^t)-\Phi(\vx,\vy)+(\hat{\vw}^{t+1}-\vw)^\top H(\vw)\right]\cr
&\overset{\eqref{equivHw}}= &(1+\theta t)\EE\left[\Phi(\hat{\vx}^t,\hat{\vy}^t)-\Phi(\vx,\vy)+(\hat{\vw}^{t+1}-\vw)^\top H(\hat{\vw}^{t+1})\right]\cr
&\overset{\eqref{prop-mas-H}}\leq&\EE\left[\Phi(\vx^{t+1},\vy^{t+1})-\Phi(\vx,\vy)+(\tilde{\vw}^{t+1}-\vw)^\top H(\tilde{\vw}^{t+1})\right]\cr
& & +\theta\sum_{k=0}^{t-1}\EE\left[\Phi(\vx^{k+1},\vy^{k+1})-\Phi(\vx,\vy)+(\vw^{k+1}-\vw)^\top H(\vw^{k+1})\right].
\end{eqnarray}
Now combining \eqref{ineq-con-sum} and \eqref{ineq-k5-w}, we have
\begin{eqnarray}\label{ineq-opt-w}
& &(1+\theta t)\EE\left[\Phi(\hat{\vx}^t,\hat{\vy}^t)-\Phi(\vx,\vy)+(\hat{\vw}^{t+1}-\vw)^\top H(\vw)\right]\cr
&\le & (1-\theta)\left[\Phi(\vx^0,\vy^0)-\Phi(\vx,\vy)\right]\cr
& &+(1-\theta)\left[(\vx^{0}-\vx)^\top(-\vA^\top\vlam^0)+
\rho_x(\vx^{0}-\vx)^\top\vA^\top\vr^{0}+(\vy^{0}-\vy)^\top(-\vB^\top\vlam^0)+
\rho_y(\vy^{0}-\vy)^\top\vB^\top\vr^{0}\right]\cr
& &+ \frac{1}{2}\left(\|\vx^0-\vx\|_{\hat{\vP}-\theta\rho_xA^\top A}^2+\|\vy^0-\vy\|_{\hat{\vQ}}^2\right)+\frac{\theta}{2\rho}\EE\|\lambda^0-\lambda\|^2.
\end{eqnarray}

By Lemmas \ref{lem:xy-rate} and \ref{equiv-rate} with $\gamma=\max\{1+\|\lambda^*\|, 2\|\lambda^*\|\}$, we have the desired result. 
%

\subsection{Proof of Theorem \ref{thm-s-vx}}
From the nonincreasing monotonicity of $\alpha_k$, one can easily show the following result.
\begin{lemma} Assume $\vlam^{-1}=\vlam^0$. It holds that
\begin{eqnarray}\label{ineq-k3-x-s}
& &\sum_{k=0}^t\frac{(1-\theta)\beta_k}{2}\left[\|\vlam^{k}-\vlam\|^2-\|\vlam^{k-1}-\vlam\|^2+\|\vlam^{k}-\vlam^{k-1}\|^2\right]\notag\\
& &-\sum_{k=0}^{t-1}\frac{\alpha_k\beta_{k+1}}{2\alpha_{k+1}}\left[\|\vlam^{k+1}-\vlam\|^2-\|\vlam^{k}-\vlam\|^2+\|\vlam^{k+1}-\vlam^k\|^2\right]\nonumber\\
&\le&-\sum_{k=0}^{t-1}\frac{\beta_{k+1}}{2}\|\vlam^{k+1}-\vlam^k\|^2
+\sum_{k=1}^t\frac{(1-\theta)\beta_k}{2}\|\vlam^{k}-\vlam^{k-1}\|^2 \nonumber \\
& & -\sum_{k=0}^{t-1}\frac{\alpha_k\beta_{k+1}}{2\alpha_{k+1}}\|\vlam^{k+1}-\vlam\|^2
-\sum_{k=1}^t\frac{(1-\theta)\beta_k}{2}\|\vlam^{k-1}-\vlam\|^2\nonumber\\
& &+\frac{\alpha_0\beta_1}{2\alpha_1}\|\vlam^0-\vlam\|^2+\sum_{k=1}^{t-1}\frac{\alpha_k\beta_{k+1}}{2\alpha_{k+1}}\|\vlam^{k}-\vlam\|^2
+\sum_{k=1}^t\frac{(1-\theta)\beta_k}{2}\|\vlam^{k}-\vlam\|^2\nonumber\\
&=&-\sum_{k=0}^{t-1}\frac{\theta\beta_{k+1}}{2}\|\vlam^{k+1}-\vlam^k\|^2
+\left(\frac{\alpha_0\beta_1}{2\alpha_1}-\frac{(1-\theta)\beta_1}{2}\right)\|\vlam^0-\vlam\|^2
-\left(\frac{\alpha_{t-1}\beta_t}{2\alpha_t}-\frac{(1-\theta)\beta_t}{2}\right)\|\vlam^{t}-\vlam\|^2\nonumber\\
& &-\sum_{k=1}^{t-1}\left(\frac{\alpha_{k-1}\beta_k}{2\alpha_k}
+\frac{(1-\theta)\beta_{k+1}}{2}-\frac{\alpha_k\beta_{k+1}}{2\alpha_{k+1}}-\frac{(1-\theta)\beta_k}{2}\right)\|\vlam^{k}-\vlam\|^2.
\end{eqnarray}
\end{lemma}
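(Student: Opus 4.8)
The plan is to treat this as a bookkeeping identity in which the only genuine inequality comes from the monotonicity of $\alpha_k$, everything else being an exact rearrangement. First I would expand each of the two bracketed sums on the left-hand side into its three constituent pieces, producing six sums: from the first bracket the three pieces carrying the coefficient $\frac{(1-\theta)\beta_k}{2}$, and from the second bracket the three pieces carrying $\frac{\alpha_k\beta_{k+1}}{2\alpha_{k+1}}$ (with an overall minus sign). Before anything else I would record that the convention $\vlam^{-1}=\vlam^0$ annihilates the $k=0$ summand of the first sum, since $\|\vlam^0-\vlam\|^2-\|\vlam^{-1}-\vlam\|^2+\|\vlam^0-\vlam^{-1}\|^2=0$; hence that sum effectively runs from $k=1$ to $t$.

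For the inequality ($\le$) step I would match the six pieces against the seven sums on the intermediate right-hand side. Five of the pieces coincide with intermediate sums term-by-term, once one observes that $\sum_{k=0}^{t-1}\frac{\alpha_k\beta_{k+1}}{2\alpha_{k+1}}\|\vlam^k-\vlam\|^2$ splits as its $k=0$ summand $\frac{\alpha_0\beta_1}{2\alpha_1}\|\vlam^0-\vlam\|^2$ plus the tail over $k=1,\dots,t-1$. The sole discrepancy is the cross term: the exact left-hand side carries $-\sum_{k=0}^{t-1}\frac{\alpha_k\beta_{k+1}}{2\alpha_{k+1}}\|\vlam^{k+1}-\vlam^k\|^2$, whereas the intermediate expression carries $-\sum_{k=0}^{t-1}\frac{\beta_{k+1}}{2}\|\vlam^{k+1}-\vlam^k\|^2$. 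Since $\alpha_k$ is nonincreasing, $\alpha_k/\alpha_{k+1}\ge 1$, and $\beta_{k+1}>0$ (because $1-\frac{\alpha_{k+1}(1-\theta)}{\alpha_k}\ge\theta>0$), we get $\frac{\alpha_k\beta_{k+1}}{2\alpha_{k+1}}\ge\frac{\beta_{k+1}}{2}$, so $-\frac{\alpha_k\beta_{k+1}}{2\alpha_{k+1}}\|\vlam^{k+1}-\vlam^k\|^2\le-\frac{\beta_{k+1}}{2}\|\vlam^{k+1}-\vlam^k\|^2$ for every $k$. Summing these term-wise bounds produces exactly the intermediate upper bound.

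For the equality ($=$) step I would reindex and collect coefficients. Reindexing the first squared-difference sum to run over $\|\vlam^k-\vlam^{k-1}\|^2$ and combining with the second gives the coefficient $\tfrac12\bigl(-1+(1-\theta)\bigr)\beta_k=-\tfrac{\theta\beta_k}{2}$, which after shifting the index back is $-\sum_{k=0}^{t-1}\frac{\theta\beta_{k+1}}{2}\|\vlam^{k+1}-\vlam^k\|^2$. For the terms of the form $\|\vlam^k-\vlam\|^2$, I would bring the four remaining intermediate sums to a common summation variable and read off the coefficient of each $\|\vlam^k-\vlam\|^2$: the endpoints contribute the displayed boundary coefficients $\frac{\alpha_0\beta_1}{2\alpha_1}-\frac{(1-\theta)\beta_1}{2}$ at $k=0$ and $-\bigl(\frac{\alpha_{t-1}\beta_t}{2\alpha_t}-\frac{(1-\theta)\beta_t}{2}\bigr)$ at $k=t$, while for interior $1\le k\le t-1$ all four sums contribute and assemble the stated coefficient. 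The main obstacle is purely clerical rather than conceptual: keeping the summation ranges and index shifts exactly aligned so that the boundary summands at $k=0$ and $k=t$ are isolated correctly and the $\vlam^{-1}=\vlam^0$ convention is invoked precisely once.
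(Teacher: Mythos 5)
Your proof is correct and matches the argument the paper intends: the paper gives no details beyond attributing the lemma to the nonincreasing monotonicity of $\alpha_k$, and your expansion confirms that the only genuine inequality is the term-wise bound $-\frac{\alpha_k\beta_{k+1}}{2\alpha_{k+1}}\|\lambda^{k+1}-\lambda^k\|^2\le-\frac{\beta_{k+1}}{2}\|\lambda^{k+1}-\lambda^k\|^2$ coming from $\alpha_k/\alpha_{k+1}\ge 1$ and $\beta_{k+1}>0$, with everything else (including the vanishing $k=0$ summand under $\lambda^{-1}=\lambda^0$ and the reindexed coefficient collection) an exact identity. The boundary and interior coefficients you read off agree with the displayed final expression, so nothing is missing.
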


By the update formula of $\vlam$ in \eqref{eq:supdate-lam}, we have from \eqref{ineq-k0-x-s} that
\begin{eqnarray}\label{ineq-k1-x-s}
& &\EE\left[F(\vx^{k+1})-F(\vx)+(\vx^{k+1}-\vx)^\top(-\vA^\top\vlam^{k+1})+(\vlam^{k+1}-\vlam)^\top \vr^{k+1}\right]\cr
& &+\EE\left[\frac{(\vlam^{k+1}-\vlam)^\top(\vlam^{k+1}-\vlam^{k})}{\left(1-\frac{(1-\theta)\alpha_{k+1}}{\alpha_k }\right)\rho}
+\frac{(1-\theta)\alpha_{k+1}}{\alpha_k}\rho(\vx^{k+1}-\vx)^\top\vA^\top\vr^{k+1}\right]\cr
& &+\EE(\vx^{k+1}-\vx)^\top\left(\tilde{\vP}+\frac{\vI}{\alpha_k}\right)(\vx^{k+1}-\vx^k)
-\frac{L_f}{2}\EE\|\vx^k-\vx^{k+1}\|^2+\EE(\vx^{k+1}-\vx^k)^\top\vdelta^k\cr
&\le & (1-\theta)\EE\left[F(\vx^k)-F(\vx)+(\vx^{k}-\vx)^\top(-\vA^\top\vlam^k)+(\vlam^{k}-\vlam)^\top \vr^{k}+\frac{(\vlam^{k}-\vlam)^\top(\vlam^k-\vlam^{k-1})}{\left(1-\frac{(1-\theta)\alpha_{k}}{\alpha_{k-1}}\right)\rho}
\right]\cr
& &+(1-\theta)\rho\EE(\vx^{k}-\vx)^\top\vA^\top\vr^{k},
\end{eqnarray}
where similar to  \eqref{matPQtilde}, we have defined $\tilde{P}=\hat{P}-\rho A^\top A$.

Multiplying $\alpha_k$ to both sides of \eqref{ineq-k1-x-s} and using \eqref{w-H-s} and \eqref{uv-cross}, we have
\begin{eqnarray}\label{ineq-k2-x-s}
& &\alpha_k\EE\left[F(\vx^{k+1})-F(\vx)+(\vw^{k+1}-\vw)^\top H(\vw^{k+1})\right]\cr
& &+\frac{\alpha_k\beta_{k+1}}{2\alpha_{k+1}}\EE\left[\|\vlam^{k+1}-\vlam\|^2-\|\vlam^{k}-\vlam\|^2+\|\vlam^{k+1}-\vlam^k\|^2\right]
+\EE\left[(1-\theta)\alpha_{k+1}\rho(\vx^{k+1}-\vx)^\top\vA^\top\vr^{k+1}\right]\cr
& &+\frac{\alpha_k}{2}\EE\big[\|\vx^{k+1}-\vx\|_{\tilde{\vP}}^2-\|\vx^{k}-\vx\|_{\tilde{\vP}}^2+\|\vx^{k+1}-\vx^k\|_{\tilde{\vP}}^2\big]
+\frac{1}{2}\EE\left[\|\vx^{k+1}-\vx\|^2-\|\vx^{k}-\vx\|^2+\|\vx^{k+1}-\vx^k\|^2\right]\cr
& &-\frac{\alpha_kL_f}{2}\EE\|\vx^k-\vx^{k+1}\|^2+\alpha_k\EE(\vx^{k+1}-\vx^k)^\top\vdelta^k\cr
&\le & (1-\theta)\alpha_k\EE\left[F(\vx^k)-F(\vx)+(\vw^{k}-\vw)^\top H(\vw^k)\right]\cr
& &+\frac{(1-\theta)\beta_k}{2}\EE\left[\|\vlam^{k}-\vlam\|^2-\|\vlam^{k-1}-\vlam\|^2+\|\vlam^{k}-\vlam^{k-1}\|^2\right]+\alpha_k(1-\theta)\rho\EE(\vx^{k}-\vx)^\top\vA^\top\vr^{k}.
\end{eqnarray}

Denote  $\tilde{\vlam}^{t+1}=\vlam^t-\rho\vr^{t+1}.$
Then for $k=t$, it is easy to see that \eqref{ineq-k2-x-s} becomes
\begin{eqnarray}\label{ineq-k2-x-s-t}
& &\alpha_t\EE\left[F(\vx^{t+1})-F(\vx)+(\tilde{\vw}^{t+1}-\vw)^\top H(\tilde{\vw}^{t+1})\right]\cr
& &+\frac{\alpha_t}{2\rho}\EE\left[\|\tilde{\vlam}^{t+1}-\vlam\|^2-\|\vlam^{t}-\vlam\|^2+\|\tilde{\vlam}^{t+1}-\vlam^t\|^2\right]\cr
& &+\frac{\alpha_t}{2}\EE\left[\|\vx^{t+1}-\vx\|_{\tilde{\vP}}^2-\|\vx^{t}-\vx\|_{\tilde{\vP}}^2+\|\vx^{t+1}-\vx^t\|_{\tilde{\vP}}^2\right]
+\frac{1}{2}\EE\left[\|\vx^{t+1}-\vx\|^2-\|\vx^{t}-\vx\|^2+\|\vx^{t+1}-\vx^t\|^2\right]\cr
& &-\frac{\alpha_tL_f}{2}\EE\|\vx^t-\vx^{t+1}\|^2+\alpha_t\EE(\vx^{t+1}-\vx^t)^\top\vdelta^t\cr
&\le & (1-\theta)\alpha_t\EE\left[F(\vx^t)-F(\vx)+(\vw^{t}-\vw)^\top H(\vw^t)\right]\cr
& &+\frac{(1-\theta)\beta_t}{2}\EE\left[\|\vlam^{t}-\vlam\|^2-\|\vlam^{t-1}-\vlam\|^2+\|\vlam^{t}-\vlam^{t-1}\|^2\right]+\alpha_t(1-\theta)\EE\rho(\vx^{t}-\vx)^\top\vA^\top\vr^{t}.
\end{eqnarray}

By the nonincreasing monotonicity of $\alpha_k$, summing  \eqref{ineq-k2-x-s} from $k=0$ through $t-1$ and \eqref{ineq-k2-x-s-t} and plugging \eqref{ineq-k3-x-s} gives
\begin{eqnarray}\label{ineq-k4-x-s}
& &\alpha_t\EE\left[F(\vx^{t+1})-F(\vx)+(\tilde{\vw}^{t+1}-\vw)^\top H(\tilde{\vw}^{t+1})\right]+\theta\alpha_{k+1}\sum_{k=0}^{t-1}\EE\left[F(\vx^{k+1})-F(\vx)+(\vw^{k+1}-\vw)^\top H(\vw^{k+1})\right]\nonumber\\
& &+\frac{\alpha_t}{2\rho}\EE\left[\|\tilde{\vlam}^{t+1}-\vlam\|^2-\|\vlam^{t}-\vlam\|^2+\|\tilde{\vlam}^{t+1}-\vlam^t\|^2\right]\nonumber\\
& &+\frac{\alpha_{t+1}}{2}\EE\|\vx^{t+1}-\vx\|_{\tilde{\vP}}^2
+\sum_{k=0}^t\frac{\alpha_k}{2}\EE\|\vx^{k+1}-\vx^k\|_{\tilde{\vP}}^2+\frac{1}{2}\EE\big[\|\vx^{t+1}-\vx\|^2-\|\vx^{0}-\vx\|^2+\sum_{k=0}^t\|\vx^{k+1}-\vx^k\|^2\big]\nonumber\\
& &-\sum_{k=0}^t\frac{\alpha_kL_f}{2}\EE\|\vx^k-\vx^{k+1}\|^2+\sum_{k=0}^t\alpha_k\EE(\vx^{k+1}-\vx^k)^\top\vdelta^k\nonumber\\
& \le & (1-\theta)\alpha_0\EE\left[F(\vx^0)-F(\vx)+(\vw^{0}-\vw)^\top H(\vw^0)\right]+\alpha_0(1-\theta)\rho(\vx^{0}-\vx)^\top\vA^\top\vr^{0}+\frac{\alpha_0}{2}\|\vx^{0}-\vx\|_{\tilde{\vP}}^2\nonumber\\
& &-\sum_{k=0}^{t-1}\frac{\theta\beta_{k+1}}{2}\EE\|\vlam^{k+1}-\vlam^k\|^2
+\left(\frac{\alpha_0\beta_1}{2\alpha_1}-\frac{(1-\theta)\beta_1}{2}\right)\EE\|\vlam^0-\vlam\|^2
-\left(\frac{\alpha_{t-1}\beta_t}{2\alpha_t}-\frac{(1-\theta)\beta_t}{2}\right)\EE\|\vlam^{t}-\vlam\|^2\nonumber\\
& &-\sum_{k=1}^{t-1}\left(\frac{\alpha_{k-1}\beta_k}{2\alpha_k}+\frac{(1-\theta)\beta_{k+1}}{2}
-\frac{\alpha_k\beta_{k+1}}{2\alpha_{k+1}}-\frac{(1-\theta)\beta_k}{2}\right)\EE\|\vlam^{k}-\vlam\|^2 .
\end{eqnarray}
From \eqref{bd-x-s2}, we have
$$
\frac{\alpha_t}{2\rho}\left[\|\tilde{\vlam}^{t+1}-\vlam\|^2-\|\vlam^{t}-\vlam\|^2
+\|\tilde{\vlam}^{t+1}-\vlam^t\|^2\right]\ge-\left(\frac{\alpha_{t-1}\beta_t}{2\alpha_t}-\frac{(1-\theta)\beta_t}{2}\right)\|\vlam^{t}-\vlam\|^2.
$$
In addition, from Young's inequality, it holds that
$$
\frac{1}{2}\|\vx^{k+1}-\vx^k\|^2+\alpha_k\EE(\vx^{k+1}-\vx^k)^\top\vdelta^k\ge\frac{\alpha^2_k}{2}\|\vdelta\|^2.
$$
Hence, dropping negative terms on the right hand side of \eqref{ineq-k4-x-s}, from the convexity of $\Phi$ and \eqref{prop-mas-H}, we have
\begin{eqnarray}\label{ineq-k4-x-f}
& &\left(\alpha_{t+1}+\theta\sum\limits_{k=1}^t\alpha_k\right)\EE\left[F(\hat{\vx}^{t})-F(\vx)+(\hat{\vw}^{t}-\vw)^\top H(\hat{\vw}^{t})\right]\cr
& &\alpha_t\EE\left[F(\vx^{t+1})-F(\vx)+(\tilde{\vw}^{t+1}-\vw)^\top H(\tilde{\vw}^{t+1})\right]+\theta\alpha_{k+1}\sum_{k=0}^{t-1}\EE\left[F(\vx^{k+1})-F(\vx)+(\vw^{k+1}-\vw)^\top H(\vw^{k+1})\right]\cr
&\le & (1-\theta)\alpha_0\left[F(\vx^0)-F(\vx)+(\vw^{0}-\vw)^\top H(\vw^0)\right]
+(1-\theta)\alpha_0\rho(\vx^{0}-\vx)^\top\vA^\top\vr^{0}+\frac{\alpha_0}{2}\|\vx^{0}-\vx\|_{\tilde{\vP}}^2+\frac{1}{2}\|\vx^0-\vx\|^2\cr
& &+\left(\frac{\alpha_0\beta_1}{2\alpha_1}-\frac{(1-\theta)\beta_1}{2}\right)\EE\|\vlam^0-\vlam\|^2+\sum_{k=0}^t\frac{\alpha_k^2}{2}\EE\|\vdelta^k\|^2.
\end{eqnarray}
Using Lemma \ref{lem:xy-rate} and the properties of $H$, we derive the desired result.

\subsection{Proof of Proposition \ref{prop-equiv-pd}}
Let $(I+\partial\phi)^{-1}(x):=\argmin_z \phi(z)+\frac{1}{2}\|z-x\|_2^2$ denote the proximal mapping of $\phi$ at $x$.
Then the update in \eqref{alg:r1st-pd-z} can be written to
$$z^{k+1}=\left(I+\partial\left(\frac{g^*}{\eta}\right)\right)^{-1}\left(z^k-\frac{1}{\eta}Ax^{k+1}\right).$$
Define $y^{k+1}$ as that in \eqref{equiv-y}. Then
\begin{eqnarray*}
\frac{1}{\eta}y^{k+1}&=&\frac{1}{\eta}\left\{\argmin_y g(y)-\langle y, z^k\rangle+\frac{1}{2\eta}\| y+ A x^{k+1}\|^2\right\}\\
&=&\frac{1}{\eta}\left\{\argmin_y g(y)+\frac{\eta}{2}\|\frac{1}{\eta} y-(z^k-\frac{1}{\eta} A x^{k+1})\|^2\right\}\\
&=&\argmin_y g(\eta y)+\frac{\eta}{2}\|y-(z^k-\frac{1}{\eta} A x^{k+1})\|^2\\
&=&\left(I+\partial\left(\frac{1}{\eta} g(\eta\cdot)\right)\right)^{-1}\left(z^k-\frac{1}{\eta} A x^{k+1}\right).
\end{eqnarray*}
Hence, using the fact that the conjugate of $\frac{1}{\eta}g^*$ is $\frac{1}{\eta} g(\eta\cdot)$ and the Moreau's identity $(I+\partial\phi)^{-1}+(I+\partial\phi^*)^{-1}=I$ for any convex function $\phi$, we have
$$\left(I+\partial\left(\frac{g^*}{\eta}\right)\right)^{-1}\left(z^k-\frac{1}{\eta}Ax^{k+1}\right)+\left(I+\partial\left(\frac{1}{\eta} g(\eta\cdot)\right)\right)^{-1}\left(z^k-\frac{1}{\eta} A x^{k+1}\right)=z^k-\frac{1}{\eta} A x^{k+1}.$$
Therefore, \eqref{equiv-z} holds,
and thus from \eqref{alg:r1st-pd-zbar} it follows
$$\bar{z}^{k+1}=z^{k+1}-\frac{q}{\eta}(Ax^{k+1}+y^{k+1}).$$
Substituting the formula of $\bar{z}^k$ into \eqref{alg:r1st-pd-x}, we have for $i=i_k$,
\begin{align*}
x_i^{k+1}=&\argmin_{x_i\in \cX_i}\langle -\bar{z}^k, A_ix_i\rangle +u_i(x_i)+\frac{\tau}{2}\|x_i-x_i^k\|^2\cr
=& \argmin_{x_i\in \cX_i}\langle -z^k, A_i x_i\rangle+\frac{q}{\eta}\langle A x^k+y^{k}, A_i x_i\rangle+u_i(x_i)+\frac{\tau}{2}\|x_i-x_i^k\|^2\cr
=& \argmin_{x_i\in \cX_i}\langle -z^k, A_i x_i\rangle +u_i(x_i)+\frac{q}{2\eta}\|A_i x_i +A_{\neq i} x_{\neq i}^k+y^k\|^2+\frac{1}{2}\|x_i-x_i^k\|_{\tau I-\frac{q}{\eta}A_i^\top A_i},
\end{align*}
which is exactly \eqref{equiv-x}. Hence, we complete the proof.
}
\end{document}